\newtheorem{theorem}{Theorem}[section]
\newtheorem{lemma}[theorem]{Lemma}
\newtheorem{proposition}[theorem]{Proposition}
\newtheorem{corollary}[theorem]{Corollary}
\newtheorem{definition}[theorem]{Definition}
\newtheorem{example}[theorem]{Example}
\numberwithin{equation}{section}
\DeclareMathOperator{\rad}{rad}
\DeclareMathOperator{\Ob}{Ob}
\DeclareMathOperator{\Mor}{Mor}
\DeclareMathOperator{\Hom}{Hom}
\DeclareMathOperator{\End}{End}
\DeclareMathOperator{\Top}{Top}
\DeclareMathOperator{\Soc}{Soc}
\DeclareMathOperator{\Stab}{Stab}
\DeclareMathOperator{\Char}{char}
\title{On the representation types of category algebras of finite EI categories}
\author{Liping Li}
\address{School of Mathematics, University of Minnesota, Minneapolis, MN, 55455, USA}
\email{lixxx480@math.umn.edu}
\thanks{The author would like to thank his thesis advisor, Professor Peter Webb, for the proposal to study this problem, and for many invaluable suggestions in numerous discussions. He wants to express appreciation to Professor Edward Green, Professor Markus Schmidmeier and Dr. Ryan Kinser, from whom the author learned many useful techniques applied in this paper. He also thanks the referee whose suggestions make this paper much more concise.}
\begin{document}

\begin{abstract}
A finite EI category is a small category with finitely many morphisms such that every endomorphism is an isomorphism. They include finite groups, finite posets and free categories of finite quivers as special cases. In this paper we consider the representation types of finite EI categories with two or three objects, describe some criteria for finite representation type, and use them to classify the representation types of the category algebras of several classes of finite EI categories with extra properties.
\end{abstract}
\keywords{finite EI categories, category algebras, representation types.}
\subjclass[2000]{16G10, 16G20, 16G60.}
\maketitle

\section{Introduction}

\textit{Finite EI categories} are small categories with finitely many morphisms such that every endomorphism is an isomorphism. They are studied in different areas as topology (\cite{Dieck, Luck}), representation theory (\cite{Dietrich1, Dietrich2, Li1, Li2, Webb2, Webb3, Xu2}), homological algebra (\cite{Webb1, Xu1, Xu3}), and fusion systems (\cite{AKO, Linckelmann}). Interesting examples include finite groups (categories with one object), finite posets, and free categories of finite quivers.

For a fixed algebraically closed field $k$, a finite EI category $\mathcal{C}$ determines a finite-dimensional $k$-algebra $k\mathcal{C}$ called the \textit{category algebra}. It is said to be of \textit{finite representation type} if $k\mathcal{C}$-mod has only finitely many indecomposable objects up to isomorphism. Otherwise, it is said to be of \textit{infinite representation type}. In this paper we mainly study the representation types of category algebras $k\mathcal{C}$, where $\mathcal{C}$ has only two or three objects. This problem has been considered in Dietrich's thesis (\cite{Dietrich1, Dietrich2}) and some useful criteria of infinite representation type have been acquired.

Let $\mathcal{C}$ be a connected skeletal finite EI category with two objects. In the case that the automorphism groups of both objects have orders invertible in $k$, we can construct the ordinary quiver $Q$ (which is an acyclic finite quiver) of $k\mathcal{C}$ using the algorithm described in \cite{Li1}, and $k\mathcal{C}$ is Morita equivalent to the hereditary algebra $kQ$. Therefore, its representation type can be determined by Gabriel's theorem. For the situation that the automorphism groups of both objects are $p$-groups, where $p = \Char(k)$, the classification has been obtained in \cite{Dietrich2} by applying Bongartz's list described in \cite{Bautista, Bongartz}.

When both automorphism groups in $\mathcal{C}$ are arbitrary, the situation is much more complicated. We introduce some notation here. Let $\mathcal{C}$ be pictured as below, where $G = \mathcal{C} (x, x)$ and $H = \mathcal{C} (y, y)$. Using some reduction we can show that if $k\mathcal{C}$ is of finite representation type, then $\mathcal{C} (x, y)$ has only one orbit as an $(H, G)$-biset. Thus we let $\alpha$ be a fixed morphism in $\mathcal{C} (x, y)$. Define $G_0 = \Stab _G (\alpha)$, $G_1 = \Stab _G (H \alpha)$, $H_0 = \Stab _H (\alpha)$, and $H_1 = \Stab _H (\alpha G)$. Then $G_0 \lhd G_1 \leqslant G$ and $H_0 \lhd G_1 \leqslant H$. Moreover, $G_1 / G_0 \cong H_1 / H_0$ as groups.
\begin{equation*}
\xymatrix{ \mathcal{C}: & x \ar@(ld,lu)[]|{G} \ar @<1ex>[rr] ^{H\alpha G} \ar@<-1ex>[rr] ^{\ldots} & & y \ar@(rd,ru)[]|{H}}.
\end{equation*}

We characterize representations of $\mathcal{C}$ explicitly by linear maps satisfying some property related to representations of the above groups. Therefore, many techniques from group representation theory (for example, induction and restriction, permutation modules, block theory, Brauer graphs, and etc.) can be applied to exploit the representation type of $k\mathcal{C}$. We get the following criteria:

\begin{theorem}
Let $\mathcal{C}$ be as above and $p = \Char (k) \geqslant 0$. If $k\mathcal{C}$ is of finite representation type, then the following conditions must be satisfied:
\begin{enumerate}
\item Both $G$ and $H$ have cyclic Sylow $p$-subgroups.
\item Either $G$ or $H$ acts transitively on $\mathcal{C} (x, y)$, i.e., $\mathcal{C} (x, y)$ equals $H \alpha$ or $\alpha G$.
\item If $p \geqslant 5$, then either $O^{p'} G$ (the subgroup generated by all Sylow $p$-subgroups in $G$) or $O^{p'} H$ acts trivially on $\mathcal{C} (x, y)$. Moreover, for every $p$-subgroup $P \leqslant G$ (resp., $Q \leqslant H$), $P \cap G_0$ is either 1 or $P$ (resp., $Q \cap H_0$ is either 1 or $Q$).
\end{enumerate}
\end{theorem}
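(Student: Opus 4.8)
The plan is to treat each of the four conditions as a separate necessary-condition argument, using throughout the explicit model of a representation of $\mathcal{C}$ (with one biset orbit) as a triple $(V, W, f)$, where $V$ is a $kG$-module, $W$ a $kH$-module, and $f \colon V \to W$ a $k$-linear map compatible with the biset, i.e. $h f g = f$ for every pair $(h,g)$ with $h \alpha g = \alpha$. Equivalently one regards $k\mathcal{C}$ as the triangular algebra attached to the $(kH, kG)$-bimodule $B = k\mathcal{C}(x,y)$, so that a module is a triple $(V, W, \phi \colon B \otimes_{kG} V \to W)$. I will freely use the two reductions from the introduction: finite representation type passes to every full subcategory, and in particular to the one-object subcategories at $x$ and $y$, whose algebras are $kG$ and $kH$. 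Condition (1) is then immediate, since if $\mathcal{C}$ has finite representation type so do $kG$ and $kH$, and by Higman's theorem a group algebra in characteristic $p$ has finite representation type precisely when its Sylow $p$-subgroups are cyclic.

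For (2) I would argue contrapositively. If neither $G$ nor $H$ acts transitively on $\mathcal{C}(x,y)$ --- equivalently $B$ is generated by $\alpha$ neither as a left $kH$-module nor as a right $kG$-module --- then the orbit structure of $B$ contains enough independent generators to embed a Kronecker-type subproblem. Concretely, I would choose suitable simple constituents of $V$ and $W$ and let $f$ range over a family of maps $V \to W$ forced to remain pairwise non-isomorphic by the biset relations, exhibiting either an explicit one-parameter family of indecomposables or a full sub-biset whose associated algebra is the Kronecker algebra; either way $\mathcal{C}$ has infinite representation type. The bookkeeping here reduces to computing $B \otimes_{kG} (-)$ on the relevant modules via the stabilizer data $G_0 \lhd G_1$ and $H_0 \lhd H_1$ and the Goursat isomorphism $G_1/G_0 \cong H_1/H_0$.

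Conditions (3) and (4) are the substantial part, and I expect them to be the main obstacle. Here the ambient group algebras are no longer semisimple, so Gabriel's theorem is unavailable and one must work $p$-modularly. My plan is to restrict attention to $p$-subgroups $P \leqslant G$ and $Q \leqslant H$ via induction and restriction, and to analyze the permutation modules $k[H\alpha]$, $k[\alpha G]$ and $k[\mathcal{C}(x,y)]$ together with the block decompositions of $kG$ and $kH$. Because the Sylow $p$-subgroups are cyclic by (1), each relevant block is a Brauer-tree algebra, and the biset action dictates how the corresponding modules are glued across $x$ and $y$. The conditions $P \cap G_0 \in \{1, P\}$ and the triviality of $O^{p'}G$ or $O^{p'}H$ should emerge by showing that any violation produces, after restriction to an appropriate $p$-subgroup and passage to a block, a configuration matching an infinite-type entry of Bongartz's list.

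The numerical thresholds $p \geqslant 5$ and $p \geqslant 17$ I expect to appear exactly as the points past which the finitely many tame or borderline configurations --- indexed by the size of the cyclic Sylow subgroup and the number of edges of the relevant Brauer tree --- are forced out. Thus the genuinely hard work is the careful case analysis that rules these out, and that verifies that the surviving glued configurations are precisely the ones described by (3) and (4); the delicate interplay between the block theory of $G$ and $H$ and the bimodule structure of $B$ is where I anticipate the most effort.
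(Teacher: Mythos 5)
Your framework is sound as far as it goes: the triple description $(V,W,f)$ with the compatibility condition $hfg=f$ whenever $h\alpha g=\alpha$ is equivalent to the paper's Proposition 6.1, the triangular-matrix view of $k\mathcal{C}$ with bimodule $B=k\mathcal{C}(x,y)$ is correct, and your proof of (1) (full subcategory reduction plus Higman) is exactly the paper's. The genuine gap is that for (2)--(4) you defer precisely the constructions that constitute the proof, and the two concrete mechanisms you do name would fail. For (2): by the standing assumption $\mathcal{C}(x,y)$ is a \emph{single} $(H,G)$-orbit, so any ``sub-biset'' realizing a Kronecker quiver sits inside a non-full subcategory, and infinite representation type of a non-full subcategory does not transfer to $\mathcal{C}$ --- the paper's Example 3.1 is exactly a finite-type category containing the Kronecker quiver as a non-full subcategory. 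Transfer along a non-full $\mathcal{D}$ requires a bimodule-summand condition such as $k\mathcal{D}\mid({}_{k\mathcal{D}}k\mathcal{C}_{k\mathcal{D}})$, which must be verified by explicit coset decompositions (Lemmas 6.3 and 6.5); the same objection applies to your plan in (3) of ``restriction to an appropriate $p$-subgroup,'' which the paper replaces by a careful descent through quotient categories (always safe, Proposition 3.5) and bimodule-summand subcategories down to the regular-biset $p$-group case, where Proposition 5.1 applies --- this is where $p\geqslant 5$ enters, since $p=2,3$ admit exceptional finite-type configurations. What the paper actually does for (2) is your other branch made precise: pass to the quotient category in which $\alpha$ has stabilizers $G_1,H_1$, then build one-parameter families $R_d$ supported on trivial summands in tops and socles of $k\uparrow_{G_1}^{G}$ and $k\uparrow_{H_1}^{H}$; locating two such summands when the permutation module is indecomposable requires Scott-module theory and the double-coset count $\dim_k\End_{kG}(k\uparrow_{G_1}^{G})=|G_1\backslash G/G_1|\geqslant 2$, none of which appears in your sketch.

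For the second half of (3) and for (4), your toolkit (Brauer trees for cyclic Sylow subgroups, Bongartz's list) is the right one but is not sufficient as described. The paper works with the algebra $\Lambda=\End_{k\mathcal{C}}(P_k\oplus k\mathcal{C}1_x)^{\textnormal{op}}$, a one-arrow-plus-loop bound quiver compared against Bongartz's list (Lemma 6.11), combined with estimates on the exceptional multiplicity of the Brauer graph (e.g.\ $em=|D|-1$ with $e\mid(p-1)$ forces $m\geqslant p+1$, hence $\dim_k\End_{kH}(P_k)\geqslant p+2\geqslant 7$). Crucially, the case in (4) where the Brauer star has multiplicity $m\leqslant 2$ and $e\geqslant 8$ edges is \emph{not} settled by any list lookup: there the paper constructs four pairwise Hom-orthogonal uniserial $kH$-modules around the star and transports the infinitely many indecomposables of the four-subspace quiver into $k\mathcal{C}$-mod. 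That construction, not a borderline-configuration count, is the true source of the threshold $p\geqslant 17$ (via $em=|P|-1\geqslant 16$ forcing $m\geqslant 3$ or $e\geqslant 8$). Your intuition about where the thresholds live is right in spirit, but as written the proposal establishes only condition (1).
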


Consequently, in many cases the representation type of $k\mathcal{C}$ can be determined from two pieces of information: the transitivity of actions by $G$ and $H$, and the triviality of actions by Sylow $p$-subgroups in $G$ and $H$. Indeed, the combination of conditions (a-c)
\begin{enumerate}[ \indent (a)]
\item Both $G$ and $H$ act transitively.
\item One of $G$ and $H$ acts transitively, and the other one does not.
\item Neither $G$ nor $H$ acts transitively.
\end{enumerate}
and conditions (1-3)
\begin{enumerate}
\item Both $O^{p'}G$ and $O^{p'}H$ act trivially.
\item One of $O^{p'}G$ and $O^{p'}H$ acts trivially, and the other one does not.
\item Neither $O^{p'}G$ nor $O^{p'}H$ acts trivially.
\end{enumerate}
gives us 8 situations (the combination (a)+(2) cannot happen). By Theorem 1.1, if $\Char(k) = p \neq 2, 3$, we get infinite representation type of $k\mathcal{C}$ for five cases: (c)+(1), (c)+(2), (c)+(3), (a)+(3), (b)+(3). The following theorem tells us the finite representation type for case (a)+(1).

\begin{theorem}
Let $\mathcal{C}$ be as in the previous theorem, and suppose that both $G$ and $H$ act transitively on $\mathcal{C} (x, y)$. If $\Char(k) = p \neq 2, 3$, then $k\mathcal{C}$ is of finite representation type if and only if both $O^{p'} G$ and $O^{p'} H$ act trivially on $\mathcal{C} (x, y)$.
\end{theorem}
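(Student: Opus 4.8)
The plan is to prove the two implications separately, after first using the transitivity hypothesis to put the biset into a rigid form. Since both $G$ and $H$ act transitively we have $H\alpha=\mathcal{C}(x,y)=\alpha G$, so $G_1=\Stab_G(H\alpha)=G$ and $H_1=H$; hence $G_0\lhd G$, $H_0\lhd H$, and the given isomorphism $G_1/G_0\cong H_1/H_0$ becomes $\bar G:=G/G_0\cong H/H_0=:\bar H$. The $G$-set $\mathcal{C}(x,y)\cong G_0\backslash G$ is then regular for $\bar G$ with kernel $G_0$, so that $O^{p'}G$ acts trivially iff $O^{p'}G\leq G_0$ iff $\bar G$ is a $p'$-group, and symmetrically for $H$. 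Because $\bar G\cong \bar H$, the conditions ``$O^{p'}G$ acts trivially'' and ``$O^{p'}H$ acts trivially'' are equivalent. For the forward implication, if $\mathcal{C}$ is representation finite and $p\geq 5$, part (3) of the previous theorem forces at least one of them to act trivially, whence both do, which is condition (1). For $p=0$ the corner algebras $kG,kH$ are semisimple and, by Clifford theory, for each simple $kG$-module $S$ the $G_0$-coinvariants $S_{G_0}$ are either zero or a single $\bar G$-simple; transporting along $\theta$ shows the ordinary quiver of $k\mathcal{C}$ is a disjoint union of copies of $A_2$ together with isolated vertices, so $\mathcal{C}$ is always representation finite and the equivalence holds vacuously.

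For the reverse implication I assume condition (1), so $Q:=\bar G\cong\bar H$ is a $p'$-group. Since representation-finiteness is inherited by the full subcategories on $\{x\}$ and $\{y\}$, whose category algebras are $kG$ and $kH$, I use the necessary condition of part (1) of the previous theorem that $G$ and $H$ have cyclic Sylow $p$-subgroups (without it $k\mathcal{C}$ is already of infinite type, so this is the regime in which the equivalence has content); consequently $kG,kH$ are representation finite with blocks of cyclic defect. As $G_0\lhd G$ has $p'$-index $|Q|$, Schur--Zassenhaus gives $G=G_0\rtimes Q$, and likewise $H=H_0\rtimes Q$, the two copies of $Q$ matched by $\theta$. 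Using the description of representations as triples $(V,W,\phi)$ with $V\in kG\text{-mod}$, $W\in kH\text{-mod}$ and $\phi\colon M\otimes_{kG}V\to W$ a $kH$-map, I compute $M\otimes_{kG}V\cong V_{G_0}$, the $G_0$-coinvariants, on which the residual $\bar G$-action is carried to $H$ through $\theta$. The decisive point is that, $Q$ being a $p'$-group, this interface $V_{G_0}$ is a \emph{semisimple} $kQ$-module inflated to a semisimple $kH$-module, so $\phi$ is a map out of a semisimple module.

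The heart of the argument is to show the triangular matrix algebra $k\mathcal{C}=\left(\begin{smallmatrix} kG & 0\\ M & kH\end{smallmatrix}\right)$ is representation finite, and here I would exploit the splitting. I expect $k\mathcal{C}$ to be identifiable with the skew group algebra $k\mathcal{C}_0\ast Q$, where $\mathcal{C}_0$ is the ``core'' EI category with automorphism groups $G_0,H_0$, a single morphism $\alpha$ fixed by both (hence trivial actions), and $Q$ acting through the semidirect decompositions; the dimension count $\dim k\mathcal{C}=|Q|\bigl(|G_0|+|H_0|+1\bigr)=\dim(k\mathcal{C}_0\ast Q)$ supports this, and it is the algebraic shadow of $G=G_0\rtimes Q$. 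Since $|Q|$ is invertible in $k$, representation-finiteness passes between $k\mathcal{C}_0$ and $k\mathcal{C}_0\ast Q$, so it suffices to treat $\mathcal{C}_0$. For $\mathcal{C}_0$ the interface collapses onto the trivial simples: $\phi$ becomes a single linear map $V_{G_0}\to W^{H_0}$ between the trivial-isotypic head of $V$ and the trivial-isotypic socle of $W$. I would then classify indecomposables from the cyclic-defect (Brauer-tree, hence special biserial) description of the indecomposable $kG_0$- and $kH_0$-modules: only finitely many have nonzero trivial head (resp. socle), these multiplicity spaces are one-dimensional, and gluing via a rank-$\leq 1$ map yields only finitely many new indecomposables with no one-parameter families; equivalently one checks that $k\mathcal{C}_0$ is a string algebra whose only cycles lie inside the representation-finite corners, so it has no bands.

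The step I expect to be the main obstacle is exactly this last one: controlling the indecomposables of the glued algebra $k\mathcal{C}_0$, that is, proving that attaching the connecting bimodule at the trivial-simple interface of two representation-finite cyclic-defect algebras creates neither tame nor wild behaviour. This is a genuine subspace problem whose finiteness must be read off from the explicit walk description of modules over Brauer-tree algebras (in particular the multiplicity-freeness of the relevant heads and socles), together with a verification that the connecting arrow respects the special-biserial constraints. A secondary point needing care is the identification $k\mathcal{C}\cong k\mathcal{C}_0\ast Q$ and its compatibility with the biset structure, although the matching dimensions and the Schur--Zassenhaus splitting constrain it tightly. Finally, the hypothesis $p\neq 2,3$ enters chiefly through part (3) of the previous theorem in the forward implication; the reverse implication should go through at every $p$ for which the corner algebras are representation finite.
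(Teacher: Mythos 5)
Your forward implication is fine and matches the paper's (the paper cites Proposition 6.10 together with the remark that in the doubly transitive case $G_1=G$, $H_1=H$ and $G/G_0\cong H/H_0$, so one group has a nontrivially acting $p$-subgroup if and only if the other does), and your $p=0$ observation about the ordinary quiver being a union of $A_2$'s and isolated vertices is a correct, if unnecessary, alternative for that subcase. The reverse implication, however, has a genuine gap at its foundation: you invoke Schur--Zassenhaus to split $G=G_0\rtimes Q$, but the hypothesis only gives that $Q\cong G/G_0$ is a $p'$-group; it gives no coprimality between $|G_0|$ and $|G:G_0|$. Concretely, take $G=H=C_4$, $G_0=H_0=C_2$, $p\geqslant 5$, with both groups acting transitively on a two-element biset: here $O^{p'}G=1$ acts trivially, the theorem asserts finite representation type, but $C_4$ is not a semidirect product $C_2\rtimes C_2$, so your splitting and with it the proposed identification $k\mathcal{C}\cong k\mathcal{C}_0\ast Q$ fail. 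The matching dimension count is not evidence of an algebra isomorphism, and without the splitting there is no action of $Q$ on $\mathcal{C}_0$ to form a skew group algebra. On top of this, even granting the reduction to $\mathcal{C}_0$, you explicitly leave the finiteness of the glued algebra (your ``main obstacle'') unproven, so the reverse direction is incomplete twice over.

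The paper's route repairs exactly these two holes, and the repair is worth internalizing because it avoids any group-theoretic splitting. First, Lemma 7.1 proves that a two-object category with a \emph{single} morphism $\alpha$ (your $\mathcal{C}_0$, i.e., the subcategory $\mathcal{K}$ with groups $G_0$, $H_0$) is of finite representation type, not by string/Brauer-tree combinatorics, but by restricting to the Sylow subcategory $\mathcal{S}$ (where Dietrich's classification, Proposition 5.1, gives finite type since $|\mathcal{C}(x,y)|\leqslant 1$) and transferring finiteness upward with Proposition 6.8 applied twice, once to $(\mathcal{J},\mathcal{S})$ and once to $(\mathcal{C}^{\textnormal{op}},\mathcal{J}^{\textnormal{op}})$. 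Second, the passage from $\mathcal{K}$ to $\mathcal{C}$ is again Proposition 6.8: since all $p$-subgroups act trivially, $n=|G:G_0|=|H:H_0|$ is invertible in $k$, and an explicit averaging over coset representatives (the maps $\delta_V(v)=\frac{1}{n}\sum_i g_i\otimes g_i^{-1}v$, etc.) shows $M\mid M\downarrow_{\mathcal{K}}^{\mathcal{C}}\uparrow_{\mathcal{K}}^{\mathcal{C}}$ for every $M$. This relative-projectivity argument needs only invertibility of the index, never a complement to $G_0$ in $G$; replacing your skew-group step by it would make your outline correct.
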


Although cases (b)+(1) and (b)+(2) are not completely resolved here, by considering the structure of certain permutation modules we are able to obtain the following criteria.

\begin{theorem}
Let $\mathcal{C}$ be as in Theorem 1.1 and suppose without loss of generality that $H$ acts transitively on $\mathcal{C} (x, y)$. If $k\mathcal{C}$ is of finite representation type, then the following conditions must be satisfied:
\begin{enumerate}
\item $\dim_k \End_{kH} (k \uparrow _{H_1}^H) = | H_1 \backslash H /H_1 | \leqslant 3$;
\item all summands of $\Top (k \uparrow _{H_1}^H)$ are not isomorphic;
\item If $M, N$ are indecomposable summands of $k \uparrow _{H_1}^H$, then $\Hom _{kH} (M, N) = 0$.
\end{enumerate}
\end{theorem}

We classify the representation type of $k\mathcal{C}$ for $p \geqslant 5$ when both $G$ and $H$ are abelian.

\begin{theorem}
Let $\mathcal{C}$ be as in Theorem 1.1 and $\Char (k) = p \neq 2,3$. Without loss of generality assume that $H$ acts transitively on $\mathcal{C} (x, y)$. Let $s = | O^{p'}G |$, $t = | O^{p'}H |$, and $n = | H: H_1 |$. If both $G$ and $H$ are abelian, then $k\mathcal{C}$ is of finite representation type if and only if both $O^{p'}G$ and $O^{p'}H$ act trivially on $\mathcal{C} (x, y)$, and one of the following conditions holds:
\begin{enumerate}
\item $n = 1$ for $s, t \geqslant p$, i.e., $H = H_1$;
\item $n \leqslant 2$ for $s = 1, t \geqslant p$ or $t = 1, s \geqslant p$;
\item $n \leqslant 3$ for $t = s = 1$.
\end{enumerate}
\end{theorem}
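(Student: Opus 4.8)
The plan is to reduce the entire statement to a representation-type computation for a single explicit family of ``spider'' algebras, feeding the necessity half from Theorems 1.1 and 1.3. As permitted by Theorem 1.1(2) I assume throughout that $H$ acts transitively, so $\mathcal{C}(x,y) = H\alpha$ and $G_1 = G$. Applying Goursat's lemma to $L = \{(h,g)\in H\times G : h\alpha = \alpha g\}$ then exhibits $\mathcal{C}(x,y)$ as the biset $(H\times G)/L$, with $\pi_G(L)=G$, $\pi_H(L)=H_1$, point stabilizers $\Stab_G(\alpha)=G_0$ and $\Stab_H(\alpha)=H_0$, and the isomorphism $G/G_0\cong H_1/H_0$ recorded in the statement. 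Because $H$ is abelian, the number of right $G$-orbits on $\mathcal{C}(x,y)$ equals $n=|H:H_1|$, and $|H_1\backslash H/H_1|=|H/H_1|=n$.

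First I would show that both $O^{p'}G$ and $O^{p'}H$ must act trivially. By Theorem 1.1(3) at least one of them does; say $O^{p'}G$ acts trivially, so $G/G_0$ is a $p'$-group and hence so is $H_1/H_0\cong G/G_0$, forcing $H_0$ and $H_1$ to share a Sylow $p$-subgroup. If $O^{p'}H$ acted nontrivially then $H_p\not\leq H_0$, so that common $p$-part would be proper in $H_p$ and $p\mid |H:H_1| = n$; but Theorem 1.3(4) gives $n=|H_1\backslash H/H_1|\leq 3 < p$, a contradiction. In the other case, where it is $O^{p'}H$ that acts trivially, $H_p\leq H_0$ makes $|H|$ and hence $|G/G_0|\cong|H_1/H_0|$ prime to $p$, which forces $G_p\leq G_0$ automatically. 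Either way both Sylow subgroups act trivially, i.e. $G_p\leq G_0$ and $H_p\leq H_0$.

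Granting this, I set up the structural reduction. Since $G=G_p\times G_{p'}$ and $H=H_p\times H_{p'}$ are abelian and $k$ is algebraically closed, every block of $kG$ is $k[u]/(u^s)$ and every block of $kH$ is $k[v]/(v^t)$. A representation of $\mathcal{C}$ is a triple $(V,W,f)$ with $f\colon k\mathcal{C}(x,y)\otimes_{kG}V\to W$ a $kH$-map, and the biset composition formula identifies $k\mathcal{C}(x,y)\otimes_{kG}(-)$ with $\mathrm{Ind}_{H_1}^{H}\mathrm{Inf}_{H_1/H_0}^{H_1}\theta_{\ast}(\,\cdot\,)_{G_0}$. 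Because $G_p\leq G_0$ the $G_0$-coinvariants see only $\Top$ of each $k[u]/(u^s)$-block, and because $H_p\leq H_0$ the induced module is semisimple; decomposing $V$ and $W$ into blocks therefore yields a bipartite graph in which each character $\chi$ of $G_{p'}$ is joined to exactly the $n$ characters of $H$ restricting to $\theta_{\ast}\bar\chi$ on $H_1$, and each character of $H$ is joined to at most one $\chi$. Hence, apart from isolated local Nakayama blocks (which are of finite type), $k\mathcal{C}$ splits into copies of one algebra $\Gamma_{n,s,t}$: a central vertex with loop $u$, $u^s=0$, joined by arrows $a_1,\dots,a_n$ to $n$ leaves each carrying a loop $v_j$, $v_j^t=0$, with relations $a_j u=0$ and $v_j a_j=0$. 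Thus $\mathcal{C}$ is of finite representation type if and only if $\Gamma_{n,s,t}$ is.

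It remains to determine the type of $\Gamma_{n,s,t}$, and this is the heart of the matter. My plan is to prove that $\Gamma_{n,s,t}$ has the same representation type as the hereditary star whose arms have lengths $s-1$ (the central arm, shortened by one owing to $a_j u=0$) and $t,\dots,t$ ($n$ leg arms). When $s,t\leq 2$ the algebra is radical-square-zero and this is immediate from the separated quiver; in general I would establish it either by a (co)tilting linking $\Gamma_{n,s,t}$ to that star, or—following the paper's citations to Bongartz—by locating a critical (minimal representation-infinite) algebra inside $\Gamma_{n,s,t}$ exactly when the star is not Dynkin. Finite type is then equivalent to the star being Dynkin, and translating the star classification (for three arms, finiteness holds iff $\tfrac1{\ell_1+1}+\tfrac1{\ell_2+1}+\tfrac1{\ell_3+1}>1$) reproduces conditions (1)--(3): for $s=t=1$ the $n$ unit arms give $K_{1,n}$, finite iff $n\leq3$; if exactly one of $s,t$ is $1$ one arm lengthens and finiteness drops to $n\leq2$; and if $s,t\geq p\geq5$ the arms $(s-1,t,t)$ are too long so only $n=1$, a path, survives. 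The step I expect to be the main obstacle is precisely this identification for loops of order $\geq p$: there $\mathrm{rad}^2\neq0$, so the elementary separated-quiver criterion only controls the radical-square-zero quotients, and these can be finite even when $\Gamma_{n,s,t}$ is wild—indeed $\Gamma_{2,2,2}$ has separated quiver $E_6$ and is finite, which is exactly why $p=2,3$ must be excluded and why the threshold $n=1$ in case (1) appears only once $s,t\geq5$. Making the arm-length correspondence rigorous and verifying that the Dynkin thresholds fall at $p=5$ is where the real work lies; the reductions above and the inputs from Theorems 1.1 and 1.3 are comparatively routine.
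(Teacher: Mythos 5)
Your reduction is, up to packaging, the paper's own: your $\Gamma_{n,s,t}$ is exactly the block algebra of the paper's Lemmas 7.9--7.10 (quiver with central loop $\theta$, $\theta^s=0$, arrows $\alpha_j$ to $n$ leaves with loops $\delta_j$, $\delta_j^t=0$, and relations $\alpha_j\theta=\delta_j\alpha_j=0$), and your necessity argument is a legitimate and in fact slightly cleaner variant: where the paper's Lemma 7.8 invokes the dichotomy of Proposition 6.13 together with the non-normality criterion in Proposition 7.5, you use Theorem 1.1(3) plus the observation that abelianness makes $|H_1\backslash H/H_1| = |H:H_1| = n$, so Theorem 1.3(4) gives $n\leqslant 3 < p$ and forces $H_p\leqslant H_0$ (the slip ``makes $|H|$ prime to $p$'' should read $|H_1/H_0|$, but the intended chain $H_p\leqslant H_0 \Rightarrow H_1/H_0$ a $p'$-group $\Rightarrow G_p\leqslant G_0$ is correct since $G=G_1$ and $G$ is abelian). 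Up to this point the proposal is sound.

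The genuine gap is the step you yourself flag as ``the heart of the matter'': the representation type of $\Gamma_{n,s,t}$ is asserted, not proved, and your first proposed route cannot work as stated. A (co)tilting or derived equivalence between $\Gamma_{n,s,t}$ and a hereditary star is impossible once $s\geqslant 2$ or $t\geqslant 2$: the quiver of $\Gamma_{n,s,t}$ has loops and the algebra has infinite global dimension (e.g.\ the relations $\theta^s = \alpha_j\theta = 0$ make the uniserial module $\mathrm{rad}(P_c)/\!\sum_j k\alpha_j$ have non-terminating resolution), whereas anything tilted from, or derived equivalent to, a hereditary algebra has finite global dimension and no loops. So the ``same type as the star with arms $(s-1,t,\dots,t)$'' can at best be an a posteriori coincidence of types, verified case by case --- which is precisely what the paper does: its proof of the theorem settles the finitely many patterns arising from $s,t\in\{1\}\cup\{p,p^2,\dots\}$, $n\leqslant 3$, using covering theory for the infinite cases (it works out $t=1$, $s\geqslant p$, $n=3$ explicitly, finding a non-locally-representation-finite covering) and using Lemma 7.1, Theorem 7.2, and the induction machinery of Proposition 6.8 (and Proposition 5.1) for the finite cases. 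Your second suggested route (critical algebras/coverings \`a la Bongartz) is viable and is the paper's route for the infinite cases, but you do not execute it, and it cannot by itself deliver the finite-type cases $(n,s,t)\in\{(2,1,t),(2,s,1),(3,1,1),(1,s,t)\}$: those require a positive argument (string-algebra or Auslander--Reiten quiver computations, or the paper's relative-projectivity inductions), not merely the absence of a critical subquiver. Your consistency checks (separated quiver giving $E_6$ for $\Gamma_{2,2,2}$, the arm-length formula matching all cases of the theorem) are good evidence for the claimed correspondence but do not substitute for this verification, so the sufficiency half and the infinite cases beyond the one you could import from Theorem 1.1 remain unproven.
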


This paper is organized as follows. Some background knowledge and preliminary reductions on finite EI categories and their representations are included in the second section. In Section 3 we study in details decomposition of bisets and actions of groups on these bisets, and develop criteria contained in Theorems 1.1 and 1.3. Using these results, we determine the representation types of category algebras of most finite EI categories with two objects in Section 4. In the last section we consider some special finite EI categories with three objects.

Throughout this paper $k$ is an algebraically closed field and $k$-vec is the category of finite-dimensional $k$-vector spaces. For a finite EI category $\mathcal{C}$, by $\Ob \mathcal{C}$ and $\Mor \mathcal{C}$ we denote sets of objects and morphisms in $\mathcal{C}$ respectively; $\mathcal{C} (x, y)$ denotes the set of morphisms from $x$ to $y$; and by $\mathcal{C}$-rep (or $k\mathcal{C}$-mod) we denote the category of representations of $\mathcal{C}$. For a module $M$, $\Top (M)$ and $\Soc (M)$ mean the top and the socle of $M$ respectively. All modules we consider are finitely generated left modules. Composition of morphisms and maps is from right to left. To deal with all characteristics in a uniform way, when $p = 0$, we view the trivial group $1$ as the Sylow 0-subgroup of a finite group.

\section{Preliminaries}

For the reader's convenience, we include in this section some background on the representation theory of finite EI categories. Please refer to \cite{Webb2, Webb3, Xu1, Xu2} for more details.

A \textit{finite EI category} $\mathcal{C}$ is a small category with finitely many morphisms such that every endomorphism in $\mathcal{C}$ is an isomorphism. Examples of finite EI categories include finite groups (viewed as categories with one object), finite posets (all endomorphisms are identities), and \textit{orbit categories} (see \cite{Webb3}). The category $\mathcal{C}$ is \textit{connected} if for any two distinct objects $x$ and $y$, there is a list of objects $x=x_0, x_1, \ldots, x_n=y$ such that either $\mathcal{C} (x_i, x_{i+1})$ or $\mathcal{C} (x_{i+1}, x_i)$ is not empty, $0 \leqslant i \leqslant n-1$. Every finite EI category is a disjoint union of connected components, and each component as a full subcategory is again a finite EI category.

A \textit{representation} of $\mathcal{C}$ is a covariant functor $R: \mathcal{C} \rightarrow k$-vec. The functor $R$ assigns a vector space $R(x)$ to each object $x$ in $\mathcal{C}$, and a linear transformation $R(\alpha): R(x) \rightarrow R(y)$ to each morphism $\alpha: x \rightarrow y$ such that all composition relations of morphisms in $\mathcal{C}$ are preserved under $R$. A \textit{homomorphism} $\varphi: R_1 \rightarrow R_2$ of two representations is a natural transformation of functors.

A finite EI category $\mathcal{C}$ determines a finite-dimensional $k$-algebra $k\mathcal{C}$ called the \textit{category algebra}. It has a basis constituted of all morphisms in $\mathcal{C}$, and the multiplication is defined by the composition of morphisms (the composite is 0 when two morphisms cannot be composed) and bilinearity. By Theorem 7.1 of \cite{Mitchell}, a representation of $\mathcal{C}$ is equivalent to a $k\mathcal{C}$-module. Thus we do not distinguish these two concepts throughout this paper.

By Proposition 2.2 in \cite{Webb2}, if $\mathcal{C}$ and $\mathcal{D}$ are equivalent finite EI categories, $k \mathcal{C}$-mod is Morita equivalent to $k \mathcal{D}$-mod. Moreover, if $\mathcal{C} = \bigsqcup _{i=1}^{m} \mathcal{C}_i$ is a disjoint union of several full subcategories, then $k\mathcal{C} = k\mathcal{C}_1 \oplus \ldots \oplus k\mathcal{C}_m$ as algebras. Thus it is sufficient to study the representation types of connected, skeletal finite EI categories. We make the following convention:\\

\noindent \textbf{Convention:} \textit{All finite EI categories in this paper are \textbf{connected} and \textbf{skeletal}. Thus endomorphisms, isomorphisms and automorphisms in a finite EI category coincide.}\\

Under this hypothesis, if $x$ and $y$ are two distinct objects in $\mathcal{C}$ with $\mathcal{C} (x,y) \neq \emptyset$, then $\mathcal{C} (y,x)$ is empty. Indeed, if this is not true, we can take $\alpha \in \mathcal{C} (y, x)$ and $\beta \in \mathcal{C} (x,y)$. The composite $\beta \alpha$ is an endomorphism of $y$, hence an automorphism. Similarly, the composite $\alpha \beta$ is an automorphism of $x$. Thus both $\alpha$ and $\beta$ are isomorphisms, so $x$ is isomorphic to $y$. But this is impossible since $\mathcal{C}$ is skeletal and $x \neq y$.

We give here some illustrative examples showing that the representation theory of finite EI categories has applications in relevant fields such as representation theory of finite groups.

\begin{example}
Let $\mathcal{C}$ be the following finite EI category such that: $\mathcal{C} (x, x) \cong \mathcal{C} (y, y) \cong G$ which is a finite group; $\mathcal{C} (x, y) \cong G$ on which $\mathcal{C} (x, x)$ acts by multiplication from the right side and $\mathcal{C} (y, y)$ acts by multiplication from the left side.
\begin{equation*}
\xymatrix{ \mathcal{C}: & x \ar@(ld,lu)[]|{G} \ar @<1ex>[rr] ^{_GG_G} \ar@<-1ex>[rr] ^{\ldots} & & y \ar@(rd,ru)_G}
\end{equation*}
Then a representation $\mathcal{C}$ is nothing but a $kG$-module homomorphism $\varphi: M \rightarrow N$.

Define another finite EI category $\mathcal{D}$ as below, where: $\mathcal{D} (x, x) \cong \mathcal{D} (y, y) \cong \langle g \rangle$ is a cyclic group of order $p = \Char (k)$; $\mathcal{D} (x, y) = \{ \alpha \}$ on which both automorphism groups act trivially.
\begin{equation*}
\xymatrix{ \mathcal{D}: & x \ar@(ld,lu)[]| { \langle g \rangle} \ar[rr] ^{\alpha} & & y \ar@(rd,ru)[]| {\langle g \rangle}}
\end{equation*}
A representation of $\mathcal{D}$ is a $k \langle g \rangle$-module homomorphism $\varphi: M \rightarrow N$ such that $\varphi (\Top (M)) \subseteq \Soc (N)$.
\end{example}

Let $F: \mathcal{D} \rightarrow \mathcal{C}$ be a functor between two finite EI categories. Then $F$ induces a functor Res$_{F}: \mathcal{C}\text{-rep} \rightarrow \mathcal{D}$-rep. Explicitly, for $M \in \mathcal{C}$-rep, Res$_{F} (M) = M \circ F$ is a functor from $\mathcal{D}$ to $k$-vec. This functor Res$_{F}$ is called the \textit{restriction} along $F$ (Definition 3.2.1 in \cite{Xu1}). In the case that $F$ is injective on $\Ob \mathcal{D}$, it induces an algebra homomorphism $\varphi_F: k\mathcal{D} \rightarrow k\mathcal{C}$ by sending $\alpha \in \Mor \mathcal{D}$ to $F(\alpha) \in \Mor \mathcal{C}$ (Proposition 3.1, \cite{Webb2}). In that case, the restriction functor Res$_{F}$ is precisely the usual restriction functor from $k\mathcal{C}$-mod to $k\mathcal{D}$-mod induced  by the algebra homomorphism $\varphi_F$.

An important case is that $\mathcal{D}$ is a subcategory of $\mathcal{C}$ and $F$ is the inclusion functor. Then $k\mathcal{D}$ is a subalgebra of $k\mathcal{C}$ and $\varphi_F$ is the inclusion. In this case we define the \textit{induction functor} $\uparrow _{\mathcal{D}} ^{\mathcal{C}}$ mapping $N \in k\mathcal{D}$-mod to $k\mathcal{C} \otimes _{k \mathcal{D}} N \in k\mathcal{C}$-mod. The \textit{co-induction functor} $\Uparrow _{\mathcal{D}} ^{\mathcal{C}}$ sends $N$ to $N \Uparrow _{\mathcal{D}} ^{\mathcal{C}} = \Hom _{k\mathcal{C}} (k\mathcal{C}, N)$.

In his thesis Xu has developed an induction-restriction theory for finite EI categories. We do not repeat his results here and suggest the reader to look at \cite{Xu1, Xu2} for a detailed description. Although many results in the representation theory of finite groups generalize to finite EI categories, there exist differences, as shown by the following example.

\begin{example}
Let $\mathcal{C}$ be the following EI category such that $g$ has order 2 and permutes $\alpha$ and $\beta$. The category algebra $k\mathcal{C}$ has finite representation type.
\begin{equation*}
\xymatrix{ \mathcal{C}: & x \ar@(ld,lu)[]|{1} \ar @<0.5ex>[rr] ^{\alpha} \ar@<-0.5ex>[rr] _{\beta} & & y \ar@(rd,ru) []|{ \langle g \rangle}}
\end{equation*}
Let $\mathcal{D}$ be the subcategory formed by deleting the morphism $g$. Actually, $\mathcal{D}$ viewed as a quiver is the Kronecker quiver. Therefore, $k\mathcal{D}$ is of infinite representation type. Suppose that $\Char(k) = 0$. Then $R: \xymatrix{ k \ar @<0.5ex>[r] ^1 \ar@<-0.5ex>[r] _3 & k}$ is a representation of $\mathcal{D}$. Clearly, $\dim _k R = 2$. Let $\{u, v\}$ be a basis of $R$ such that $\alpha \cdot u = v$ and $\beta \cdot u = 3v$.

Consider the induced module $\tilde{R}: k\mathcal{C} \otimes _{k\mathcal{D}} R$. We have
\begin{equation*}
g \otimes v = g \otimes \alpha \cdot u = g\alpha \otimes u = \beta \otimes u = 1_y \otimes \beta \cdot u = 1_y \otimes 3v = 3(1_y \otimes v).
\end{equation*}
Therefore,
\begin{equation*}
1_y \otimes v = g^2 \otimes v = g(g \otimes v) = g (3(1_y \otimes v)) = 3 (g \otimes v) = 9(1_y \otimes v).
\end{equation*}
Since $\Char(k) = 0$, we get $1_y \otimes v = 0$. Consequently,
\begin{equation*}
\alpha \otimes u = 1_y \otimes \alpha \cdot u = 1_y \otimes v =0
\end{equation*}
and similarly $\beta \otimes u = 0$. Therefore, $\tilde{R}$ is spanned by $1_x \otimes u$, so is one-dimensional! Actually, it has the form $\tilde{R} (x) \cong k \rightarrow \tilde{R} (y) = 0$.
\end{example}

The following result relates the representation type of $k\mathcal{C}$ to that of the category algebra of a subcategory.

\begin{proposition}
Let $\mathcal{D}$ be a subcategory of a finite EI category $\mathcal{C}$.
\begin{enumerate}
\item If $M \mid M \downarrow _{\mathcal{D}} ^{\mathcal{C}} \uparrow _{\mathcal{D}} ^{\mathcal{C}}$ for all $M \in k\mathcal{C}$-mod, then $k\mathcal{C}$ is of finite representation type whenever so is $k\mathcal{D}$.
\item Dually, if $N \mid N \uparrow _{\mathcal{D}} ^{\mathcal{C}} \downarrow _{\mathcal{D}} ^{\mathcal{C}}$ for all $N \in k\mathcal{D}$-mod, then $k\mathcal{C}$ is of infinite representation type whenever so is $k\mathcal{D}$.
\item If $k\mathcal{D} \mid (_{k\mathcal{D}} k\mathcal{C} _{k\mathcal{D}})$, then $N \mid N \uparrow _{\mathcal{D}} ^{\mathcal{C}} \downarrow _{\mathcal{D}} ^{\mathcal{C}}$ for every $N \in k\mathcal{D}$-mod. In particular, if $k\mathcal{D}$ is of infinite representation type, so is $k\mathcal{C}$.
\end{enumerate}
\end{proposition}

\begin{proof}
Note that $k\mathcal{D}$ is a subalgebra of $k\mathcal{C}$. The conclusions follows from well known results of induction and restriction procedure.
\end{proof}

If furthermore $\mathcal{D}$ is a full subcategory of $\mathcal{C}$, then $k \mathcal{D} = 1_{\mathcal{D}} k\mathcal{C} 1_{\mathcal{D}}$, and the restriction functor Res$_F$ sends $M \in k\mathcal{C}$-mod to $M \downarrow _{\mathcal{D}} ^{\mathcal{C}} = 1_{\mathcal{D}} M$. It has a left adjoint functor $\uparrow _{\mathcal{D}} ^{\mathcal{C}}$ and a right adjoint functor $\Uparrow _{\mathcal{D}} ^{\mathcal{C}}$. In this case, for every $k\mathcal{D}$-module $N$, we have $N \uparrow _{\mathcal{D}} ^{\mathcal{C}} \downarrow _{\mathcal{D}} ^{\mathcal{C}} \cong N$, see \cite{Xu1}. This immediately implies:

\begin{proposition}
Let $\mathcal{D}$ be a full subcategory of a finite EI category $\mathcal{C}$. If $k\mathcal{D}$ is of infinite representation type, so is $k\mathcal{C}$.
\end{proposition}

Now we consider another type of important functors $G: \mathcal{C} \rightarrow \mathcal{D}$ between two finite EI categories, called \textit{quotient functors}. That is, $G$ is a full functor and is bijective restricted to objects. Correspondingly, we call $\mathcal{D}$ a \textit{quotient category} of $\mathcal{C}$. By Proposition 3.1 in \cite{Webb2}, $G$ induces an algebra homomorphism $\varphi_G: k\mathcal{C} \rightarrow k\mathcal{D}$. Moreover, this homomorphism is surjective, so $k\mathcal{D}$ is a quotient algebra of $k\mathcal{C}$, and hence its infinite representation type implies the infinite representation type of $k\mathcal{C}$. Namely we proved

\begin{proposition}
Let $\mathcal{D}$ be a quotient category of a finite EI category $\mathcal{C}$. If $k\mathcal{C}$ is of finite representation type, so is $k\mathcal{D}$.
\end{proposition}

Applying these preliminary results, we obtain the following elementary criteria for finite representation type. Recall the underlying poset $\mathcal{P}$ of a finite EI category $\mathcal{C}$ is defined as follows: elements in $\mathcal{P}$ coincide with objects in $\mathcal{C}$; for two objects $x, y$, we define $x \leqslant y$ if $\mathcal{C} (x, y) \neq \emptyset$. The reader can check that $\mathcal{P}$ is indeed a poset when $\mathcal{C}$ is skeletal.

\begin{proposition}
Let $\mathcal{C}$ be a finite EI category such that $k\mathcal{C}$ is of finite representation type. Then the following conditions must be true:
\begin{enumerate}
\item The group $\mathcal{C} (x, x)$ is of finite representation type for each $x \in \Ob \mathcal{C}$.
\item The underlying poset $\mathcal{P}$ of $\mathcal{C}$ is of finite representation type.
\item For $x \neq y \in \Ob \mathcal{C}$,  $\mathcal{C}(x,y)$ has at most one orbit as a $(\mathcal{C}(y, y), \mathcal{C} (x, x))$-biset.
\item If $x, y, z \in \Ob (\mathcal{C})$ are distinct such that $\mathcal{C} (x,y)$ and $\mathcal{C} (y,z)$ are non-empty, then $\mathcal{C} (x,z) = \mathcal{C} (y,z) \circ \mathcal{C} (x,y) = \{ \beta \circ \alpha \mid \alpha \in \mathcal{C}(x,y), \beta \in \mathcal{C} (y,z) \}$.
\end{enumerate}
\end{proposition}

\begin{proof}
(1): Apply Proposition 2.4 to the full subcategory formed by $x$.

(2): There is a quotient functor $F: \mathcal{C} \rightarrow \mathcal{P}$. It is the identity map restricted to $\Ob \mathcal{C}$, and sends every morphism (if exists) in $\mathcal{C} (x, y)$ to the unique morphism $\mathcal{P} (x, y)$ for $x, y \in \Ob \mathcal{C}$. The conclusion follows from Proposition 2.5.

(3): Consider the full subcategory $\mathcal{D}$ with objects $x$ and $y$. Let $O_1, O_2, \ldots, O_n$ be the $(\mathcal{D}(y, y), \mathcal{D} (x, x))$-orbits of $\mathcal{D} (x, y) = \mathcal{C} (x, y)$. Take a representative $\alpha_i$ for each orbit $O_i$, $1 \leqslant i \leqslant n$. Let $\mathcal{E}$ be the following category: $\Ob \mathcal{E} = \{x, y\}$ and $\Mor {\mathcal{E}} = \{1_x, 1_y, \alpha_1, \ldots, \alpha_n\}$. We claim that $\mathcal{E}$ is a quotient category of $\mathcal{D}$.

Indeed, define $G: \mathcal{D} \rightarrow \mathcal{E}$ as follows. Restricted to $\Ob \mathcal{D}$, $G$ is the identity map; $G (\delta) = 1_x$ for $\delta \in \mathcal{D} (x, x)$; $G (\rho) = 1_y$ for $\rho \in \mathcal{D} (y, y)$; and $G (\alpha) = \alpha_i$ for $\alpha \in O_i \subseteq \mathcal{D} (x, y)$, $1 \leqslant i \leqslant n$. We check that $G$ is well defined. Therefore $\mathcal{E}$ is a quotient category of $\mathcal{D}$, so $k\mathcal{E}$ must be of finite representation type. But this happens if and only if $n \leqslant 1$.

(4): Clearly, $\mathcal{C} (y,z) \circ \mathcal{C} (x,y) \subseteq \mathcal{C} (x, z)$. We show the other inclusion by contradiction. Suppose that $\mathcal{C} (y,z) \circ \mathcal{C} (x,y)$ is a proper subset of $\mathcal{C} (x, z)$. We can take $\varphi$, $\psi \in \mathcal{C} (x,z)$ such that $\varphi \in \mathcal{C} (y,z) \circ \mathcal{C} (x,y)$ and $\psi \notin \mathcal{C} (y,z) \circ \mathcal{C} (x,y)$. Then the $(\mathcal{C} (z, z), \mathcal{C} (x, x))$-orbit generated by $\varphi$ is contained in $\mathcal{C} (y,z) \circ \mathcal{C} (x,y)$. So $\varphi$ and $\psi$ are in different orbits. Therefore, $\mathcal{C} (x, z)$ as a $(\mathcal{C} (z, z), \mathcal{C} (x, x))$-biset has more than one orbits. By (3), $k\mathcal{C}$ is of infinite representation type. The conclusion follows from contradiction.
\end{proof}

In the rest of this paper we only consider finite EI categories $\mathcal{C}$ satisfying all conditions in Proposition 2.6 since otherwise we can immediately conclude that $k\mathcal{C}$ is of infinite representation type.

\section{Criteria of finite representation type}

Throughout this section let $\mathcal{C}$ be a (connected, skeletal) finite EI category with two objects $x$ and $y$. Without loss of generality we assume that $\mathcal{C} (x, y) \neq \emptyset$, but $\mathcal{C} (y, x) = \emptyset$. Conditions (2) and (4) in Proposition 2.6 are satisfied trivially. We suppose that conditions (1) and (3) also hold. That is, both $\mathcal{C} (x, x)$ and $\mathcal{C} (y, y)$ have cyclic Sylow $p$-subgroups, and $\mathcal{C} (x, y)$ has only one orbit as a biset.

Let $G = \mathcal{C} (x, x)$ and $H = \mathcal{C} (y, y)$. Fix a representative morphism $\alpha: x \rightarrow y$ in the unique orbit. Then the structure of $\mathcal{C}$ can be pictured as below:
\begin{equation*}
\xymatrix{ \mathcal{C}: & x \ar@(ld,lu)[]|{G} \ar @<1ex>[rr] ^{H \alpha G} \ar@<-1ex>[rr] ^{\ldots} & & y \ar@(rd,ru)[]|{H}}
\end{equation*}

The following notation will be used in this section: $G_0 = \Stab _{G} (\alpha)$, $H_0 = \Stab _{H} (\alpha)$, $G_1 = \Stab _{G} (H\alpha)$, and $H_1 = \Stab _H (\alpha G)$. By Lemma 3.3 in \cite{Li1}, $G_0 \lhd G_1 \leqslant G$, $H_0 \lhd H_1 \leqslant H$, and $G_1/G_0 \cong H_1/H_0$. We identify these two quotient groups. Clearly, $G$ acts transitively on $\mathcal{C} (x, y)$ if and only if $H_1 = H$, and $H$ acts transitively on $\mathcal{C} (x, y)$ if and only if $G_1 = G$.

\subsection{A description of representations}

Let $R$ be a representation of $\mathcal{C}$. It is easy to check that $R(x)$ (resp., $R(y)$) is a $kG$-module (resp., a $kH$-module). Moreover, suppose that $(\varphi_x, \varphi_y): R_1 \rightarrow R_2$ is a $k\mathcal{C}$-module homomorphism, then $\varphi_x: R_1(x) \rightarrow R_2 (x)$ is a $kG$-module homomorphism, and $\varphi_y: R_1(y) \rightarrow R_2 (y)$ is a $kH$-module homomorphism. For an arbitrary $\beta \in \mathcal{C} (x, y)$, since $\mathcal{C} (x, y)$ has only one orbit as a biset, we can find some $g \in G$ and $h \in H$ such that $\beta = h \alpha g$. Therefore, $R(\beta) = R(h) R(\alpha) R(g)$.

From the above analysis we know that $R$ is uniquely determined by a linear map $R(\alpha)$ from a $kG$-module $V$ to a $kH$-module $W$. Clearly, not every linear map $\varphi: V \rightarrow W$ can be used to define a representation of $\mathcal{C}$. We characterizes in the next proposition those linear maps which indeed define representations of $\mathcal{C}$. Let $V^{\bot}$ be the kernel of $\varphi$, $\bar{V} = V / V^{\bot}$, and $W^{\top}$ be the image of $\varphi$. The map $\varphi$ gives rise to a vector space isomorphism $\bar{\varphi}: \bar{V} \rightarrow W^{\top}$ defined by $\bar{\varphi} (\bar{v}) = \varphi (v)$.

\begin{proposition}
Notation as above. The linear map $\varphi: V \rightarrow W$ determines a representation $R$ of $\mathcal{C}$ by defining $R(\alpha) = \varphi$ if and only if $\bar{V}$ (resp., $W^{\top}$) is a $kG_1$-module (resp., a $kH_1$-module) on which $G_0$ (resp., $H_0$) acts trivially, and $\bar{\varphi}$ is a $k(G_1/G_0) \cong k(H_1/H_0)$-module isomorphism.
\end{proposition}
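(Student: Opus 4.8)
The plan is to verify that the data of a functor $R\colon \mathcal{C} \to k\text{-vec}$ with $R(\alpha) = \varphi$ is equivalent to the stated conditions on $\varphi$, by unwinding what the functoriality axioms impose. The starting point is that $\mathcal{C}(x,y)$ is a single $(H,G)$-orbit, so $R$ is completely determined by the three pieces $V = R(x)$ (a $kG$-module), $W = R(y)$ (a $kH$-module), and the single linear map $\varphi = R(\alpha)$; every other $R(\beta)$ for $\beta = h\alpha g$ must equal $R(h)\varphi R(g)$. The entire content of the proposition is therefore to identify the compatibility condition that makes this assignment well-defined, i.e.\ independent of the way one writes a given $\beta$ as $h\alpha g$.

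First I would translate the stabilizer relations into conditions on $\varphi$. If $g \in G_0$, then $\alpha g = \alpha$, so functoriality forces $\varphi R(g) = \varphi$, i.e.\ $R(g)$ acts as the identity on $V$ modulo $V^{\bot} = \ker\varphi$; this is exactly the statement that $G_0$ acts trivially on $\bar V = V/V^{\bot}$. Dually, $h \in H_0$ gives $h\alpha = \alpha$, hence $R(h)\varphi = \varphi$, so $H_0$ acts trivially on $W^{\top} = \operatorname{im}\varphi$. Next, for $g \in G_1$ there exists $h \in H$ with $\alpha g = h\alpha$; the relation $\varphi R(g) = R(h)\varphi$ then shows that $R(g)$ carries $\ker\varphi$ into itself and induces a well-defined action on $\bar V$, so that $\bar V$ becomes a $kG_1$-module. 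Symmetrically $W^{\top}$ is stable under the corresponding $H_1$-action, so it is a $kH_1$-module. Under the identification $G_1/G_0 \cong H_1/H_0$ of Lemma 4.1, the relation $\varphi R(g) = R(h)\varphi$ says precisely that $\bar\varphi\colon \bar V \to W^{\top}$ intertwines these two actions, i.e.\ $\bar\varphi$ is a $k(G_1/G_0)$-module isomorphism (it is already a vector-space isomorphism by construction).

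For the converse I would start from a linear map $\varphi$ satisfying the listed conditions and check that setting $R(\beta) = R(h)\varphi R(g)$ for $\beta = h\alpha g$ yields a genuine functor. The key point is \emph{well-definedness}: if $h\alpha g = h'\alpha g'$ then $(h')^{-1}h\,\alpha\,g(g')^{-1} = \alpha$, and one must show $R(h)\varphi R(g) = R(h')\varphi R(g')$. I would reduce this to the case $h''\alpha g'' = \alpha$ (where $h'' = (h')^{-1}h$, $g'' = g(g')^{-1}$), which forces $g'' \in G_1$ with $\alpha g'' = (h'')^{-1}\alpha$, and then invoke exactly the three hypotheses: the $G_1/G_0$-equivariance of $\bar\varphi$ guarantees $\varphi R(g'') = R(h'')\varphi$ on the relevant quotients, while triviality of the $G_0$- and $H_0$-actions handles the ambiguity living inside $\ker\varphi$ and $\operatorname{im}\varphi$. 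Once well-definedness is established, checking that $R$ preserves composites and identities is routine bookkeeping using $R(h)$ and $R(g)$ on $V$ and $W$.

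The main obstacle I anticipate is the well-definedness verification in the converse, since the two factorizations $h\alpha g = h'\alpha g'$ can differ by an element of the full stabilizer, and one must check that the $G_0$-, $H_0$-triviality conditions and the $\bar\varphi$-equivariance condition together exactly cancel this ambiguity — neither condition alone suffices, and keeping track of how $\ker\varphi$ and $\operatorname{im}\varphi$ interact with the group actions requires care. Everything else (the forward direction and the functoriality axioms) reduces to direct substitution once the correct dictionary between factorizations and stabilizer elements is set up.
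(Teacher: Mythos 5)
Your proposal is correct and takes essentially the same approach as the paper: the forward direction is the same direct unwinding of functoriality into the stabilizer conditions, and the converse is the same well-definedness check for $R(h\alpha g) := R(h)\varphi R(g)$ across the two factorizations. One remark: your reduction to the single relation $h''\alpha g'' = \alpha$ (legitimate since $R(h')$ and $R(g')$ are invertible) is a slightly cleaner packaging of the paper's computation with $g_2 g_1^{-1}$ and $h_2^{-1}h_1$, since it keeps every element acting on $\bar{V}$ inside $G_1$ and thereby sidesteps the paper's explicit caveat that $\overline{g_1 v}$ cannot be rewritten as $g_1 \cdot \bar{v}$ for general $g_1 \in G$ (forcing the paper's trick $\overline{g_1 v} = \overline{g_1 g_2^{-1}} \cdot \overline{g_2 v}$) --- just correct the inverse in your intertwining relation, which given $\alpha g'' = (h'')^{-1}\alpha$ should read $\varphi R(g'') = R\bigl((h'')^{-1}\bigr)\varphi$, i.e.\ $R(h'')\varphi R(g'') = \varphi$, rather than $\varphi R(g'') = R(h'')\varphi$.
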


\begin{proof}
Suppose that there exists a representation $R$ of $\mathcal{C}$ such that $R(\alpha) = \varphi$, i.e, $\varphi(v) = \alpha v$ for all $v \in V$. Take $w \in W^{\top}$. Then we can find some $v \in V$ such that $\alpha v = w$. For every $h \in H_0$, we get $h w = h \alpha v = \alpha v = w$. Thus $H_0$ acts trivially on $W^{\top}$. Moreover, for $h \in H_1$, we can find some $g \in G$ such that $h \alpha = \alpha g$. Therefore, $h w = h \alpha v = \alpha gv = \varphi (gv)$, so $h w \in W^{\top}$. Consequently, $W^{\top}$ is a $kH_1$-module.

Now consider $\bar{V}$. First, for every $g \in G_1$ and $v \in V^{\bot}$, we can find some $h \in H$ such that $h \alpha = \alpha g$. Therefore, $\alpha (g v) = (h \alpha) v = h(\alpha v) = h \varphi(v) = 0$ since $v$ is in the kernel of $\varphi$. We deduce that $V^{\bot}$ is a $kG_1$-module, so $\bar{V}$ is a $kG_1$-module as well. For every $g \in G_0$ and $\bar{v} \in \bar{V}$, we have $\alpha (gv - v) = \alpha gv - \alpha v = \alpha v - \alpha v =0$. Therefore, $gv - v \in V^{\bot}$, and $g \bar{v} - \bar{v} = 0$. Thus $G_0$ acts trivially on $\bar{V}$.

Recall that the isomorphism $\rho: G_1 / G_0 \rightarrow H_1 /H_0$ is defined in the following way (see the proof of Lemma 3.3 in \cite{Li1}): For a given $g \in G_1$, we can find some $h \in H_1$ such that $\alpha g = h\alpha$. Then $\rho$ sends $\bar{g}$ to $\bar{h}$. Now the reader can check that this isomorphism gives an isomorphism between $\bar{V}$ and $W^{\top}$ as $k(G_1 /G_0) \cong k(H_1 / H_0)$-modules. This proves the only if part.

Conversely, if $\varphi$ satisfies the given conditions, then by defining $R(\alpha) (v) = \bar{\varphi} (\bar{v})$ we indeed obtain a representation $R$ of $\mathcal{C}$. To prove this, it suffices to show that if $h_1 \alpha g_1 = h_2 \alpha g_2$ for $g_1, g_2 \in G$ and $h_1, h_2 \in H$, then $h_1 \bar{\varphi} (\overline {g_1 v}) = h_2 \bar{\varphi} ( \overline {g_2 v})$ for every $v \in V$. That is, $h_2^{-1} h_1 \bar{\varphi} (\overline {g_1 v}) = \bar{\varphi} ( \overline {g_2 v})$, where $\overline{g_1 v}, \overline{g_2v}$ are the images of $g_1v$ and $g_2 v$ in $\bar{V}$ respectively.

Note that $h_1 \alpha g_1 = h_2 \alpha g_2$ implies $h_2^{-1}h_1 \alpha = \alpha g_2 g_1^{-1}$, so $h_2^{-1}h_1$ and $g_2 g_1^{-1}$ are contained in $H_1$ and $G_1$ respectively. Passing to the quotient groups, the isomorphism $\rho$ sends $\overline {g_2 g_1^{-1}} \in G_1/G_0$ to $\overline {h_2^{-1} h_1} \in H_1 /H_0$. Since $\bar{V}$ is a $kG_1$-module on which $G_0$ acts trivially and $W^{\top}$ is a $kH_1$-module on which $H_0$ acts trivially, and $\bar{\varphi}$ is a $k(G_1 /G_0) \cong k(H_1 / H_0)$-module isomorphism, we get
\begin{equation*}
h_2^{-1} h_1 \bar{\varphi} (\overline {g_1 v}) = \overline{ h_2^{-1} h_1} \bar{\varphi} (\overline {g_1 v}) = \bar{\varphi} (\overline {g_2 g_1^{-1}} \cdot \overline{g_1v}).
\end{equation*}

Note that in the above identity $\overline{g_1 v}$ cannot be expresses as $g_1 \overline{v}$ since $\bar{V}$ is only a $kG_1$-module instead of a $kG$-module, and the action of $g_1 \in G$ on $\bar{v}$ in general is not defined. However, we have
\begin{equation*}
\overline {g_1 v} = \overline {g_1 g_2^{-1} g_2 v} = (g_1g_2^{-1}) \cdot \overline {g_2v} = \overline{g_1g_2^{-1}} \cdot \overline {g_2v}
\end{equation*}
since $\bar{V}$ is a $kG_1$-module, $g_1g_2^{-1} \in G_1$, and $G_0$ acts trivially on $\bar{V}$. Combining these two identities together, we get
\begin{equation*}
h_2^{-1} h_1 \bar{\varphi} (\overline {g_1 v}) = \bar{\varphi} (\overline {g_2 g_1^{-1}} \cdot \overline{g_1v}) = \bar{\varphi} (\overline {g_2 g_1^{-1}g_1g_2^{-1}} \cdot \overline {g_2v} ) = \bar{\varphi} ( \overline {g_2 v})
\end{equation*}
as required. This finishes the proof.
\end{proof}

\begin{example}
Let $\mathcal{C}$ be a finite EI category with objects $x$ and $y$; $H = \mathcal{C} (y,y)$ is a copy of the symmetric group $S_3$ on 3 letters; $G= \mathcal{C} (x,x)$ is cyclic of order 2; $\mathcal{C} (x,y)= S_3$ regarded as an $(H,G)$-biset where $H$ acts from the left by multiplication, $G$ acts from the right by multiplication after identifying $G$ with a subgroup $G^{\dagger}$ of $S_3$; $\mathcal{C} (y,x) = \emptyset$. We have $G_0 = 1$, $G_1 = G$, $H_0 = 1$, $H_1 \cong C_2$. We set $\Char(k) = 3$ instead of 0 as before.

Let $V = kG$ and $W$ be the projective $kH$-module with $W / \rad W \cong k$. Then
\begin{equation*}
 V= V \downarrow _{G_1}^G = k \uparrow_{G_0}^{G_1} \cong k \oplus \epsilon, \qquad W \downarrow _{H_1} ^H \cong k \oplus k \oplus \epsilon, \qquad k\uparrow _{H_0}^{H_1} \cong k \oplus \epsilon.
\end{equation*}
Therefore, any linear map $\varphi$ with the following matrix representation can define a representation of $\mathcal{C}$: $\begin{bmatrix} \lambda & 0 \\ \mu & 0 \\ 0 & \nu \end{bmatrix}$, $\lambda, \mu, \nu \in k$.
\end{example}

\subsection{Biset decomposition, induction and restriction.}

In Section 2 we have defined the restriction functor and induction functor. Now we use these functors to study some special subcategories of $\mathcal{C}$. The main technique we use here is biset decomposition (see \cite{Bouc}).

The first special case we consider is the following category $\mathcal{D}$: $\Ob \mathcal{D} = \Ob \mathcal{C}$, $\mathcal{D} (x, x) = G_1$, $\mathcal{D} (y, y) = H$, and $\mathcal{D} (x, y) = H \alpha G_1 = H \alpha$. We picture the structure of $\mathcal{D}$ below:
\begin{equation*}
\xymatrix{ \mathcal{D}: & x \ar@(ld,lu)[]|{G_1} \ar @<1ex>[rr] ^{H \alpha G_1} \ar@<-1ex>[rr] ^{\ldots} & & y \ar@(rd,ru)[]|{H}}
\end{equation*}

\begin{lemma}
Let $\mathcal{D}$ be defined as above. Then $k\mathcal{D} \mid (_{k\mathcal{D}} k\mathcal{C} _{k\mathcal{D}})$. In particular, $k\mathcal{C}$ is of infinite representation type if so is $k\mathcal{D}$.
\end{lemma}

\begin{proof}
Let $g_1=1, \ldots ,g_m$ be a chosen set of representatives of the right cosets $G_1 \backslash G$. Clearly,
\begin{equation*}
\Mor \mathcal{C} = G \sqcup H \sqcup H \alpha G = (\bigsqcup _{i=1}^m G_1g_i) \sqcup H \sqcup (\bigcup _{i = 1}^m H \alpha g_i).
\end{equation*}
We claim that $\cup _{i = 1}^m H \alpha g_i$ is a disjoint union. If this is not true, we can find some $1 \leqslant i \neq j \leqslant m$ such that $H \alpha g_i \cap H\alpha g_j \neq \emptyset$. In particular, there exist $h_1, h_2 \in H$ such that $h_1 \alpha g_i = h_2 \alpha g_j$. Thus $h_2^{-1} h_1 \alpha = \alpha g_j g_i^{-1}$, and $g_j g_i^{-1} \in G_1$. Consequently, $g_i$ and $g_j$ are in the same coset. This is a contradiction.

Define $R_1 = k(G_1 \cup H\alpha), R_2 = k(G_1g_2 \cup H\alpha g_2), \ldots, R_m = k(G_1g_m \cup H\alpha g_m)$. They are all left $k\mathcal{D}$-modules. By the above biset decomposition, we get:
\begin{equation*}
_{k\mathcal{D}} k\mathcal{C} = kH \oplus R_1 \oplus R_2 \oplus \ldots \oplus R_m = k\mathcal{D} \oplus R
\end{equation*}
since $R_1 \oplus kH = k\mathcal{D}$.

The above decomposition $_{k\mathcal{D}} k\mathcal{C} = k\mathcal{D} \oplus R$ is also a decomposition of $k\mathcal{C} _{k\mathcal{D}}$. Indeed, $R = (\bigoplus _{i=2}^{m} kG_1g_i) \oplus (\bigoplus _{i=2}^{m} kH\alpha g_i)$ as a $k$-vector space. But both $\bigoplus _{i=2}^{m} kG_1g_i$ and $\bigoplus _{i=2}^{m} kH\alpha g_i$ are right $k\mathcal{D}$-modules. So $R$ is a right $k\mathcal{D}$-module as well. This proves the first statement. The second one follows from Proposition 2.3.
\end{proof}

Now consider the second special case. We introduce some notation here. Let $K \leqslant G_1$ be a subgroup containing $G_0$. Then $\alpha K \subseteq \alpha G_1 \subseteq H \alpha$. Define $L = \Stab _H (\alpha K)$. That is, $L = \{ h \in H \mid h \alpha K = \alpha K \} = \{ h \in H \mid \exists g \in K \text{ such that } h\alpha = \alpha g\}$.

\begin{lemma}
Notation as above. Then $L$ is a subgroup of $H_1$, and $L/H_0 \cong K/G_0$.
\end{lemma}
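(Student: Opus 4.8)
The plan is to exploit the group isomorphism $\rho \colon G_1/G_0 \to H_1/H_0$ established in Lemma 4.1 (and described explicitly in the proof of Proposition 6.1), and to show that $L/H_0$ is precisely the image $\rho(K/G_0)$ of the subgroup $K/G_0 \leqslant G_1/G_0$. Once this identification is in place, restricting $\rho$ to $K/G_0$ yields the desired isomorphism $K/G_0 \cong L/H_0$, and the inclusion $L \leqslant H_1$ falls out along the way.

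First I would record the routine structural facts. Since $\alpha K \subseteq H\alpha$ and $H$ acts on $H\alpha$ by left multiplication, $L = \Stab_H(\alpha K)$ is automatically a subgroup of $H$. For any $h \in L$ there is $g \in K \subseteq G$ with $h\alpha = \alpha g$, so $h \in H_1$ by the definition of $H_1$; hence $L \leqslant H_1$. Moreover $H_0 \leqslant L$, because for $h \in H_0$ the relation $h\alpha = \alpha = \alpha \cdot 1$ with $1 \in K$ gives $h\alpha K = \alpha K$. Since $G_0 \leqslant K \leqslant G_1$ with $G_0 \lhd G_1$, the quotient $K/G_0$ is a well-defined subgroup of $G_1/G_0$, while $H_0 \lhd H_1$ forces $H_0 \lhd L$. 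Thus both $K/G_0$ and $L/H_0$ make sense as subgroups of $G_1/G_0$ and $H_1/H_0$ respectively.

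The key step is the double inclusion $\rho(K/G_0) = L/H_0$. Recall that $\rho$ sends $\bar g$, for $g \in G_1$, to $\bar h$, where $h \in H_1$ is chosen with $\alpha g = h\alpha$. For the inclusion $\subseteq$, take $g \in K$ and the corresponding $h$ with $h\alpha = \alpha g$; then this very $g \in K$ witnesses $h \in L$, so $\rho(\bar g) = \bar h \in L/H_0$. For $\supseteq$, take $h \in L$ and pick $g \in K$ with $h\alpha = \alpha g$; by the defining rule of $\rho$ this says exactly $\rho(\bar g) = \bar h$, so $\bar h \in \rho(K/G_0)$. Therefore $\rho$ carries $K/G_0$ isomorphically onto $L/H_0$, which proves the claim.

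The argument is essentially formal once $\rho$ is available, so I do not expect a serious obstacle; the only point demanding care is keeping the left/right conventions straight. Specifically, one must observe that the relation $h\alpha = \alpha g$ defining membership in $L$ (an existence statement over $g \in K$) is literally the relation used to define $\rho$, so that the translation between ``$h \in L$'' and ``$\bar h \in \rho(K/G_0)$'' is immediate rather than requiring any manipulation.
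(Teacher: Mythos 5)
Your proof is correct. The paper itself disposes of this lemma by citation: it reruns the argument of Lemma 3.3 of \cite{Li1} with the pair $(K, L)$ in place of $(G_1, H_1)$, i.e., it directly constructs the correspondence $\bar{g} \leftrightarrow \bar{h}$ determined by $\alpha g = h\alpha$ and verifies well-definedness, the homomorphism property, and bijectivity for this smaller pair. You instead take the already-established global isomorphism $\rho \colon G_1/G_0 \to H_1/H_0$ as given and prove the single set-theoretic identity $\rho(K/G_0) = L/H_0$ by a double inclusion, after which injectivity of $\rho$ hands you the isomorphism for free. The underlying map is the same in both treatments, but your packaging is a genuine (if modest) improvement in economy: it avoids re-verifying that the correspondence is well defined on cosets and multiplicative, reducing everything to the observation that the existential condition ``$\exists g \in K$ with $h\alpha = \alpha g$'' defining $L$ is literally the defining rule of $\rho$ restricted to $K$. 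The paper's route (via Li's Lemma 3.3) has the advantage of being self-contained at the level of that external lemma and of not depending on the explicit description of $\rho$, which in the present paper only appears later, inside the proof of Proposition 6.1; if one is fastidious about logical ordering, your argument should cite that explicit description (or Lemma 3.3 of \cite{Li1} directly) rather than Lemma 4.1 alone, since Lemma 4.1 as stated asserts only the existence of an isomorphism, not its formula. Your structural preliminaries ($L \leqslant H_1$, $H_0 \leqslant L$, normality inherited from $H_0 \lhd H_1$, and $K/G_0 \leqslant G_1/G_0$ via $G_0 \lhd G_1$) are all sound and match what the paper's citation implicitly requires.
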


\begin{proof}
We can check that $L$ is a subgroup of $H_1$ as we did in the proof of Lemma 3.3 in \cite{Li1}. Since $H_0 \lhd H_1$ by that lemma, we know $H_0 \lhd L$. The fact that $L / H_0 \cong K / G_0$ can be shown as in Lemma 3.3 of \cite{Li1}, too.
\end{proof}

Thus $H_0 \lhd L \leqslant H_1 \leqslant H$ and $G_0 \lhd K \leqslant G_1 \leqslant G$. We then construct a subcategory $\mathcal{E}$ as follows:
\begin{equation*}
\xymatrix{ \mathcal{E}: & x \ar@(ld,lu)[]|{K} \ar @<1ex>[rr] ^{L\alpha K} \ar@<-1ex>[rr] ^{\ldots} & & y \ar@(rd,ru)[]|{L}}
\end{equation*}
The reader can check that both $L$ and $K$ act transitively on $\mathcal{E} (x, y)$ since $L \alpha K = L \alpha = \alpha K$ by the definitions of $K$ and $L$.

\begin{lemma}
Let $\mathcal{E}$ be constructed as above and suppose that both $G$ and $H$ act transitively on $\mathcal{C} (x, y)$. Then $k\mathcal{E} \mid (_{k\mathcal{E}} k\mathcal{C} _{k\mathcal{E}})$. In particular, $k\mathcal{C}$ is of infinite representation type if so is $k\mathcal{E}$.
\end{lemma}

\begin{proof}
Choose a representative from each coset in $K \backslash G$: $g_1 = 1, \ldots, g_m$. Since both $G$ and $H$ acts transitively on $\mathcal{C} (x, y)$, we know that $G= G_1$ and $H = H_1$. Moreover, $G_1/G_0 \cong H_1 / H_0$. For each $g_i$, $1 \leqslant i \leqslant m$, there is some $h_i$ such that $\alpha g_i = h_i \alpha$. We claim that $Lh_i \alpha \cap Lh_j \alpha = \emptyset$, and hence $L h_i \cap L h_j = \emptyset$, $1 \leqslant i \neq j \leqslant m$. Indeed, since $L h_i \alpha = L \alpha g_i = \alpha K g_i$ and $L h_j \alpha = L \alpha g_j = \alpha K g_j$. If the intersection is not empty, there exist $u, v \in K$ such that $\alpha u g_i = \alpha v g_j$, i.e, $\alpha = \alpha v g_j g_i^{-1} u^{-1}$. Therefore, $v g_j g_i^{-1} u^{-1} \in G_0 \leqslant K$, and hence $g_j g_i^{-1} \in K$. This is impossible since $g_i$ and $g_j$ are in different cosets.

Note that $\{ h_i \} _{i=1} ^m$ is a set of representatives of the cosets $L \backslash H$ since $G / G_0 \cong H /H_0$ and $K / G_0 \cong L/ H_0$. Clearly,
\begin{equation*}
\Mor \mathcal{C} = G \sqcup H \sqcup H \alpha = (\bigsqcup _{i=1}^m Kg_i) \sqcup (\bigsqcup _{j=1}^m Lh_j) \sqcup (\bigcup _{j = 1}^m Lh_j \alpha).
\end{equation*}
The vector space spanned by $K \sqcup L\alpha \sqcup L$ is precisely $k\mathcal{E}$. Let $R$ be the vector space spanned by $(\sqcup _{i=2}^m Kg_i) \sqcup (\sqcup _{j=2}^m Lh_j) \sqcup (\sqcup _{j=2}^m Lh_j \alpha)$. Then $R$ is a left $k\mathcal{E}$-module, and hence $_{k\mathcal{E}} k\mathcal{C} = k\mathcal{E} \oplus R$. We also have $k\mathcal{C} _{k\mathcal{E}} = k\mathcal{E} \oplus R$. Indeed, since
\begin{equation*}
\sqcup _{j=2}^m Lh_j \alpha K = \sqcup _{j=2}^m Lh_j L \alpha = \sqcup _{j=2}^m L h_j \alpha,
\end{equation*}
we check that $R$ is a right $k\mathcal{E}$-module as well. This proves the first statement. The second one follows from Proposition 2.3.
\end{proof}

Given a subcategory $\mathcal{D}$ of $\mathcal{C}$ as pictured below and $N \in k\mathcal{D}$-mod, where $G' = \mathcal{D} (x, x)$ and $H' = \mathcal{D} (y, y)$.
\begin{equation*}
\xymatrix{ \mathcal{D}: & x \ar@(ld,lu) []|{G'} \ar @<1ex>[rr] ^{H' \alpha G'} \ar@<-1ex>[rr] ^{\ldots} & & y \ar@(rd,ru) []|{H'}}.
\end{equation*}
We want to describe the structure of the induced module $N \uparrow _{\mathcal{D}} ^{\mathcal{C}}$. As described in Proposition 3.1, $N$ is determined by a linear map $\varphi: V \rightarrow W$ where $V = N(x)$ and $W = N(y)$ are a $kG'$-module and a $kH'$-module respectively. In general, $N \uparrow _{\mathcal{D}} ^{\mathcal{C}} (x) \ncong V \uparrow _{G'}^G$ and $N \uparrow _{\mathcal{D}} ^{\mathcal{C}} (y) \ncong W \uparrow _{H'}^H$. Indeed, we have already seen in Example 2.2 that the dimension of the induced module can be even smaller than that of the original module. In the following example we see that the induced module is isomorphic to the original one.

\begin{example}
Let $\mathcal{C}$ be a finite EI category such that: $\Ob \mathcal{C} = \{ x, y\}$, $G = 1$, $H = \langle h \rangle$ is a cyclic group of order $2 = \Char (k)$, and $\mathcal{C} (x, y) = \{ \alpha \}$. Let $\mathcal{D}$ be the subcategory formed by removing $h \in H$. Let $\varphi: V \rightarrow W$ be a representation $R$ of $\mathcal{D}$, where $V = \langle v \rangle \cong k$, $W = \langle w \rangle \cong k$ and $\varphi (v) = w$. By direct computation we get $R \uparrow _{\mathcal{D}} ^{\mathcal{C}} (x) \cong V$ and $R \uparrow _{\mathcal{D}} ^{\mathcal{C}} (y) \cong W \ncong W \uparrow _{1}^H$:
\begin{equation*}
h \otimes _{k \mathcal{D}} w = h \otimes _{k\mathcal{D}} \alpha \cdot v = h \alpha \otimes _{k\mathcal{D}} v = \alpha \otimes _{k\mathcal{D}} v = 1_y \otimes _{k\mathcal{D}} \alpha \cdot v = 1_y \otimes w.
\end{equation*}
\end{example}

If the pair $(\mathcal{C}, \mathcal{D})$ satisfies some extra property, we can explicitly describe the structure of the induced module $R \uparrow _{\mathcal{D}} ^{\mathcal{C}}$ by a linear map between two induced group representations.

\begin{lemma}
Suppose that $H$ acts transitively on $\mathcal{C} (x, y)$. Let $\mathcal{D}$ be a subcategory of $\mathcal{C}$ pictured as above and $R$ be a $k\mathcal{D}$-module of the form $R(\alpha) = \varphi: V \rightarrow W$. If $\Stab _H (\alpha G') \leqslant H'$, then $\tilde{R} = R \uparrow _{\mathcal{D}} ^{\mathcal{C}}$ has the form
\begin{equation*}
\tilde{R} (\alpha) = \tilde{\varphi}: kG \otimes _{kG'} V \rightarrow kH \otimes _{kH'} W, \quad g \otimes v \mapsto h \otimes \varphi (v),
\end{equation*}
where $h \in H$ such that $\alpha g = h\alpha$.
\end{lemma}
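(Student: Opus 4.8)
The plan is to identify $\tilde R = R\uparrow_{\mathcal D}^{\mathcal C}$ with the explicit representation $S$ of $\mathcal C$ given by $S(x)=kG\otimes_{kG'}V$, $S(y)=kH\otimes_{kH'}W$ and $S(\alpha)=\tilde\varphi$, and to realize the identification through the adjunction between induction $\uparrow_{\mathcal D}^{\mathcal C}$ and restriction $\downarrow_{\mathcal D}^{\mathcal C}$ rather than by a direct manipulation of the tensor product. The single fact on which the hypothesis acts is the following: if $g'\in G'$ and $h\in H$ satisfy $h\alpha=\alpha g'$, then $h\alpha G'=\alpha g'G'=\alpha G'$, so $h\in\Stab_H(\alpha G')\leqslant H'$; in particular $H_0=\Stab_H(\alpha)\leqslant H'$. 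First I would use this to verify that $\tilde\varphi\colon g\otimes v\mapsto h\otimes\varphi(v)$ (with $\alpha g=h\alpha$) is \emph{well defined}, which is exactly where the hypothesis bites and exactly what fails in Example 6.4: the element $h$ is determined modulo $H_0$, and since $H_0\leqslant H'$ acts trivially on $\varphi(v)=\alpha v$ the image is unambiguous, while the $kG'$-balancing reduces to $\varphi(g'v)=h\varphi(v)$ for $g'\in G'$ with $\alpha g'=h\alpha$, $h\in H'$, which holds because $R$ is a $k\mathcal D$-module. Transitivity of $H$, i.e. $G_1=G$, guarantees that for every $g\in G$ some $h$ with $\alpha g=h\alpha$ exists, so $\tilde\varphi$ is defined on all of $kG\otimes_{kG'}V$; a coherence check of the same kind as in Proposition 6.1 then shows $S$ is a genuine representation.

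Next I would build the comparison map via adjunction: since $\Hom_{k\mathcal C}(\tilde R,S)\cong\Hom_{k\mathcal D}(R,S\downarrow_{\mathcal D}^{\mathcal C})$, it suffices to give a $k\mathcal D$-homomorphism $\theta\colon R\to S\downarrow_{\mathcal D}^{\mathcal C}$, namely $v\mapsto 1\otimes v$ on $V$ and $w\mapsto 1\otimes w$ on $W$. Checking that $\theta$ respects the actions of the algebra generators $G'$, $H'$ and $\alpha$ is then routine, and the resulting $\Theta\colon\tilde R\to S$, $c\otimes r\mapsto c\,\theta(r)$, is automatically a morphism of representations.

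It remains to see that $\Theta$ is bijective on each object. At $x$ this is immediate: since $\mathcal C(y,x)=\emptyset$, the only morphisms into $x$ are the automorphisms, so $1_x k\mathcal C=kG$ with $kH'$ and $k(H'\alpha G')$ acting as zero on the right, giving $\tilde R(x)=1_x k\mathcal C\otimes_{k\mathcal D}R\cong kG\otimes_{kG'}V$, on which $\Theta_x$ is the identity. At $y$ I would first reduce the generators: $1_y k\mathcal C$ is spanned by $H$ and $H\alpha$, and $(h\alpha)\otimes v=h\cdot(\alpha\otimes v)=h\cdot(1_y\otimes\varphi(v))=h\otimes\varphi(v)$, so $\tilde R(y)$ is spanned by the elements $h\otimes w$ and $\Theta_y$ is surjective onto $kH\otimes_{kH'}W$. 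For injectivity I would write down the map $\Psi\colon kH\otimes_{kH'}W\to\tilde R(y)$, $h\otimes w\mapsto h\otimes w$, which is well defined because $H'\subseteq k\mathcal D$, and observe that $\Theta_y\Psi$ and $\Psi\Theta_y$ are the identity on the respective spanning sets; hence $\Theta_y$ is an isomorphism, and since $\Theta$ is a morphism of representations the map $\tilde R(\alpha)$ is carried precisely to $\tilde\varphi$.

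The step I expect to be the genuine obstacle is the well-definedness of $\tilde\varphi$, equivalently the existence of $S$, equivalently the absence of the collapse seen in Example 6.4, where $\Stab_H(\alpha G')\not\leqslant H'$ and $\tilde R(y)$ is strictly smaller than $W\uparrow_{H'}^H$. Everything downstream — the adjunction, the two coset-free identifications at $x$ and $y$, and the inverse $\Psi$ — is formal once $S$ exists. As a cross-check I would keep in reserve the alternative of decomposing $k\mathcal C$ as a right $k\mathcal D$-module by a biset argument in the style of Lemmas 6.3 and 6.5 and reading off $\tilde R(y)$ directly, but I expect the adjunction route to be shorter and to make the role of the hypothesis most transparent.
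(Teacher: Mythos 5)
Your proof is correct, and its first half is the same as the paper's: the paper also begins by checking that $\tilde\varphi$ determines a representation, with the hypothesis entering exactly where you say it does --- if $h\alpha = \alpha g'$ with $g' \in G'$ then $h \in \Stab_H(\alpha G') \leqslant H'$, and in particular $H_0 \leqslant H'$ acts trivially on $\varphi(v)$, which is what makes the choice of $h$ immaterial modulo the tensor relations (your ``Example 6.4'' is the paper's Example 6.6, and your diagnosis of what fails there is exactly right). Where you genuinely diverge is the identification $\tilde R \cong S$. The paper constructs the comparison map by hand: it defines a bilinear $\pi\colon k\mathcal{C} \times R \to \tilde R$ on basis elements, checks $k\mathcal{D}$-balancedness (invoking the hypothesis a second time), obtains a surjective $k\mathcal{C}$-homomorphism $\tilde\pi$, and then closes by a dimension count, using transitivity of $H$ to bound $\dim_k R\uparrow_{\mathcal{D}}^{\mathcal{C}}(x) \leqslant |G:G'|\dim_k V$ and $\dim_k R\uparrow_{\mathcal{D}}^{\mathcal{C}}(y) \leqslant |H:H'|\dim_k W$, so that the surjection is forced to be an isomorphism. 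You instead obtain the same map from the adjunction unit $\theta\colon r \mapsto 1 \otimes r$ (legitimate here because $\Ob\mathcal{D} = \Ob\mathcal{C}$, so $k\mathcal{D}$ is a unital subalgebra and $k\mathcal{C}\otimes_{k\mathcal{D}}-$ is left adjoint to restriction) and then invert it objectwise: at $x$ via $1_xk\mathcal{C} = kG$ killing the $W$-component, and at $y$ via the explicit $\Psi\colon h \otimes w \mapsto h \otimes_{k\mathcal{D}} w$, well defined since $kH' \subset k\mathcal{D}$, with both composites the identity on spanning sets. This buys two things: the hypothesis is consumed exactly once (in the existence of $S$), which makes its role maximally transparent, and no dimension count is needed, so injectivity does not have to be squeezed out of surjectivity plus an inequality. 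The paper's version is shorter to write down and gets both halves of bijectivity in one stroke. Note that transitivity of $H$ remains indispensable on your route in two places --- every $g \in G$ must admit $h$ with $\alpha g = h\alpha$ (i.e.\ $G_1 = G$) for $\tilde\varphi$ to be defined on all of $kG\otimes_{kG'}V$, and $H\alpha G = H\alpha$ is needed for the spanning claim at $y$ --- and you correctly use it in both.
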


\begin{proof}
We first check that $\tilde{\varphi}$ indeed determines a $k\mathcal{C}$-module. Since $kG \otimes _{kG'} V$ and $kH \otimes _{kH'} W$ have a natural $kG$-module structure and a natural $kH$-module structure respectively, it is enough to prove that $h_1 \tilde{\varphi} g_1$ and $h_2 \tilde{\varphi} g_2$ coincide as linear maps from $kG \otimes _{kG'} V$ to $kH \otimes _{kH'} W$ if $h_1 \alpha g_1=h_2 \alpha g_2$, $g_1, g_2 \in G$, $h_1, h_2 \in H$.

Take $g \otimes _{kG'} v \in kG \otimes_{kG'} V$. Since $H$ acts transitively, we can find some $h \in H$ such that $h\alpha = \alpha g$. Moreover, $h_1 \alpha g_1=h_2 \alpha g_2$ implies $\alpha g_1 = h_1^{-1} h_2 \alpha g_2$, so $\alpha g_1 g = h_1^{-1} h_2 \alpha g_2 g$. Therefore,
\begin{equation*}
h_1 \tilde{\varphi} g_1 (g \otimes _{kG'} v) = h_1 \tilde{\varphi} (g_1g \otimes _{kG'} v) = h_1 h \otimes _{kH'} \varphi(v),
\end{equation*}
where $h \alpha = \alpha g_1 g = h_1^{-1} h_2 \alpha g_2 g$.
On the other hand, we also have
\begin{equation*}
h_2 \tilde{\varphi} g_2 (g \otimes _{kG'} v) = h_2 \tilde{\varphi} (g_2g \otimes _{kG'} v) = h_2 h' \otimes _{kH'} \varphi(v),
\end{equation*}
where $h'\alpha = \alpha g_2 g$. Thus $h \alpha = h_1^{-1} h_2 \alpha g_2 g = h_1^{-1} h_2 h' \alpha$, and $\alpha  = h^{-1} h_1^{-1}h_2 h' \alpha$. Consequently, $h^{-1} h_1^{-1}h_2 h' \in H_0 \lhd H' \leqslant H_1$. By Proposition 3.1, $H_0$ acts trivially on $\varphi(v)$, so we have
\begin{equation*}
h_1 h \otimes _{kH'} \varphi(v) = h_1h \otimes _{kH'} h^{-1} h_1^{-1}h_2 h' \varphi(v) = h_2 h' \otimes _{kH'} \varphi(v)
\end{equation*}
as required. Therefore, $\tilde{\varphi}$ is a well defined representation of $\mathcal{C}$.

Now Let $\{v_i\}_{i=1}^{m}$ and $\{w_j\}_{i=1}^{n}$ be bases of $V$ and $W$ respectively. Note that every morphism in $\mathcal{C} (x, y)$ can be written as $h \alpha$ for some $h \in H$. We define a $k$-linear map $\pi: k\mathcal{C}\times R \rightarrow \tilde{R}$ by:
\begin{align*}
(g,v_i) \mapsto g \otimes _{kG'} v_i, \quad (h \alpha, v_i) \mapsto h \otimes _{kH'} \varphi(v_i), \quad (h,w_j) \mapsto h \otimes _{kH'} w_j\\
(h,v_i) \mapsto 0, \quad (\alpha, w_j) \mapsto 0, \quad (g,w_j)\mapsto 0
\end{align*}
for $g\in G, h\in H, 1 \leqslant i \leqslant m, 1 \leqslant j \leqslant n$.
The reader can check that $\pi$ is $k\mathcal{D}$-balanced using either the condition $\Stab _H (\alpha G') \leqslant H'$, so it induces the following $k\mathcal{D}$-homomorphism $\tilde{\pi}: R\uparrow_{\mathcal{D}}^{\mathcal{C}} \rightarrow \tilde{R}$:
\begin{equation*}
g \otimes _{k\mathcal{D}} v_i \mapsto g \otimes _{kG'} v_i, \quad h \otimes _{k\mathcal{D}} w_j \mapsto h \otimes _{kH'} w_j
\end{equation*}
for $g\in G, h\in H, 1 \leqslant i \leqslant m, 1 \leqslant j \leqslant n$, as shown by the following commutative diagram:
\begin{equation*}
\xymatrix{
g \otimes _{k\mathcal{D}} v_i \ar@{|-{>}}[r] \ar@ {|-{>}}[d] ^{\widetilde {\pi_{x}}} & \alpha g \otimes _{k\mathcal{D}} v_i = h \alpha \otimes_ {k\mathcal{D}} v_i = h \otimes _{k\mathcal{D}} \varphi(v_i) \ar@ {|-{>}}[d] ^{\widetilde {\pi_y}} \\
g \otimes _{kG'} v_i \ar@ {|-{>}}[r]_ -{\tilde {\varphi}} & \tilde{\varphi} (g \otimes _{kG'} v_i) = h \otimes _{kH'} \varphi(v_i)
}
\end{equation*}

Note that $\tilde{\pi}$ is not only a $k\mathcal{D}$-module homomorphism, but also a $k\mathcal{C}$-module homomorphism. Furthermore, from our definition of $\pi$ we find that all basis elements of $\tilde{R}$ are images under $\pi$. By the universal property of tensor product, they are actually images under $\tilde{\pi}$. Therefore, $\tilde{\pi}$ is a surjective $k\mathcal{C}$-module homomorphism.

On the other hand, $R \uparrow _{\mathcal{D}} ^{\mathcal{C}} (x)$ is spanned by basic tensors $g \otimes _{k\mathcal{D}} v_i$, $1 \leqslant i \leqslant m$. Since $kG' \subset k\mathcal{D}$, it is actually spanned by the set $\{ g_t \otimes _{k\mathcal{D}} v_i \mid 1 \leqslant t \leqslant l, 1 \leqslant i \leqslant m \}$, where $\{ g_t \} _{t=1}^{l}$ is a set of representatives of the left cosets $G/G'$. It turns out that
\begin{equation*}
\text{dim}_k (R \uparrow _{\mathcal{D}} ^{\mathcal{C}} (x)) \leqslant |G: G'| \text{dim}_k V = \text{dim}_k \tilde{R}(x).
\end{equation*}

Similarly, since $H$ acts transitively on $\mathcal{C} (x,y)$, any morphism $\beta$ in $\mathcal{C} (x, y)$ can be written as $h \alpha$ for some $h \in H$. Thus $\beta (g \otimes _{k\mathcal{D}} v_i) = h \alpha g \otimes _{k\mathcal{D}} v_i = h h' \otimes _{k\mathcal{D}} \varphi(v_i)$ where $\alpha g = h' \alpha$. Therefore, $R \uparrow _{\mathcal{D}} ^{\mathcal{C}} (y)$ is spanned by basic tensors $h \otimes _{k\mathcal{D}} w_j$, $1 \leqslant j \leqslant n, h \in H$. Since $kH' \subset k\mathcal{D}$, it is actually spanned by the set $\{ h_s \otimes _{k\mathcal{D}} w_j \mid 1 \leqslant s \leqslant r, 1 \leqslant j \leqslant n \}$, where $\{ h_s \} _{s=1}^r$ is a set of representatives of the cosets $H/H'$. Therefore,
\begin{equation*}
\text{dim}_k (R \uparrow _{\mathcal{D}} ^{\mathcal{C}} (y)) \leqslant |H: H'| \text{dim}_k W = \text{dim}_k \tilde{R}(y).
\end{equation*}

By the above two inequalities, we have $\dim_k (R \uparrow _{\mathcal{D}} ^{\mathcal{C}}) \leqslant \dim_k \tilde{R}$, so $\tilde{\pi}$ is actually an isomorphism.
\end{proof}

We will use the following result to get a family of finite EI categories whose category algebras are of finite representation type.

\begin{proposition}
Let $\mathcal{D}$ be as in the previous lemma. Suppose that both $|G : G'|$ and $|H : H'|$ are invertible in $k$. If the pair $(\mathcal{C}, \mathcal{D})$ satisfies one of the following conditions:
\begin{enumerate}
\item $G$ acts transitively on $\mathcal{C} (x, y)$;
\item $H' = H$,
\end{enumerate}
then $M$ is isomorphic to a direct summand of $M \downarrow _{\mathcal{D}} ^{\mathcal{C}} \uparrow _{\mathcal{D}} ^{\mathcal{C}}$ for $M \in k\mathcal{C}$-mod. In particular, $k\mathcal{C}$ has finite representation type if so does $k\mathcal{D}$.
\end{proposition}

\begin{proof}
The second statement follows from Proposition 2.3, so we only need to show the first statement. Firstly we consider a special case. That is, $G$ acts transitively on $\mathcal{C} (x, y)$ and $G' = \Stab _G (H' \alpha)$. Observe that by this given condition $G_0 \lhd G_1 = G$, $H_1 \lhd H_1 = H$, and $H / H_0 \cong G / G_0$. Moreover, by Lemma 3.4, $G' / G_0 \cong H' / H_0$. Take $\{ g_i \} _{i=1}^n$ as a set of representatives of the cosets $G / G'$ and choose $h_i \in H$ satisfying $\alpha g_i = h_i \alpha$. Then $\{ h_i \}_{i=1}^n$ is a set of representatives of the cosets $H / H'$, and $\alpha g_i G' = h_iH' \alpha$. Note that $n$ is invertible.

Suppose that $M$ has the form $\varphi: V \rightarrow W$ where: $V = M(x)$ is a $kG$-module, $W = M(y)$ is a $kH$-module, and $\varphi = M(\alpha)$. By the previous lemma, $M \downarrow _{\mathcal{D}} ^{\mathcal{C}} \uparrow _{\mathcal{D}} ^{\mathcal{C}}$ has the form
\begin{equation*}
\tilde {\varphi}: kG \otimes_{kG'} V \rightarrow kH \otimes _{kH'} W, \quad g \otimes v \mapsto h \otimes \varphi (v),
\end{equation*}
where $h \alpha  = \alpha g$.

Define the following maps:
\begin{align*}
\theta_V: kG \otimes _{kG'} V \rightarrow V, \quad g \otimes v \mapsto gv;\\
\delta_V: V \rightarrow kG \otimes _{kG'} V, \quad v \mapsto \frac{1}{n} \sum_{i=1}^n g_i \otimes g_i^{-1} v;\\
\theta_W: kH \otimes _{kH'} W \rightarrow W, \quad h \otimes w \mapsto hw;\\
\delta_W: W \rightarrow kH \otimes _{kH'} W, \quad w \mapsto \frac{1}{n} \sum_{i=1}^n h_i \otimes h_i^{-1} w.
\end{align*}
and consider the diagram
\begin{equation*}
\xymatrix{
kG \otimes _{kG'} V \ar[r] _{\tilde {\varphi}} \ar@<0.5ex>[d] ^{\theta_V} & kH \otimes _{kH'} W \ar@<0.5ex>[d] ^{\theta_W}\\
V \ar[r]_{\varphi} \ar@<0.5ex> [u]^{\delta_V} & W \ar@<0.5ex> [u]^{\delta_W}.}
\end{equation*}
We can show that the diagram commutes. Indeed, for $g \in G$ and $v \in V$,
\begin{align*}
\varphi \theta_V (g \otimes v) & = \varphi (gv) = (\alpha g) \cdot v = (h \alpha) \cdot v\\
& = h \varphi (v) = \theta_W (h \otimes \varphi(v)) = \theta_W \tilde{\varphi} (g \otimes v)
\end{align*}
where $\alpha g = h \alpha$; and
\begin{align*}
\tilde{\varphi} \delta_V (v) & = \frac{1}{n} \sum_{i=1}^n \tilde{\varphi} (g_i \otimes g_i^{-1} v) = \frac{1}{n} \sum_{i=1}^n h_i \otimes \varphi (g_i^{-1} v) = \frac{1}{n} \sum_{i=1}^n h_i \otimes (\alpha g_i^{-1}) \cdot v\\
& = \frac{1}{n} \sum_{i=1}^n h_i \otimes h_i^{-1} \alpha \cdot v = \frac{1}{n} \sum_{i=1}^n h_i \otimes h_i^{-1} \varphi(v) = \delta_W \varphi(v).
\end{align*}
But $\theta_V \delta_V = 1_V$ and $\theta_W \delta_W = 1_W$. Therefore, $M \mid M \downarrow _{\mathcal{D}} ^{\mathcal{C}} \uparrow _{\mathcal{D}} ^{\mathcal{C}}$.\\

Now suppose that $\mathcal{D}$ satisfies the second condition, i.e., $H' = H$. Thus $\mathcal{D} (x, y) = H \alpha = \mathcal{C} (x, y)$. By the previous lemma, $M \downarrow _{\mathcal{D}} ^{\mathcal{C}} \uparrow _{\mathcal{D}} ^{\mathcal{C}}$ has the form
\begin{equation*}
\tilde {\varphi}: kG \otimes_{kG'} V \rightarrow kH \otimes _{kH} W \cong W, \quad g \otimes v \mapsto h \otimes \varphi (v) = 1 \otimes h \varphi(v),
\end{equation*}
where $\alpha g = h \alpha$.

Let $\{ g_i \} _{i=1}^n$ be a set of representatives of the cosets $G / G'$. Define $\theta_V$ and $\delta_V$ as in the first case. We can show the following diagram commutes:
\begin{equation*}
\xymatrix{
kG \otimes _{kG'} V \ar[r] _{\tilde {\varphi}} \ar@<0.5ex>[d] ^{\theta_V} & kH \otimes _{kH} W \ar@<0.5ex>[d] ^{\cong}\\
V \ar[r]_{\varphi} \ar@<0.5ex> [u]^{\delta_V} & W \ar@<0.5ex> [u]^{\cong}.}
\end{equation*}
So the conclusion follows as in the above special case.

In the situation that $\mathcal{D}$ satisfies the first condition, i.e., $G$ acts transitively on $\mathcal{C} (x, y)$, we define another subcategory $\mathcal{E}$ of $\mathcal{C}$ as follows:, where $\hat{G} = \Stab _{G} (H' \alpha)$:
\begin{equation*}
\xymatrix{ \mathcal{E}: & x \ar@(ld,lu)[]| {\hat{G}} \ar @<1ex>[rr] ^{H' \alpha \hat{G}} \ar@<-1ex>[rr] ^{\ldots} & & y \ar@(rd,ru)[]|{H'}}.
\end{equation*}
Note that both $\hat{G}$ and $H'$ act transitively on $\mathcal{E} (x, y)$, so $\Stab_H (\alpha \hat{G}) = H'$.

The pair $(\mathcal{C}, \mathcal{E})$ falls into the special case we consider at the beginning, so we have $M \mid M \downarrow _{\mathcal{E}} ^{\mathcal{C}} \uparrow _{\mathcal{E}} ^{\mathcal{C}}$. The pair $(\mathcal{E}, \mathcal{D})$ satisfies the second condition, so we have $(M \downarrow _{\mathcal{E}} ^{\mathcal{C}}) \mid (M \downarrow _{\mathcal{E}} ^{\mathcal{C}}) \downarrow _{\mathcal{D}} ^{\mathcal{E}} \uparrow _{\mathcal{D}} ^{\mathcal{E}}$. In conclusion, we get $M \mid M \downarrow _{\mathcal{E}} ^{\mathcal{C}} \downarrow _{\mathcal{D}} ^{\mathcal{E}} \uparrow _{\mathcal{D}} ^{\mathcal{E}} \uparrow _{\mathcal{E}} ^{\mathcal{C}}$, i.e., $M \mid M \downarrow _{\mathcal{D}} ^{\mathcal{C}} \uparrow _{\mathcal{D}} ^{\mathcal{C}}$. This finishes the proof.
\end{proof}

\subsection{Transitivity of group actions.}

Our main task in this subsection is to prove the following proposition, which is  condition (2) in Theorem 1.1.

\begin{proposition}
If neither $G$ nor $H$ acts transitively on $\mathcal{C} (x, y)$, then $k\mathcal{C}$ is of infinite representation type.
\end{proposition}

This conclusion has been proved in \cite{Li1} for the case that both $|G|$ and $|H|$ are invertible. We prove the fact for the general case. Define a finite EI category $\mathcal{Q}$ as follows, where $\Stab_G (\bar{\alpha}) = G_1$ and $\Stab _H (\bar {\alpha}) = H_1$.
\begin{equation*}
\xymatrix{ \mathcal{Q}: & x \ar@(ld,lu)[]|{G} \ar @<1ex>[rr] ^{H \bar{\alpha} G} \ar@<-1ex>[rr] ^{\ldots} & & y \ar@(rd,ru)[]|{H}}.
\end{equation*}

We claim that $\mathcal{Q}$ is a quotient category of $\mathcal{C}$. Indeed, let $F: \mathcal{C} \rightarrow \mathcal{Q}$ be the functor such that $F$ is the identity map on objects and automorphisms, and sends $h \alpha g$ to $h \bar{\alpha} g$ for $h \in H$ and $g \in G$. If $h_1 \alpha g_1 = h_2 \alpha g_2$, $h_1, h_2 \in H$, $g_1, g_2 \in G$, then $h_2^{-1} h_1 \alpha = \alpha g_2 g_1^{-1}$, so $h_2^{-1} h_1 \in H_1$ and $g_2 g_1^{-1} \in G_1$. Therefore, $h_2^{-1} h_1 \bar{\alpha} = \bar{\alpha} g_2 g_1^{-1} = \bar{\alpha}$. In other words, $h_1 \bar{\alpha} g_1 = h_2 \bar{\alpha} g_2$. Thus $F$ is a well defined quotient functor, and $\mathcal{Q}$ is a quotient category of $\mathcal{C}$. Therefore, it suffices to show the infinite representation type of $k\mathcal{Q}$. The strategy is to use permutation modules $M = k \uparrow _{G_1}^G$ and $N = k \uparrow _{H_1}^H$ to construct infinitely many distinct indecomposable representations (up to isomorphism) for $\mathcal{Q}$. Since neither $G$ nor $H$ acts transitively on $\mathcal{C} (x, y)$, we know that $G_1 \neq G$ and $H_1 \neq H$, so $\dim _k M >1$ and $\dim_k N > 1$.

Note that $M$  has a unique indecomposable summand $M_0$ which is characterized by one of the following properties:
\begin{enumerate}
\item $\Hom_{kG} (M_0, k) \cong k$, i.e, $\Top(M_0)$ has a unique summand isomorphic to $k$;
\item $\Hom_{kG} (k, M_0) \cong k$, i.e, $\Soc(M_0)$ has a unique summand isomorphic to $k$.
\end{enumerate}
Moreover, $M_0$ is isomorphic to its dual, and $\Top (M_0) \cong \Soc(M_0)$. This module is called a \textit{Scott module}. In one extreme situation that $H_1$ contains a Sylow $p$-subgroup of $H$, $M_0 \cong k$. If $|H_1|$ is invertible, then $M_0 \cong P_k$, the projective cover of $k$. For more details, see \cite{Broue, Green}. In particular, for a finite group with cyclic Sylow $p$-subgroups (as we study in this paper), the structure of all Scott modules has been described in \cite{Takahashi} by considering the Brauer graphs. Similarly, $N = k \uparrow _{H_1}^H$ has a Scott module $N_0$.

\begin{proof}
We prove the conclusion by considering the permutation modules $M$ and $N$. There are three cases:\\

\textbf{Case I:} Both $M$ and $N$ have at least two indecomposable summands. Take an indecomposable summand $M_1$ of $M$ which is not the Scott module, and let $S$ be a simple summand of $\Soc (M_1)$. Clearly, $S \ncong k$. Moreover, since $\Hom_{kG} (S, M_1) \neq 0$, we deduce that $\Hom _{kG_1} (S \downarrow _{G_1}^G, k) \cong \Hom _{kG} (S, M) \neq 0$, so $\Top (S \downarrow _{G_1}^G)$ has a simple summand $k_1 \cong k$. Dually, we can find a simple $kH$-module $T \ncong k$ such that $\Soc (T \downarrow _{H_1}^H)$ has a simple summand $k_2 \cong k$.

As vector spaces, $S = k_1 \oplus S'$ and $T = k_2 \oplus T'$ ($S'$ and $T'$ might be 0, but this is fine). Construct a family of representations $R_d$ of $\mathcal{Q}$ as follows, $0 \neq d \in k$. First, $R_d (x) = k \oplus S = k \oplus (k_1 \oplus S')$, $R_d(y) = k \oplus T = k \oplus (k_2 \oplus T')$. The linear map $R_d(\bar{\alpha})$ is defined by:
\begin{equation*}
\xymatrix {k \oplus k_1 \oplus S' \ar[rrr] ^{\begin{bmatrix} 1 & 1 & 0 \\ 1 & d & 0 \\ 0 & 0 & 0 \end{bmatrix}} & & & k \oplus k_2 \oplus T'}
\end{equation*}
We check that this linear map $R_d (\bar{\alpha})$ indeed gives rise to a representation of $\mathcal{Q}$ by Proposition 3.1, and that $R_d$ is indecomposable by computing its endomorphism algebra. Moreover, if $R_b \cong R_d$, we have the following matrix identity:
\begin{equation*}
\begin{bmatrix} 1 & 1 & 0 \\ 1 & d & 0 \\ 0 & 0 & 0 \end{bmatrix} \begin{bmatrix} u & 0 & 0 \\ 0 & v & 0 \\ 0 & 0 & v \end{bmatrix} = \begin{bmatrix} \lambda & 0 & 0 \\ 0 & t & 0 \\ 0 & 0 & t \end{bmatrix} \begin{bmatrix} 1 & 1 & 0 \\ 1 & b & 0 \\ 0 & 0 & 0 \end{bmatrix},
\end{equation*}
where $b, d, u, t, v, \lambda \in k$ are nonzero scalars. By computation, we get $b = d$. In this way we construct infinitely many pairwise non-isomorphic indecomposable representations of $\mathcal{Q}$ illustrated as follows, where dotted arrows means that these maps are actually only defined on some subspaces.
\begin{equation*}
\xymatrix{k \ar[rr]^1 \ar@{-->}[drr]^1 & & k \\ S \ar@{-->} [rr]^d \ar@{-->} [urr]_1 & & T.}
\end{equation*}

\textbf{Case II:} One of $M$ and $N$ is indecomposable and the other one is not. Without loss of generality we assume that $M$ is indecomposable. Then $M \ncong k$ is the Scott module. Its length is at least 2; and both $\Top (M)$ and $\Soc (M)$ has a unique composition factor isomorphic to $k$. Let $k_1$ be this composition factor in $\Top(M)$.

Observe that $\dim_k \End _{kG} (M) \geqslant 2$ since there is a nilpotent map $\varphi: M \rightarrow M$ such that $\varphi(M) \cong k$ is a simple summand of $\Soc(M)$. We conclude that
\begin{equation*}
\text{dim}_k \Hom _{kG_1} (M \downarrow _{G_1} ^G, k) = \text{dim}_k \Hom _{kG} (M, k \uparrow _{G_1}^G) = \text{dim}_k \Hom _{kG} (M, M) \geqslant 2.
\end{equation*}
This fact also follows from
\begin{align*}
\text{dim} _k \Hom _{kG_1} (M \downarrow _{G_1} ^G, k) & = \text{dim}_k \Hom _{kG_1} (k \uparrow _{G_1}^G \downarrow _{G_1}^G, k)\\
& = \text{dim} _k \Hom _{kG_1} (\bigoplus _{s \in G_1 \backslash G / G_1} (s \otimes k) \uparrow _{G_1 \cap sG_1s^{-1}} ^{G_1}, k).
\end{align*}
The last dimension equals the number of double cosets $G_1 \backslash G /G_1$, which is at least 2. Therefore, there are at least two summands isomorphic to $k$ in $\Top (M \downarrow _{G_1}^G)$. Take a summand $k_2 \cong k$ in $\Top (M \downarrow _{G_1}^G)$ which is different from $k_1$.

Write $M = k_1 \oplus k_2 \oplus M'$ as vector spaces. Take a simple $kH$-module $T$ as we did in Case I. Note that $T \downarrow _{H_1} ^H = k_3 \oplus T'$ where $k_3 \cong k$. We construct a class of representations $R_d$ in the following way, $0 \neq d \in k$. First, $R_d (x) = M$, $R_d(y) = k \oplus T$. The map $R_d(\bar{\alpha})$ is defined by
\begin{equation*}
\xymatrix {k_1 \oplus k_2 \oplus M' \ar[rrr] ^{\begin{bmatrix} 1 & 1 & 0 \\ 1 & d & 0 \\ 0 & 0 & 0 \end{bmatrix}} & & & k \oplus k_3 \oplus T'}.
\end{equation*}
Again, by Proposition 3.1, $R_{d} (\bar{\alpha})$ gives rise to a representation of $\mathcal{Q}$. It is indecomposable since $R(x) = M$ is an indecomposable $kG$-module. Moreover, we check that $R_{d} \cong R_b$ if and only if $d = b$. In this way we construct infinitely many pairwise non-isomorphic indecomposable representations of $\mathcal{Q}$.\\

\textbf{Case III:} Both $M$ and $N$ are indecomposable. As in Case II, there exist at least two summands $k_1 \cong k_2 \cong k$ in $\Top (M \downarrow _{G_1} ^G)$, where $k_1$ is the restriction of the simple summand $k$ in $\Top(M)$ to $G_1$. Dually, there exist at least two summands $k_3 \cong k_4 \cong k$ in $\Soc (N \downarrow _{H_1} ^H)$, where $k_3$ is the restriction of the simple summand $k$ in $\Soc(N)$ to $H_1$.

Write $M = k_1 \oplus k_2 \oplus M_1$ and $N = k_3 \oplus k_4 \oplus N_1$ as vector spaces. Then the following map $\varphi$
\begin{equation*}
\xymatrix {k_1 \oplus k_2 \oplus M_1 \ar[rrr] ^{\begin{bmatrix} 1 & 1 & 0 \\ 1 & d & 0 \\ 0 & 0 & 0 \end{bmatrix}} & & & k_3 \oplus k_4 \oplus N_1}
\end{equation*}
gives rise to a representation $R_d$ of $\mathcal{Q}$ by Proposition 3.1. It is obviously indecomposable. Moreover, letting $d \neq 0$ vary in $k$, we check similarly that $R_d \cong R_b$ if and only if $d = b$.
\end{proof}

This proposition implies Theorem 3.35 in \cite{Dietrich1}, which asserts that if both $H$ and $G$ are nontrivial and $\mathcal{C} (x, y) \cong H \times G$ as a $(H, G)$-biset, then $k\mathcal{C}$ is of infinite representation type.

\subsection{Actions of $p$-subgroups.}

In this subsection we set $\Char(k) = p \geqslant 5$. It is well known that the representation type of a finite group is completely determined by its Sylow $p$-subgroups. For the finite EI category $\mathcal{C}$, the Sylow $p$-subgroups of $G$ and $H$ still play an important role in determining the representation type of $k\mathcal{C}$ as illustrated by condition (3) in Theorem 1.1, which will be proved in this subsection.

By the previous proposition, throughout this subsection without loss of generality we assume that \textbf{$H$ acts transitively on $\mathcal{C} (x, y)$}. Observe that the condition that $H$ has a $p$-subgroup acting nontrivially on $\mathcal{C} (x, y)$ is equivalent to one of the following conditions:
\begin{enumerate}
\item $H$ has a Sylow $p$-subgroup acting nontrivially on $\mathcal{C} (x, y)$;
\item all Sylow $p$-subgroups of $H$ act nontrivially on $\mathcal{C} (x, y)$;
\item $O^{p'} H$, the normal subgroup generated by all Sylow $p$-subgroups of $H$, is not contained in $H_0$;
\item there is some $h \in H$ with order a power of $p$ such that $h\alpha \neq \alpha$.
\end{enumerate}
Moreover, if $G$ has a $p$-subgroup acting nontrivially on $\mathcal{C} (x, y)$, so does $H$. Indeed, since $H$ acts transitively, $G_0 \lhd G_1 = G$. Consequently, $G_0$ does not contain a Sylow $p$-subgroup of $G$ since otherwise $O^{p'}G \leqslant G_0$, contradicting the given condition. Thus $p$ divides $| G_1/ G_0 = |H_1/H_0|$, so $H_0$ does not contain a Sylow $p$-subgroup of $H_1 \leqslant H$. In conclusion, $H_1$ and $H$ have a $p$-subgroup acting nontrivially on $\mathcal{C} (x, y)$.

\begin{proposition}
Suppose that $\Char(k) = p \geqslant 5$. If both $G$ and $H$ have $p$-subgroups acting nontrivially on $\mathcal{C} (x, y)$, then $k\mathcal{C}$ is of infinite representation type.
\end{proposition}

\begin{proof}
We have assumed that $H$ acts transitively on $\mathcal{C} (x, y)$. Consequently, $G = G_1$. Define a subcategory $\mathcal{D}$ of $\mathcal{C}$ as follows:
\begin{equation*}
\xymatrix{ \mathcal{D}: & x \ar@(ld,lu)[]|{G} \ar @<1ex>[rr] ^{H_1 \alpha G} \ar@<-1ex>[rr] ^{\ldots} & & y \ar@(rd,ru)[]|{H_1}}
\end{equation*}
By Lemma 3.3, it suffices to show the infinite representation type of $k\mathcal{D}$.

We claim that both $G$ and $H_1$ act transitively on $\mathcal{D} (x, y) = H_1 \alpha G$. Indeed, by the definition of $H_1$, $H_1 \alpha \subseteq \alpha G$, so $H_1 \alpha G = \alpha G$, i.e., $G$ acts transitively on $\mathcal{D} (x, y)$. On the other hand, since $H$ acts transitively on $\mathcal{C} (x, y)$, for every $g \in G$, there exists some $h \in H$ with $h \alpha = \alpha g$. But again by the definition of $H_1$, $h \in H_1$. Therefore, $\alpha G \subseteq H_1 \alpha$. This proves the claim.

Let $\bar{G} = G/ G_0$ and $\bar{H} = H_1 /H_0$. Since $\bar{G} \cong \bar{H}$, we identify these two groups. Moreover, $p$ divides $| \bar{G} | = | \bar{H} |$. Indeed, since $G$ has a $p$-subgroup acting nontrivially on $\mathcal{C} (x, y)$, we can find some $g \in G$ with order a power of $p$ such that $\alpha g \neq \alpha$, i.e., $g \notin G_0$. Therefore, its image $\bar{g} \in \bar{G}$ has order a power of $p$, so $p$ divides $| \bar{G} |$.

Define another finite EI category $\mathcal{E}$ as follows:
\begin{equation*}
\xymatrix{ \mathcal{E}: & x \ar@(ld,lu) []|{\bar{G}} \ar @<1ex>[rr] ^{\bar{H} \bar{\alpha} \bar{G}} \ar@<-1ex>[rr] ^{\ldots} & & y \ar@(rd,ru) []|{\bar{H}}}
\end{equation*}
where $\bar{G}$ acts regularly on $\mathcal{E}(x, y)$ from the right side and $\bar{H}$ acts regularly on it from the left side. The reader can check that $\mathcal{E}$ is a quotient category of $\mathcal{D}$ by factoring out $G_0$ and $H_0$. Consequently, it is enough to show the infinite representation type of $k\mathcal{E}$.

Since $p$ divides $| \bar{G} |$, we can take a nontrivial $p$-subgroup $P \leqslant \bar{G}$. Correspondingly, $Q = \Stab _{\bar{H}} (\bar{\alpha} P) \leqslant \bar{H}$, and $P \cong Q$. Let $\mathcal{F}$ be the following subcategory of $\mathcal{E}$:
\begin{equation*}
\xymatrix{ \mathcal{F}: & x \ar@(ld,lu) []|{P} \ar @<1ex>[rr] ^{ Q \bar{\alpha} P} \ar@<-1ex>[rr] ^{\ldots} & & y \ar@(rd,ru) []|{Q}}.
\end{equation*}
By Theorem 5.3 in \cite{Dietrich2}, $k\mathcal{F}$ is of infinite representation type. By Lemma 3.5, $k\mathcal{E}$ is of infinite representation type as well. This finishes the proof.
\end{proof}

By the remark before this proposition, if $G$ has a $p$-subgroup acting nontrivially on $\mathcal{C} (x, y)$, so does $H$ since it acts transitively. Consequently, $k\mathcal{C}$ has infinite representation type. Thus in the rest of this subsection we assume that \textbf{$O^{p'}G$ acts trivially on $\mathcal{C} (x, y)$}, i.e., $O^{p'}G \leqslant G_0$. By this assumption, we know that $p$ does not divide $|G_1 : G_0| = |H_1 : H_0|$. Therefore, for every Sylow $p$-subgroup $P \leqslant H$, $P \cap H_0 = P \cap H_1$. In particular, $p$ divides $|H_1|$ if and only if it divides $|H_0|$, and $O^{p'} H \leqslant H_0$ if and only if $O^{p'} H \leqslant H_1$.

\begin{lemma}
Suppose that $G= 1$ and $p$ divides $|H : H_0|$. If $\dim _k \End _{kH} (P_k) \geqslant 6$ where $P_k$ is the projective cover of the simple $kH$-module $k$, then $k\mathcal{C}$ is of infinite representation type.
\end{lemma}

The condition that $p$ divides $|H : H_0|$ implies that there is a Sylow $p$-subgroup $P \leqslant H$ acting nontrivially on $\mathcal{C} (x, y)$. But the converse statement is not true. Indeed, $p$ does not divide $|H: H_0|$ if and only if $H_0$ contains a Sylow $p$-subgroup of $H$, weaker than the condition $O^{p'} \leqslant H_0$, which by the above remark is equivalent to saying that a Sylow $p$-subgroup of $H$ acting trivially on $\mathcal{C} (x, y)$. We also remind the reader that $P_k$ can also be viewed as a projective $k\mathcal{C}$-module and $\End_ {k\mathcal{C}} (P_k) \cong \End _{kH} (P_k)$.

\begin{proof}
Consider the projective $k\mathcal{C}$-module $P_k \oplus k\mathcal{C} 1_x$ and $\Lambda = \End _{k\mathcal{C}} (P_k \oplus k\mathcal{C} 1_x) ^{\textnormal{op}}$. By Proposition 2.5 on page 36 in \cite{Auslander}, $\Lambda$-mod is equivalent to a subcategory of $k\mathcal{C}$-mod. Thus it suffices to show the infinite representation type of $\Lambda$. We have
\begin{equation*}
\End _{k\mathcal{C}} (P_k) \cong k[t] / (t^d), \quad \End _{k\mathcal{C}} (k\mathcal{C} 1_x) \cong 1_x k\mathcal{C} 1_x \cong k, \quad \Hom _{k\mathcal{C}} (k\mathcal{C} 1_x, P_k) =0,
\end{equation*}
where $d = \dim_k \End _{kH} (P_k)$, and
\begin{equation*}
\text{dim}_k \Hom _{k\mathcal{C}} (P_k, k\mathcal{C} 1_x) = \text{dim}_k \Hom _{kH} (P_k, kH \alpha) = \text{dim}_k \Hom _{kH} (P_k, k\uparrow _{H_0}^H).
\end{equation*}
Let $M$ be the Scott module of $k\uparrow _{H_0}^H$. Since $p$ divides $|H : H_0|$, $M \ncong k$. Therefore, $M$ has at least two composition factors isomorphic to $k$. Consequently,
\begin{equation*}
t = \text{dim}_k \Hom _{k\mathcal{C}} (P_k, k\mathcal{C} 1_x) \geqslant  \text{dim}_k \Hom _{kH} (P_k, M) \geqslant 2.
\end{equation*}
Thus $\Lambda$ is isomorphic to the path algebra of the following quiver with relations $\delta^d = 0$ and $\delta^t \beta = 0$ for some $t \geqslant 2$:
\begin{equation*}
\xymatrix{ \bullet \ar[r] ^{\beta} & \bullet \ar@(rd,ru) []|{\delta}}.
\end{equation*}
From Bongartz's list in \cite{Bautista, Bongartz} we conclude that $\Lambda$ and hence $k\mathcal{C}$ are of infinite representation type if $d \geqslant 6$.
\end{proof}

Before stating and proving the next two propositions, let us do some reduction. Define a finite EI category $\mathcal{G}$ as follows:
\begin{equation*}
\xymatrix{ \mathcal{G}: & x \ar@(ld,lu)[]|{1} \ar @<1ex>[rr] ^{H \bar{\alpha} } \ar@<-1ex>[rr] ^{\ldots} & & y \ar@(rd,ru)[]|{H}}.
\end{equation*}
where $\Stab_H (\bar{\alpha}) = H_1$. Let $F: \mathcal{C} \rightarrow \mathcal{G}$ be the functor defined by: $F(g) = 1$, $F(h) = h$, $F(h \alpha g) = h \bar{\alpha}$ for $g \in G$ and $h \in H$. The reader can check that $F$ is well defined. Therefore, $\mathcal{G}$ is a quotient category of $\mathcal{C}$. Moreover, if a Sylow $p$-subgroup $P \leqslant H$ acts nontrivially on $\mathcal{C} (x, y)$, then it acts nontrivially on $\mathcal{G} (x, y)$ as well. This conclusion comes from the fact that $O^{p'} H \leqslant H_0$ if and only if $O^{p'} H \leqslant H_1$, see the remark before this lemma and the equivalent conditions introduced at the beginning of this subsection.

\begin{proposition}
Suppose that $\Char(k) = p \geqslant 5$. If $G$ (resp., $H$) has a $p$-subgroup $P$ (resp., $Q$) such that $1 \neq P \cap G_0 \neq G_0$ (resp., $1 \neq Q \cap H_0 \neq H_0$), then $k\mathcal{C}$ is of infinite representation type.
\end{proposition}

\begin{proof}
By the reduction before the previous lemma, we know $O^{p'}G \leqslant G_0$. Therefore, we only consider the case that a $p$-subgroup $Q \leqslant H$ such that $Q \cap H_0 \neq 1$ and $Q \nleqslant H_0$, i.e., $Q\alpha \neq \{ \alpha \}$ and $\Stab _Q (\alpha) \neq 1$. Moreover, since $p$ does not divide $|H_1/H_0| = |G_1/G_0|$, we conclude that $1 \neq Q \cap H_0 = Q \cap H_1$ and $Q \nleqslant H_1$. Consequently, the quotient category $\mathcal{G}$ defined above satisfies the given condition as well since $\Stab _{H} (\bar{\alpha}) = H_1$. Therefore, it suffices to show the infinite representation type of $k\mathcal{G}$. Observe that $p$ divides both $|H_1|$ and $|H : H_1|$, so the Scott module $N$ of $k \uparrow _{H_1}^H$ is neither isomorphic to $k$ nor projective. We have two cases:\\

\textbf{Case I:} $\Soc (N)$ contains a simple summand $S \ncong k$. Since $\Soc(N) \cong \Top(N)$ (see \cite{Takahashi}), $\Top (N)$ contains a summand isomorphic to $S$. Thus
\begin{equation*}
0 \neq \Hom _{kH} (N, S) \subseteq \Hom _{kH} (k \uparrow _{H_1}^H, S) \cong \Hom _{kH_1} (k, S \downarrow _{H_1}^H),
\end{equation*}
we conclude that $\Soc (S \downarrow _{H_1}^H)$ contains a summand $k_1$ isomorphic to $k$.

Write $N = k \oplus S \oplus N' = k \oplus (k_1 \oplus S') \oplus N'$ as vector spaces, where both $k$ and $S$ are simple summands in $\Soc(N)$. Note that $N$ is indecomposable, $k \ncong S$, and they both are contained in $\Soc (N)$. Therefore, every non-invertible endomorphism on $N$ maps $k \oplus S$ to 0.

Construct a class of representations $R_d$ of $\mathcal{G}$ in the following way, $0 \neq d \in k$. First, $R_d (x) = k$, $R_d(y) = N$. The map $R_d(\alpha)$ is defined by
\begin{equation*}
\xymatrix {k \ar[rr] ^-{\begin{bmatrix} 1 \\ d \\ 0 \\ 0 \end{bmatrix}} & & k \oplus (k_1 \oplus S') \oplus M'}.
\end{equation*}
By Proposition 3.1, this determines a representation of $\mathcal{G}$. It is indecomposable. Moreover, we check that $R_b \cong R_d$ if and only if $d = b$. In this way we construct infinitely many pairwise non-isomorphic indecomposable representations of $\mathcal{G}$.\\

\textbf{Case II:} $\Soc(N) \cong \Top (N) \cong k$. In this situation $N$ is a proper quotient module of the projective $kH$-module $P_k$. In particular, the multiplicity $[P_k, k] > [N: k] \geqslant 2$. Therefore, in the Brauer graph $\Gamma$ of the principal block $B_0 (kH)$, there is an exceptional vertex to which the edge $k$ is adjacent.

Let $m$ be the multiplicity of this exceptional vertex and $e$ be the number of edges in $\Gamma$. Then $e \mid (p-1)$ and $em = |D| -1$, where the defect group $D$ of $B_0 (kH)$ is a Sylow $p$-subgroup of $kH$ (see \cite{Alperin, Benson}). By the given condition, $|D| \geqslant p^2$. Therefore, $m \geqslant p + 1$, so $P_k$ has at least $p+2$ composition factors isomorphic to $k$. Consequently, $\dim_k \End _{kH} (P_k) \geqslant p+2 \geqslant 7$. The conclusion follows from Lemma 3.11 (note that $\Stab _{H} (\bar{\alpha})$ is $H_1$ instead of $H_0$ for $\mathcal{G}$).
\end{proof}

Actually, we have shown the following conclusion in the above proof:

\begin{corollary}
Suppose that $\Char(k) = p \neq 2, 3$. If $k\mathcal{C}$ is of finite representation type, then the Scott module of $k \uparrow _{H_0}^H$ (or $k \uparrow _{H_1}^H$) is either $k$ or projective.
\end{corollary}

\begin{proof}
This is clearly true for $p = 0$. Thus we assume $p \geqslant 5$. Since $p$ does not divide $|H_1 : H_0|$ by the assumption we made before, the Scott module of $k \uparrow _{H_0}^H$ is isomorphic to that of $k \uparrow _{H_1}^H$, see \cite{Broue, Green, Takahashi}. By the above proposition, $H_0$ either contains a Sylow $p$-subgroup of $H$ or only has a trivial $p$-subgroup. In the first case the Scott module is isomorphic to $k$, and in the second case it is isomorphic to the the uniserial projective module $P_k$.
\end{proof}

\subsection{Permutation modules}

In this subsection we study the structure of permutation modules to develop a few more criteria. By previous results, we can assume that $H$ \textbf{acts transitively on} $\mathcal{C} (x, y)$. Therefore, $G_1 = G$. We identify $k(G_1/G_0)$ and $k(H_1/H_0)$. \textbf{We do not suppose that $O^{p'} G$ acts trivially on this biset}, so $|H_1 : H_0| = |G_1 : G_0|$ might not be invertible in $k$, and hence $k \uparrow _{H_0}^{H_1} \cong k (H_1 /H_0)$ might not be semisimple. We will find in many situations the representation type of $k\mathcal{C}$ is determined by the structure of permutation module $k \uparrow _{H_1}^H$.

The following lemma will be used frequently.

\begin{lemma}
Let $M$ be a $kH$-module. Then $\Hom _{kH} (M, k \uparrow _{H_1}^H) \neq 0$ if and only if $k$ is a summand of $\Top (M \downarrow _{H_1}^H)$. Dually, $\Hom _{kH} (k \uparrow _{H_1}^H, M) \neq 0$ if and only if $k$ is a summand of $\Soc (M \downarrow _{H_1}^H)$.
\end{lemma}

\begin{proof}
Use Frobenius reciprocity.
\end{proof}

The following proposition generalizes Corollary 6.5 in \cite{Li1}, where $kH$ is supposed to be semisimple.

\begin{proposition}
If $k\mathcal{C}$ is of finite representation type, then the following conditions must be true:
\begin{enumerate}
\item $\Top (k \uparrow _{H_1}^H)$ has no repeated summands;
\item every indecomposable summand $M$ of $k \uparrow _{H_1}^H$ is uniserial or biserial, and if $M$ is biserial, it is projective;
\item if $M \ncong N$ are indecomposable summands of $k \uparrow _{H_1}^H$, then $\Hom _{kH} (M, N) = 0$.
\end{enumerate}
\end{proposition}

\begin{proof}
Suppose that a simple $kH$-module $U$ appears at least twice in $\Top (k \uparrow _{H_1}^H)$. Therefore, by Frobenius reciprocity $k$ appears at least twice in $\Soc (U \downarrow _{H_1}^H)$. Take two summands $k_1 \cong k \cong k_2$ in $\Top (U \downarrow _{H_1}^H)$ and write $U = k_1 \oplus k_2 \oplus U'$ as vector spaces. Define a family of representations $R_d$ for $\mathcal{C}$ as follows, $0 \neq d \in k$:
\begin{equation*}
\xymatrix{ \varphi_d: k \ar[rr] ^-{\begin{bmatrix} 1 \\ d \\ 0 \end{bmatrix}} & & k_1 \oplus k_2 \oplus U'}.
\end{equation*}
As we did in the proof of Proposition 3.12, we can check that $R_d$ is indecomposable, and that $R_d \cong R_b$ if and only if $b = d$. Thus (1) must be true.

We show that every indecomposable summand $M$ of $k \uparrow _{H_1}^H$ has a simple top. This implies (2). Indeed, since $M$ has a simple top, it is a quotient module of an indecomposable projective $kH$-module, and hence must be uniserial or biserial (see \cite{Alperin}). Because $k \uparrow _{H_1}^H$ is isomorphic to its dual module, $M$ must has a simple socle. Therefore, if $M$ is biserial, it must be isomorphic to its projective cover.

Suppose that $\Top (M)$ contains more than one simple summands. Take $U$ and $V$ from $\Top (M)$. By (1), $U$ is not isomorphic to $V$. By the previous lemma and (1), $U \downarrow _{H_1}^H$ contains exactly a simple summand $k_1 \cong k$. Similarly, $V \downarrow _{H_1}^H$ contains exactly a simple summand $k_2 \cong k$ as well.

Let $\Omega M$ be the first syzygy of $M$. Then $\Omega M \neq 0$ since $M$ is indecomposable and its top is not simple. Moreover, $U$ and $V$ appears as summands of $\Soc (\Omega M)$. Therefore, as we did in the proof of (1), using $\Omega M$ we can construct a family of infinitely many non-isomorphic indecomposable representations of $\mathcal{C}$. This contradiction shows that $M$ must have a simple top, and (2) is proved.

Now we turn to (3). Suppose that $\Hom_{kH} (M, N) \neq 0$. By (2) we can suppose that $M$ (resp., $N$) is a quotient module of an indecomposable projective module $P_U$ (resp., $P_V$). Therefore, if $\Hom _{kH} (M, N) \neq 0$, the multiplicity $[N: U] \neq 0$, so $[P_V, U] \neq 0$. This happens if and only if $U$ and $V$ lie in the same block $B$ of $kH$, and $U$ and $V$ are connected to the same vertex in the Brauer graph $\Gamma$ of $B$. Therefore, $\Gamma$ has a piece as follows:
\begin{equation*}
\xymatrix { & & \\ & \bullet \ar@{-} [u]^{\ldots} \ar@{-} [r]^U \ar@{-} [l]^V \ar@{-} [d]^{\ldots} & & \\ & & }
\end{equation*}
So we can define a uniserial $kH$-module $L$ such that $\Top (L) \cong \Top (N) \cong V$, $\Soc (L) \cong \Soc (M) \cong U$, and $[L : U] = [L : V] = 1$. Therefore, $L$ is a quotient module of $P_V$, and it is actually a quotient module of $N$.

Clearly, $\Hom_{kH} (N, L) \neq 0 \neq \Hom_{kH} (M, L)$. Therefore,
\begin{equation*}
\dim_k \Hom_{kH_1} (S, L \downarrow _{H_1}^H) = \dim_k \Hom_{kH} (S \uparrow _{H_1}^H, L) \geqslant 2.
\end{equation*}
So we can find two simple summands $S_1 \cong S \cong S_2$ in $\Soc (L \downarrow _{H_1}^H)$. Note that $\End _{kH} (N) \cong k$. Using this module $L$, as we did in the proof of (1) we can construct infinitely many indecomposable non-isomorphic representations of $\mathcal{C}$. The conclusion of (3) follows from this contradiction.
\end{proof}

In the following lemma we give some algebras of infinite representation types. They will be used in the proof of the next proposition.

\begin{lemma}
The path algebras of the following quivers with relations are of infinite representation type:
\begin{align*}
& \xymatrix{ \bullet \ar@(ld,lu)[]|{t} & \bullet \ar[l]^{\gamma} \ar[r]_{\beta} & \bullet & & t^3 = 0,}\\
& \xymatrix{ \bullet \ar@(ld,lu)[]|{t} & \bullet \ar[l]^{\gamma} \ar[r]_{\beta} & \bullet \ar@(rd,ru)[]|{s} & & t^2 = s^2 = 0,}\\
& \xymatrix{ \bullet & & \\ & \bullet \ar[ul]^{\theta} \ar[r]_{\beta} \ar[ld] ^{\gamma} & \bullet \ar@(rd,ru)[]|{t} & & t^2 = 0. \\ \bullet & & }
\end{align*}
\end{lemma}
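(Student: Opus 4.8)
The plan is to treat the three path algebras separately, since only the middle one is a string algebra; write $\Lambda_1,\Lambda_2,\Lambda_3$ for them. For $\Lambda_2$ I would argue directly by string combinatorics, while for $\Lambda_1$ and $\Lambda_3$ I would pass to a covering. The reason for the split is that the special biserial conditions fail for $\Lambda_1$ and $\Lambda_3$: at the loop of $\Lambda_1$ one has both $t^2\notin I$ and $t\gamma\notin I$, so $t$ can be prolonged in two ways, and in $\Lambda_3$ the central vertex is a source of valency three. In either case band modules are not available, so a different tool is needed.

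For $\Lambda_2$ I would first check that, with $I=(t^2,s^2)$, it is a string algebra in the sense of \cite{Butler}: every vertex has at most two arrows in and two out, and each arrow prolongs to a nonzero path in at most one way on each side, the only nonzero length-two paths at the loops being $t\gamma$ and $s\beta$. Then I would exhibit the cyclic reduced walk $b=\beta^{-1}s\beta\gamma^{-1}t\gamma$ based at the middle vertex and verify it is a band: it is reduced, it avoids the relations $t^2,s^2$, it uses both a direct and an inverse letter, and it is not a proper power. By the classification of indecomposables over string algebras, each scalar $\lambda\in k^{\times}$ gives a band module $M(b,\lambda)$; these are indecomposable and pairwise non-isomorphic up to the finitely many rotations and the inversion of $b$, so $\Lambda_2$ has infinitely many indecomposables.

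For $\Lambda_1$ and $\Lambda_3$ I would use covering theory (\cite{Bongartz}). Each quiver carries a single loop and is otherwise a tree, so the relevant fundamental group is $\mathbb{Z}$; I would grade the loop by $1$, all other arrows by $0$, and form the associated $\mathbb{Z}$-Galois cover. In the cover of $\Lambda_1$ the loop unfolds to a line $a_i\xrightarrow{t_i}a_{i+1}$ with relations $t_{i+2}t_{i+1}t_i=0$, together with pendants $\gamma_i\colon b_i\to a_i$ and $\beta_i\colon b_i\to c_i$; the full convex subcategory on $\{a_0,a_1,a_2,b_0,b_1,b_2,c_0,c_1,c_2\}$ carries no induced relation (the longest surviving $t$-path has length two) and is the hereditary algebra of the star $T_{3,3,2}$, which is of wild type. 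In the cover of $\Lambda_3$ the loop unfolds to $c_i\xrightarrow{t_i}c_{i+1}$ with $t_{i+1}t_i=0$, and the full convex subcategory on $\{d_0,e_0,b_0,c_0,c_1,b_1,d_1,e_1\}$ is hereditary with underlying graph the Euclidean diagram $\widetilde{D}_7$. A representation-infinite full convex subcategory yields, by extension by zero, infinitely many indecomposables of the cover, so the covers are not locally representation-finite; by the covering theorem the base algebras $\Lambda_1$ and $\Lambda_3$ are of infinite representation type.

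The hard part will be the bookkeeping for $\Lambda_1$ and $\Lambda_3$: one must set up the Galois cover correctly (the arrow weights and the induced relations), verify that the chosen finite subquivers are genuinely convex so that extension by zero is exact and reflects indecomposability, and confirm that they carry no induced relation, so that they really are the hereditary algebras of the stated non-Dynkin graphs. As an alternative that avoids coverings, each $\Lambda_i$ can instead be shown to contain, as a convex subcategory, one of the minimal representation-infinite algebras on Bongartz's list \cite{Bautista, Bongartz}, exactly as was done for the one-loop algebras in the proof of Lemma 6.11; or one can mimic the explicit one-parameter families $R_d$ used earlier in the paper, realizing the tame family of $\widetilde{D}_7$ for $\Lambda_3$ and a one-parameter family of the wild algebra $T_{3,3,2}$ for $\Lambda_1$.
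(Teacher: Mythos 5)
Your proof is correct, and for the first algebra it is essentially the paper's own argument: the paper likewise unrolls the loop along a $\mathbb{Z}$-grading and exhibits precisely your nine-vertex convex subquiver of the cover --- the tree whose branch vertex has arms of edge-lengths $3,3,2$, which strictly contains $\widetilde{E}_7$ and is therefore representation-infinite --- and then invokes the covering theorem of \cite{Bongartz}. The genuine difference lies in the other two algebras: the paper proves only the first case and explicitly ``leaves the other two to the reader,'' suggesting the same covering technique was intended there (unrolling both loops of the second quiver, and the single loop of the third, where your $\widetilde{D}_7$-shaped convex subcategory on $\{d_0,e_0,b_0,c_0,c_1,b_1,d_1,e_1\}$ is exactly the kind of verification being gestured at). For the second algebra your route via string combinatorics is a different and arguably better tool: after checking the string-algebra axioms --- which, as you correctly observe, fail for the first quiver (since $t^2\notin I$ and $t\gamma\notin I$ give two prolongations of $t$) and for the third (a source of valency three) --- the band $\beta^{-1}s\beta\gamma^{-1}t\gamma$ hands you an explicit one-parameter family of pairwise non-isomorphic indecomposable band modules, which is self-contained and matches the explicit-family spirit of the $R_d$ constructions elsewhere in the paper, whereas a covering argument certifies infinitude only indirectly. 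Two points worth tightening in a final write-up: state the covering-theory direction you actually use (representation-finiteness of the base algebra forces local representation-finiteness of the $\mathbb{Z}$-cover, the group acting freely), since that is the step doing the work for $\Lambda_1$ and $\Lambda_3$; and flag that your $T_{3,3,2}$ counts arm lengths in edges (in the vertex-counting convention it is $T_{4,4,3}$) --- though since the tree contains $\widetilde{E}_7$, infinite representation type follows at once without deciding tame versus wild.
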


\begin{proof}
These results can be proved by the covering theory. As example, we show the first algebra is of infinite representation type, and leave the other two to the reader. A universal covering of the first quiver is described as follows with relation $t^3 = 0$:
\begin{equation*}
\xymatrix{
\ldots & \bullet & \bullet & \bullet & \bullet & \bullet & \ldots\\
\ldots & \bullet \ar[u]^{\beta} \ar[d]_{\gamma} & \bullet \ar[u]^{\beta} \ar[d]_{\gamma} & \bullet \ar[u]^{\beta} \ar[d]_{\gamma} & \bullet \ar[u]^{\beta} \ar[d]_{\gamma} & \bullet \ar[u]^{\beta} \ar[d]_{\gamma} & \ldots\\
\ldots \ar[r]^t & \bullet \ar[r]^t & \bullet \ar[r]^t & \bullet \ar[r]^t & \bullet \ar[r]^t & \bullet \ar[r]^t & \ldots
}
\end{equation*}
This covering is not locally representation-finite since it contains a subquiver of infinite representation type as below:
\begin{equation*}
\xymatrix{
\bullet & \bullet & \bullet \\
\bullet \ar[u]^{\beta} \ar[d]_{\gamma} & \bullet \ar[u]^{\beta} \ar[d]_{\gamma} & \bullet \ar[u]^{\beta} \ar[d]_{\gamma} \\
\bullet \ar[r]^t & \bullet \ar[r]^t & \bullet
}
\end{equation*}
Therefore, the first algebra is of infinite representation type.
\end{proof}

\begin{proposition}
If $k\mathcal{C}$ is of finite representation type, then $\dim_k \End _{kH} (k \uparrow _{H_1}^H) \leqslant 3$. That is, $H_1 \backslash H / H_1$ has at most three double cosets.
\end{proposition}

The second statement comes from
\begin{align*}
\Hom _{kH} (k \uparrow _{H_1}^H, k \uparrow _{H_1}^H) & \cong \Hom _{kH_1} (k \uparrow _{H_1}^H \downarrow _{H_1}^H, k)\\
& \cong \bigoplus _{s \in H_1 \backslash H / H_1} \Hom _{kH_1} ((s \otimes k) \uparrow _{H_1 \cap s H_1 s^{-1}} ^{H_1}, k)\\
& \cong \bigoplus _{s \in H_1 \backslash H / H_1} \Hom _{k(H_1 \cap s H_1 s^{-1})} (s \otimes k, k).
\end{align*}
So $\dim_k \End _{kH} (k \uparrow _{H_1}^H)$ coincides with the number of double cosets of $H_1 \backslash H / H_1$.

\begin{proof}
Note that $\mathcal{C}$ has a quotient category $\mathcal{G}$ (defined before Proposition 3.12) such that $\mathcal{G} (x, x) = 1$, $\mathcal{G} (y, y) = H$, and $\mathcal{C} (x, y) = H \bar{\alpha}$ where $\Stab_H (\bar{\alpha}) = H_1$. Therefore, it suffices to show that $k\mathcal{G}$ is of infinite representation type if $\dim_k \End _{kH} (k \uparrow _{H_1}^H) \geqslant 4$.

Denote all indecomposable summands of $k \uparrow _{H_1}^H$ by $M_1, M_2, \ldots, M_n$. By Proposition 3.15, each of them is a quotient module of an indecomposable projective module of $kH$. Let $e_1, e_2, \ldots, e_n$ be primitive idempotent elements in $kH$ such that $P_i = kHe_i$ is a projective cover of $M_i$, $1 \leqslant i \leqslant n$. By Proposition 6.4, we can assume the following properties: $n \leqslant 3$; $\Hom_{kH} (P_i, M_j) = 0$, $1 \leqslant i \neq j \leqslant n$. Consider the algebra
\begin{equation*}
\Lambda = \End _{k\mathcal{G}} (k\mathcal{G} (1_x + e_1 \ldots + e_n)) ^{\textnormal{op}} \cong (e_1 + \ldots + e_n + 1_x) k\mathcal{G} (e_1 + \ldots + e_n + 1_x).
\end{equation*}
We only need to show that $\Lambda$ is of infinite representation type if $\dim_k \End _{kH} (k \uparrow _{H_1}^H) \geqslant 4$ since by Proposition 2.5 on page 36 in \cite{Auslander} $\Lambda$-mod is equivalent to a subcategory of $k\mathcal{G}$-mod.

Observe that $1_x k\mathcal{G} (e_1 + \ldots e_n) = 0$, $1_x k\mathcal{G} 1_x \cong k$, and
\begin{align*}
(e_1 + \ldots + e_n) k\mathcal{G} 1_x & = (e_1 + \ldots + e_n) kH \bar{\alpha}\\
& \cong \Hom_{kH} (kH(e_1 + \ldots + e_n), kH \bar{\alpha})\\
& \cong \Hom_{kH} (kH(e_1 + \ldots + e_n), k \uparrow _{H_1}^H)\\
& \cong \Hom_{kH} (kH(e_1 + \ldots + e_n), M_1 \oplus \ldots \oplus M_n)\\
& \cong \Hom_{kH} (P_1 + \ldots + P_n, M_1 \oplus \ldots \oplus M_n)\\
& \cong \bigoplus _{i=1}^n \Hom_{kH} (P_i, M_i)
\end{align*}
as left $(e_1 + \ldots + e_n) k\mathcal{G} (e_1 + \ldots + e_n)$-modules since $\Hom _{k\mathcal{G}} (P_i, M_j) = 0$ for $1 \leqslant i \neq j \leqslant n$. By the same reasoning, the product of an element in the two-sided $\Lambda$-ideal generated by $\oplus _{1 \leqslant i \neq j \leqslant n} \Hom_{kH} (P_i, P_j)$ and an element in $(e_1 + \ldots + e_n) k\mathcal{G} 1_x$ is 0. We also have:
\begin{equation*}
\dim_k \Hom_{kH} (kHe_i, M_i) = \dim_k \End_{kH} (M_i).
\end{equation*}
Therefore, modulo the two-sided $\Lambda$-ideal generated by $\oplus _{1 \leqslant i \neq j \leqslant n} \Hom_{kH} (P_i, P_j)$, $\Lambda$ has a quotient algebra $\bar{\Lambda}$ which is isomorphic to the path algebra of the following bounded quiver
\begin{equation*}
\xymatrix{ & \bullet \ar@(ul,ur)[]|{t_1} \\ \bullet \ar@(ld,lu)[]|{t_n} & \bullet \ar[u]^{\beta_1} \ar[r]_{\beta_{\ldots}} \ar[l] ^{\beta_n} & \bullet \ar@(rd,ru)[]|{t_{\ldots}} }
\end{equation*}
with relations $t_i^{a_i} = 0 = t_i^{d_i} \beta_i$, where $a_i \geqslant d_i = \dim_k \End _{kH} (M_i)$, $1 \leqslant i \leqslant n$.

Suppose that $d = \dim_k \End _{kH} (k \uparrow _{H_1}^H) \geqslant 4$. Note that $d = \sum _{i=1}^n d_i$. We have several cases:

\textbf{Case I:} $n = 1$. Therefore, $k \uparrow _{H_1}^H $ is an indecomposable $kH$-module, and $\bar{\Lambda}$ is isomorphic to the path algebra of the following quiver with relation $t^a = 0 = t^d \beta$, $a \geqslant d$. we deduce that $\bar{\Lambda}$ is of infinite representation type from Bongartz's list in \cite{Bautista, Bongartz} if $d \geqslant 4$.
\begin{equation*}
\xymatrix{ \bullet \ar[r]^{\beta} & \bullet \ar@(ru,rd)[]|{t}}
\end{equation*}

\textbf{Case II:} $n = 2$, so $d_1 + d_2 = d \geqslant 4$. If one of them is 1, then $\bar{\Lambda}$ has the first algebra in the previous lemma as a quotient algebra; if both numbers are at least 2, then $\bar{\Lambda}$ has the second algebra in the previous lemma as a quotient algebra.

\textbf{Case III:} $n = 3$, so $d_1 + d_2 + d_3 \geqslant 4$. Therefore, at least one of them must be bigger than 1, and $\bar{\Lambda}$ has the third algebra in the previous lemma as a quotient algebra.

Consequently, in all cases we get infinite representation type for $\bar{\Lambda}$. This contradiction tells us that $d \leqslant 3$.
\end{proof}

\section{Classifications of representation types.}

As in the previous section $\mathcal{C}$ is a connected skeletal finite EI category with two objects $x$ and $y$ such that $\mathcal{C} (y, x) = \emptyset$. Let $\alpha$, $G_0 \lhd G_1 \leqslant G$ and $H_0 \lhd H_1 \leqslant H$ as defined before. In this section we use the criteria developed in the previous section to classify the representation type of $\mathcal{C}$. We point out that this classification is not complete. However, it covers most cases.

When both $|G|$ and $|H|$ are invertible, we can use the algorithm described in \cite{Li1} to construct the ordinary quiver $Q$ of $k\mathcal{C}$. Moreover, we proved in that paper that $k\mathcal{C}$ is Morita equivalent to $kQ$. Therefore, the representation type of $k\mathcal{C}$ can be determined by Gabriel's theorem.

When both $G$ and $H$ are $p$-groups, this classification has been described in \cite{Dietrich1}. We record his result here.

\begin{proposition}
(Theorem 5.3 in \cite{Dietrich2}) Let $\mathcal{C}$ be a finite EI category with two objects for which the automorphism groups are $p$-groups. Suppose that $\mathcal{C} (y, x) = \emptyset$. Let $G = \mathcal{C} (x, x)$ and $H = \mathcal{C} (y, y)$. Then $k\mathcal{C}$ is of finite representation type if and only if the following conditions are satisfied
\begin{enumerate}
\item both $G$ and $H$ are cyclic;
\item either $G$ or $H$ acts transitively on $\mathcal{C} (x, y)$;
\item one of the following is true:
\begin{enumerate}[(a)]
\item $| \mathcal{C} (x, y) | \leqslant 1$;
\item $|G| |H| \leqslant 3$;
\item $p = 2 = \mathcal{C} (x, y)$, either $G$ or $H$ is trivial;
\item $p = 2 = \mathcal{C} (x, y)$, either $G$ or $H$ has order 2 and acts transitively;
\item $p = 3 = | \mathcal{C} (x, y) | = |G| = |H|$, and both $G$ and $H$ act transitively.
\end{enumerate}
\end{enumerate}
\end{proposition}

An immediate corollary is:

\begin{corollary}
Suppose that $p \geqslant 5$. Let $\mathcal{C}$ be a skeletal finite EI category for which the automorphism groups of all objects are $p$-groups. If $k\mathcal{C}$ is of finite representation type, then $| \mathcal{C} (x, y) | \leqslant 1$ for every pair of distinct objects $x$ and $y$.
\end{corollary}

\begin{proof}
Consider the full subcategory with objects $x$ and $y$ and use the previous proposition.
\end{proof}

Now suppose that both $G$ and $H$ are arbitrary. By the previous section, in many cases the representation type of $k\mathcal{C}$ is determined by two pieces of information: the transitivity of the actions by $G$ and $H$ on $\mathcal{C} (x, y)$, and the triviality of the actions by Sylow $p$-subgroups in $G$ and $H$. Indeed, the combination of conditions (a-c)
\begin{enumerate}[ \indent (a)]
\item Both $G$ and $H$ act transitively.
\item One of $G$ and $H$ acts transitively, and the other one does not.
\item Neither $G$ nor $H$ acts transitively.
\end{enumerate}
and conditions (1-3)
\begin{enumerate}
\item Both $O^{p'}G$ and $O^{p'}H$ act trivially.
\item One of $O^{p'}G$ and $O^{p'}H$ acts trivially, and the other one does not.
\item Neither $O^{p'}G$ nor $O^{p'}H$ acts trivially.
\end{enumerate}
give us 8 situations (the combination (a)+(2) cannot happen). By Theorem 1.1, if $\Char(k) = p \neq 2, 3$, we get infinite representation type for five cases: (c)+(1), (c)+(2), (c)+(3), (a)+(3), (b)+(3). In the first subsection we will prove the finite representation type for the case (a)+(1).

\subsection{Type (a) + (1).}

We first prove the following lemma.

\begin{lemma}
If $\mathcal{C} (x, y)$ has only one morphism, then $k\mathcal{C}$ is of finite representation type.
\end{lemma}

\begin{proof}
Let $p = \Char (k)$. Take a Sylow $p$-subgroup $P$ of $G$ and a Sylow $p$-subgroup $Q$ of $H$. This can always be done since if $G$ or $H$ has order invertible in $k$, we then take the trivial group as the unique Sylow $p$-subgroup by our convention. Let $\mathcal{S}$ be the following subcategory:
\begin{equation*}
\xymatrix{ \mathcal{S}: & x \ar@(ld,lu)[]|{P} \ar[rr] ^{\alpha} & & y \ar@(rd,ru)[]|{Q}},
\end{equation*}
and define another subcategory $\mathcal{J}$ as below:
\begin{equation*}
\xymatrix{ \mathcal{J}: & x \ar@(ld,lu)[]|{G} \ar[rr] ^{\alpha} & & y \ar@(rd,ru)[]|{Q}}.
\end{equation*}

By Proposition 4.1, $k\mathcal{S}$ is of finite representation type. Applying Proposition 3.8 to the pair $(\mathcal{J}, \mathcal{S})$, we conclude that $k\mathcal{J}$ is of finite representation type, and so is the category algebra of the opposite category $\mathcal{J} ^{\textnormal{op}}$. Applying Proposition 3.8 again to the pair $(\mathcal{C} ^{\textnormal{op}}, \mathcal{J} ^{\textnormal{op}})$, we obtain the finite representation type of $k\mathcal{C} ^{\textnormal{op}}$. Therefore, $k\mathcal{C}$ is of finite representation type.
\end{proof}

Now we prove Theorem 1.2.

\begin{theorem}
Suppose that both $G$ and $H$ act transitively on $\mathcal{C} (x, y)$. If $\Char(k) = p \neq 2, 3$, then $k\mathcal{C}$ is of finite representation type if and only if all $p$-subgroups of $G$ and $H$ act trivially on $\mathcal{C} (x, y)$.
\end{theorem}

\begin{proof}
Since both $G$ and $H$ act transitively on $\mathcal{C} (x, y)$, we have $G_0 \lhd G_1 = G$, $H_0 \lhd H_1 = H$. If $G$ or $H$ has a $p$-subgroup acting nontrivially on $\mathcal{C} (x, y)$, so does the other one by the remark before Proposition 3.10. Therefore, $k\mathcal{C}$ has infinite representation type by Proposition 3.10.

On the other hand, if all $p$-groups of $G$ and $H$ act trivially on $\mathcal{C} (x, y)$, then $| G/G_0 | = | H / H_0 | = n$ is invertible in $k$. Let $\mathcal{K}$ be the following subcategory:
\begin{equation*}
\xymatrix{ \mathcal{K}: & x \ar@(ld,lu)[]|{G_0} \ar[rr] ^{\alpha} & & y \ar@(rd,ru) []|{H_0}},
\end{equation*}
whose category algebra is of finite representation type by the previous lemma. Applying Proposition 3.8 to the pair $(\mathcal{C}, \mathcal{K})$, we conclude that $k\mathcal{C}$ is of finite representation type as well.
\end{proof}

\subsection{Both $G$ and $H$ are abelian.}

Throughout this subsection we assume that $G$ and $H$ are abelian, and $\Char(k) = p \neq 2, 3$. As before, we suppose that $H$ acts transitively on $\mathcal{C} (x, y)$. Note that when $p = 0$, by our convention $1$ is the unique $p$-subgroup of a finite group.

We first describe some easy observation.

\begin{lemma}
Suppose that $k\mathcal{C}$ is of finite representation type and $\Char(k) = p \neq 2, 3$. Then both $O^{p'}G$ and $O^{p'}H$ act trivially on $\mathcal{C} (x, y)$, and $|H : H_1| \leqslant 3$.
\end{lemma}

\begin{proof}
Since $\Char(p) = p \neq 2, 3$, we know that $O^{p'} G$ must act trivially on $\mathcal{C} (x, y)$. Let $P$ be the unique Sylow $p$-subgroup of $H$. Without loss of generality we assume that $P \neq 1$ (so $p \geqslant 5$). By Proposition 3.12, either $P \leqslant H_0 \leqslant H_1$ or $P \cap H_0 = 1$. We show the second possibility cannot happen, so $P \leqslant H_0$ acts trivially on $\mathcal{C} (x, y)$, as asserted by the first statement

Otherwise, if $P \cap H_0 = 1$, then $P \cap H_1 = 1$ as well (see the remark before Lemma 3.11). Write $H = P \times H'$. By our assumption, we can choose $H_1 \leqslant H'$. Factoring out $H'$, we get a quotient category $\mathcal{Q}$ with $\mathcal{Q} (x, x) = 1$, $\mathcal{Q} (y, y) = P$, and $\mathcal{Q} (y, y)$ acts regularly on $\mathcal{Q} (x, y)$. Thus $k\mathcal{Q}$ is of infinite representation type, so does $k\mathcal{C}$, contradicting the given condition.

The second statement comes from Proposition 3.17.
\end{proof}

Therefore, if $k\mathcal{C}$ is of finite representation type, $H / H_1 \cong C_n$ where $C_n$ is a cyclic group of order $n$, $1 \leqslant n \leqslant 3$.

From now on we suppose that $O^{p'}G \leqslant G_0$ and $O^{p'}H \leqslant H_0$. Note that $k(G / G_0) = k(G_1/G_0)$ is a semisimple $kG$-module with pairwise non-isomorphic simple summands. Let $S_1, \ldots, S_r$ be all simple summands. Correspondingly, let $e_1, \ldots, e_r$ be primitive idempotents in $kG$ such that $kGe_i$ is a projective cover of $S_i$, $1 \leqslant i \leqslant r$. Denote $1 - e_1 - \ldots - e_r$ by $f$. These simple modules $S_i$ can be viewed as $kH_1$-modules as well since $G/G_0 \cong H_1/H_0$, and we let $E_i$ be idempotents in $kH$ such that $kHE_i$ is the projective cover of $kH \otimes_{kH_1} S_i$, $1 \leqslant i \leqslant r$. By the previous lemma, $H / H_1 \cong C_n$ with $n \leqslant 3$, so $kH \otimes _{kH_1} S_i$ has $n$ simple summands, and each $E_i$ is a sum of $n$ primitive idempotents in $kH$. Let $F = 1 - E_1 - \ldots - E_r$. Note that
\begin{equation*}
kH \alpha \cong k \uparrow _{H_0}^H \cong kH \otimes _{kH_1} (k \uparrow _{H_0}^{H_1}) \cong \oplus _{i=1}^r kH \otimes _{kH_1} S_i.
\end{equation*}

\begin{lemma}
Notation as above. Then $k\mathcal{C} = \oplus _{i = 1}^ r \Lambda_i \oplus kHF \oplus kGf$ as algebras, where $\Lambda_i = kHE_i \oplus kGe_i \oplus kHE_i\alpha$ as vector spaces.
\end{lemma}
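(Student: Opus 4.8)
The plan is to produce explicit central idempotents realizing the decomposition. As a vector space $k\mathcal{C} = kG \oplus kH \oplus kH\alpha$, where $kH\alpha = k\mathcal{C}(x,y)$ is the $(kH,kG)$-bimodule recording the biset $\mathcal{C}(x,y) = H\alpha$ (transitivity of $H$). First I would record the multiplication table coming from composition of morphisms: $kG\cdot kH = kH\cdot kG = 0$, $kG\cdot kH\alpha = 0 = kH\alpha\cdot kH$ and $kH\alpha\cdot kH\alpha = 0$, while $kH\cdot kH\alpha = kH\alpha = kH\alpha\cdot kG$. These all hold because a composite of two morphisms is zero unless the codomain of the first equals the domain of the second. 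Since $G$ and $H$ are abelian, $kG$ and $kH$ are commutative, which I will use repeatedly.

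Set $\epsilon_i = e_i + E_i$ for $1\le i\le r$. Using $kG\cdot kH = 0$ together with the commutativity and orthogonality of the $e_i$ (in $kG$) and of the $E_i$ (in $kH$), I would check that $\epsilon_1,\dots,\epsilon_r,f,F$ are pairwise orthogonal idempotents with $\sum_i\epsilon_i + f + F = (\sum_i e_i + f) + (\sum_i E_i + F) = 1_x + 1_y = 1_{\mathcal{C}}$. The heart of the argument is then to prove that each of these idempotents is \emph{central}; once this is known, $k\mathcal{C}$ splits as the direct sum of the two-sided ideals $\epsilon_i k\mathcal{C}$, $fk\mathcal{C}$, $Fk\mathcal{C}$ (an algebra direct sum, since these are orthogonal central idempotents summing to $1$), and a short computation with the multiplication table gives $\epsilon_i k\mathcal{C} = kGe_i\oplus kHE_i\oplus kHE_i\alpha = \Lambda_i$, $fk\mathcal{C} = kGf$ and $Fk\mathcal{C} = kHF$.

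Centrality reduces, via the multiplication table, to three bimodule identities on $kH\alpha$. For $f$ it suffices that $\alpha f = 0$: the right $kG$-action on $\alpha$ factors through $kG\to k(G/G_0)$ because $G_0 = \Stab_G(\alpha)$, and $f = 1 - \sum_i e_i$ maps to $0$ there since the images $\bar e_i$ are exactly the primitive idempotents of the semisimple algebra $k(G/G_0)$, so $\sum_i\bar e_i = 1$. For $F$ it suffices that $F\cdot kH\alpha = 0$, which is immediate from $kH\alpha \cong \bigoplus_i kH\otimes_{kH_1}S_i$ together with the fact that the primitive idempotents summing to $E_i$ have as tops the simple constituents of $kH\otimes_{kH_1}S_i$; hence $E_i$ projects onto the $i$-th summand and $\sum_i E_i$ acts as the identity on $kH\alpha$, giving $F\cdot kH\alpha = 0$.

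The main obstacle is the third identity, $E_i\alpha = \alpha e_i$, needed for centrality of $\epsilon_i$; it is the only place where the left $kH$-action and the right $kG$-action on $\alpha$ must be matched. I would prove it by transporting everything to $k\uparrow_{H_0}^H \cong kH\otimes_{kH_1}\big(\bigoplus_i S_i\big)$, under which $\alpha$ corresponds to $1\otimes\sum_i s_i$, with $s_i$ the $S_i$-component of $1\in k\uparrow_{H_0}^{H_1}$. Writing $\eta_i\in kH_1$ for the idempotent cutting out $S_i$ inside $k(H_1/H_0)$, one sees $E_i\alpha = 1\otimes s_i = \eta_i\alpha$; on the other hand the isomorphism $\rho\colon G/G_0\xrightarrow{\sim} H_1/H_0$ of Lemma 4.1 satisfies $\alpha g = h\alpha$ whenever $\bar h = \rho(\bar g)$, so after identifying $S_i$ as a module over both $k(G/G_0)$ and $k(H_1/H_0)$ the element $e_i$ corresponds to $\eta_i$ and $\alpha e_i = \eta_i\alpha$. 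Combining the two gives $E_i\alpha = \alpha e_i$. The delicate points are that $E_i$ is defined over $kH$ while $\eta_i$ lives in $kH_1$ (so one must verify that $E_i$ and $\eta_i$ agree on $\alpha$, using that $E_i$ projects onto the full $kH$-summand generated by $S_i$), and that $\rho$ matches the idempotents correctly, which relies on $p\nmid|G/G_0| = |H_1/H_0|$ so that the relevant group algebras are semisimple and the $S_i$ are one-dimensional.
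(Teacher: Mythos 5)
Your proof is correct, and it follows the same skeleton as the paper's --- the same vector-space decomposition and the same structural inputs --- but packages the ideal property more sharply. The paper never exhibits central idempotents: it verifies directly that each summand is a two-sided ideal, and the heart of its argument is the subspace identity $kH\alpha e_i = kHE_i\alpha = kHE_i\alpha e_i$, proved by the Hom-vanishing computations $E_i\, kH\alpha\, (1-e_i) = 0$ and $(1-E_i)\, kH\alpha\, e_i = 0$, both resting on $kH\alpha \cong \bigoplus_j S_j \uparrow_{H_1}^H$ and the fact that $kHE_i$ is a projective cover of $S_i \uparrow_{H_1}^H$; the remaining closures ($kHE_i\alpha G \subseteq kHE_i\alpha$ from $\alpha G \subseteq H\alpha$ and commutativity of $kH$, and $kH\alpha f = kHF\alpha = 0$) are handled just as you handle them. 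You upgrade the paper's subspace identity to the element identity $E_i\alpha = \alpha e_i$ by transporting the right $kG$-action through $\rho\colon G/G_0 \to H_1/H_0$ and matching $\bar{e}_i$ with the idempotent $\eta_i \in k(H_1/H_0)$ cutting out $S_i$; this is valid: since $H_0$ fixes $\alpha$ the action of $\eta_i$ on $\alpha$ is well defined, $\rho$ is an algebra isomorphism (so $\rho(\bar{e}_i) = \eta_i$ under the identification of the $S_i$), and your claim that $E_i$ acts on $kH\alpha$ as the projection onto the summand $S_i \uparrow_{H_1}^H$ follows from exactly the disjointness of composition factors across the summands (via $H \cong H_1 \times C_n$ with $p \nmid n$, so the constituents $S_j \otimes \chi$ are pairwise non-isomorphic for distinct $j$) that underlies the paper's Hom-vanishing steps. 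What your packaging buys is the stronger conclusion that the decomposition is realized by the explicit orthogonal central idempotents $\epsilon_i = e_i + E_i$, $f$, $F$ summing to $1_x + 1_y$, which makes the identifications $\Lambda_i = \epsilon_i k\mathcal{C}$, $kGf = fk\mathcal{C}$, $kHF = Fk\mathcal{C}$ immediate and also yields $\alpha f = F\alpha = 0$ as one-line consequences of $\sum_i \eta_i = 1$; what the paper's version buys is that it never needs to match idempotents under $\rho$ on the nose, only to verify vanishing of certain Hom spaces.
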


\begin{proof}
Since $\mathcal{C} (x, y) = H \alpha$, $1_y = F + E_1 + \ldots + E_r$, and
\begin{align*}
KHF\alpha & \cong \oplus _{i=1}^r F(kH \otimes _{kH_1} S_i) = 0
\end{align*}
since $kH \otimes _{kH_1} S_i$ has a projective cover $kHE_i$. Therefore, the first decomposition is true as vector spaces, and we want to show that all summands are $k\mathcal{C}$-ideals. Particularly, it suffices to show the following identities: $kHE_i \alpha G \subseteq kHE_i \alpha$ and $kH\alpha G e_i \subseteq kHE_i \alpha$ for $1 \leqslant i \leqslant r$; $kHF \alpha G = 0$; and $kH\alpha Gf=0$.

First, we have
\begin{align*}
kH \alpha G f & = kH \alpha f = \oplus _{i=1}^r kH \otimes _{kH_1} S_if = 0
\end{align*}
since $f = 1 - \sum_{i=1}^ r e_i$ and $kGe_i$ is the projective cover of $S_i$.

The identity $KHF \alpha G = 0$ is clear as we have shown $kHF\alpha = 0$. The identity $kH E_i \alpha G \subseteq kHE_i \alpha$ is also clear since $\alpha G \subseteq H\alpha$ and $H$ is abelian. We claim $kH \alpha e_i  = kHE_i \alpha = kHE_i \alpha e_i$, implying $kH \alpha G e_i = kH \alpha e_i \subseteq kHE_i \alpha$. Indeed,
\begin{align*}
E_i kH \alpha (1-e_i) & \cong \oplus _{j=1}^r E_i (kH \otimes _{kH_1} S_j) (1-e_i)\\
& \cong E_i (kH \otimes _{kH_1} (S_1 \oplus \ldots \oplus S_{i-1} \oplus S_{i+1} \oplus \dots \oplus S_n))\\
& \cong E_i (S_1 \uparrow _{H_1} ^H \oplus \ldots \oplus S_{i-1} \uparrow _{H_1} ^H \oplus S_{i+1} \uparrow _{H_1} ^H \oplus \ldots \oplus S_n \uparrow _{H_1} ^H)
\end{align*}
is 0 since $kH E_i$ is the projective cover of $S_i \uparrow _{H_1}^H$, so $kHE_i \alpha = kHE_i \alpha e_i$. Similarly, we can show $(1- E_i) kH\alpha e_i = 0$. Therefore, $kH \alpha e_i = kHE_i \alpha e_i$.
\end{proof}

We see at once that $k\mathcal{C}$ is of finite representation type if and only if all $\Lambda_i$ are of finite representation type, $1 \leqslant i \leqslant r$, for which the structure is described by the following lemma.

\begin{lemma}
Let $\Lambda_i$ be as in the previous lemma, $1 \leqslant i \leqslant r$. Then it is isomorphic to the path algebra of the following bounded quiver with relations: $\delta_i^t = \delta_i \alpha_i = \alpha_i \theta = \theta^s = 0$, $1 \leqslant i \leqslant n = |H: H_1|$, where $t = |O^{p'} H|$ and $s = |O^{p'}G|$.
\begin{equation*}
\xymatrix{ & \bullet \ar@(ul,ur)[]| {\delta_{\ldots}} \\ \bullet \ar@(ld,lu)[]| {\delta_1} & \bullet \ar@(ld,rd)[]| {\theta} \ar[u]^{\alpha_{\ldots}} \ar[r]_{\alpha_n} \ar[l] ^{\alpha_1} & \bullet \ar@(rd,ru)[]| {\delta_n} }
\end{equation*}
\end{lemma}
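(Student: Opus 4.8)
The plan is to realize $\Lambda_i$ as a quotient of the path algebra $kQ_i$ of the displayed quiver and then close the argument by a dimension count. Write $\Gamma = kQ_i/I$, where $I$ is generated by the listed relations $\theta^s=0$, $\delta_j^t=0$, $\delta_j\alpha_j=0$, $\alpha_j\theta=0$. I would define a $k$-algebra homomorphism $\Phi\colon \Gamma \to \Lambda_i$ on the vertices, loops and arrows, check that the relations are sent to $0$ so that $\Phi$ is well defined, verify that the images generate $\Lambda_i$ so that $\Phi$ is surjective, and finally observe that $\dim_k\Gamma = \dim_k\Lambda_i$, which forces $\Phi$ to be an isomorphism.

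First I would record the elementary block theory feeding the three summands of $\Lambda_i = kHE_i \oplus kGe_i \oplus kHE_i\alpha$ from Lemma 7.9. Since $G$ is abelian with cyclic Sylow $p$-subgroup $O^{p'}G$ of order $s$, we may write $G = O^{p'}G \times G'$ with $p\nmid|G'|$, so $kG$ is a product of local blocks, each isomorphic to $k[\theta]/(\theta^s)$ via $\theta = g_0-1$ for a generator $g_0$ of $O^{p'}G$. Hence $e_i$ is a block idempotent, $kGe_i = e_ikGe_i \cong k[\theta]/(\theta^s)$ is uniserial projective of length $s$ with unique composition factor $S_i$, and $\End_{kG}(kGe_i)\cong k[\theta]/(\theta^s)$. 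Dually $H = O^{p'}H\times H'$, and because $H \cong H_1\times C_n$ with $p\nmid n$ we have $kH\otimes_{kH_1}S_i \cong S_i\boxtimes kC_n \cong \bigoplus_{j=1}^n U_{ij}$, a sum of $n$ pairwise non-isomorphic one-dimensional simples; since $H$ is abelian each simple is determined by its restriction to the $p'$-part $H'$, so the $U_{ij}$ lie in $n$ distinct blocks. Consequently $E_i = \sum_{j=1}^n \epsilon_{ij}$ is a sum of $n$ block idempotents, $kHE_i = \bigoplus_j kH\epsilon_{ij}$ with each $kH\epsilon_{ij}\cong k[\delta_j]/(\delta_j^t)$ uniserial projective of length $t$, and $\Hom_{kH}(kH\epsilon_{ij},kH\epsilon_{ik}) = 0$ for $j\neq k$.

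With these facts I would set $\Phi$ to send the central and leaf idempotents to $e_i$ and $\epsilon_{i1},\dots,\epsilon_{in}$, the central loop to $\theta = e_i(g_0-1)$, the leaf loops to $\delta_j = \epsilon_{ij}(h_0-1)$ (with $h_0$ a generator of $O^{p'}H$), and the arrow $\alpha_j$ to $\epsilon_{ij}\alpha e_i$. The identity $kHE_i\alpha \cong \bigoplus_j U_{ij}$ established in the proof of Lemma 7.9 shows each $\epsilon_{ij}\alpha e_i$ spans the one-dimensional space $\epsilon_{ij}(kHE_i\alpha)$, so the arrow images are nonzero and span $E_i\Lambda_ie_i$. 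The relations $\theta^s=0$ and $\delta_j^t=0$ are the defining relations of the two truncated-polynomial endomorphism rings. For the mixed relations I would invoke the standing trivial-action hypotheses of this subsection: $O^{p'}G\le G_0$ gives $\alpha g_0=\alpha$, whence $\alpha\theta = \alpha(g_0-1)e_i = 0$ and so $\alpha_j\theta=0$; and $O^{p'}H\le H_0$ gives $h_0\alpha=\alpha$, whence $(h_0-1)\alpha=0$ and so $\delta_j\alpha_j=0$ (using that $H$ abelian lets $\epsilon_{ij}$ commute with $h_0$). Thus $\Phi$ is well defined and surjective. A basis of $\Gamma$ is $\{\theta^a : 0\le a<s\}\cup\{\delta_j^b : 0\le b<t,\ 1\le j\le n\}\cup\{\alpha_j : 1\le j\le n\}$, because the relations force every center-to-leaf path $\delta_j^b\alpha_j\theta^a$ to vanish unless $a=b=0$, and there are no leaf-to-center or leaf-to-leaf paths; hence $\dim_k\Gamma = s+nt+n = \dim_k\Lambda_i$, and $\Phi$ is an isomorphism.

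The step I expect to demand the most care is not any individual relation but the bookkeeping that confirms these relations generate \emph{all} relations, i.e. that the Gabriel quiver of $\Lambda_i$ is exactly the displayed one. Concretely I must check $\dim_k \epsilon_{ij}(\rad\Lambda_i/\rad^2\Lambda_i)e_i = 1$, so that there is a single arrow $x\to y_j$ (this follows from the one-dimensionality of $U_{ij}$), and that there are no arrows in the reverse or leaf-to-leaf directions (from $\mathcal{C}(y,x)=\emptyset$ and from the block-orthogonality $\Hom_{kH}(kH\epsilon_{ij},kH\epsilon_{ik})=0$). Once the quiver and its arrows are pinned down, the dimension count closes the argument cleanly, so the essential content reduces to the block-theoretic description of $kGe_i$ and $kHE_i$ together with the two trivial-action identities $\alpha g_0=\alpha$ and $h_0\alpha=\alpha$.
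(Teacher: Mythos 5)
Your proposal is correct and takes essentially the same route as the paper's proof: the block decompositions $kGe_i \cong k[\theta]/(\theta^s)$ and $kHE_i \cong \bigoplus_{j=1}^n k[\delta_j]/(\delta_j^t)$, the identification of $kHE_i\alpha$ as $n$ one-dimensional spaces of maps onto tops (via Lemma 7.9), and the mixed relations $\alpha_j\theta = 0 = \delta_j\alpha_j$ coming from the standing trivial-action hypotheses $O^{p'}G \leqslant G_0$ and $O^{p'}H \leqslant H_0$. Your explicit surjection from the bound quiver algebra together with the dimension count $s + nt + n$ simply makes precise the final identification that the paper compresses into ``the conclusion follows from these observations,'' and in particular correctly renders the Gabriel-quiver worry in your last paragraph unnecessary.
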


\begin{proof}
Note that $H / H_1 \cong C_n$. Therefore, $S_i \uparrow _{H_1}^H$ is semisimple and has $n$ non-isomorphic simple summands, and $kHE_i = E_i kH E_i$ is a direct sum of endomorphism algebras of $n$ non-isomorphism projective $kH$-modules. Consequently, $kH E_i \cong \oplus _{i=1}^n k[\delta_i] / (\delta_i^t)$ where $t = | O^{p'}H |$. Similarly, $kG e_i \cong e_i kG e_i$ is the endomorphism algebra of an indecomposable projective $kG$-module. Therefore, $kGe_i \cong k[\theta] / (\theta^s)$ where $s = | O^{p'}G |$.

As a left $E_ikHE_i$-module, we have
\begin{align*}
E_i kH \alpha & \cong \Hom _{kH} (kHE_i, kH\alpha) \cong \Hom _{kH} (kHE_i, \oplus _{j=1}^r (S_j \uparrow _{H_1}^H))\\
& \cong \Hom _{kH} (kHE_i, S_i \uparrow _{H_1}^H)
\end{align*}
which is the space of all homomorphisms from $kHE_i$ to its top $S_i \uparrow _{H_1}^H$. Since $kHE_i \alpha  = kHE_i \alpha e_i$ from the proof of the previous lemma, as an $e_ikGe_i$-module,
\begin{align*}
E_i kH \alpha e_i & \cong \oplus _{j=1}^r E_i (kH \otimes _{kH_1} S_j) e_i \cong = (kH \otimes _{kH_1} S_i ) e_i \\
& \cong (S_i e_i)^d \cong \Hom_{e_ikGe_i} (e_ikG, S_i)^d.
\end{align*}
which is the direct sum of $d$ copies of projections from $e_ikG$ to its top $S_i$, where $d = |H: H_1|$. The conclusion follows from these observations.
\end{proof}

Now we are ready to classify the representation types of finite EI categories with two objects whose automorphism groups are abelian for $p \neq 2, 3$.

\begin{theorem}
Suppose that both $G$ and $H$ are abelian, $\Char (k) = p \neq 2,3$. Without loss of generality assume that $H$ acts transitively on $\mathcal{C} (x, y)$. Let $s = | O^{p'}G |$, $t = | O^{p'}H |$, and $n = | H: H_1 |$.
Then $k\mathcal{C}$ is of finite representation type if and only if both $O^{p'}G$ and $O^{p'}H$ act trivially on $\mathcal{C} (x, y)$, and one of the following conditions hold:
\begin{enumerate}
\item $n = 1$ for $s, t \geqslant 5$;
\item $n \leqslant 2$ for $s = 1, t \geqslant p$ or $t = 1, s \geqslant p$;
\item $n \leqslant 3$ for $t = s = 1$.
\end{enumerate}
\end{theorem}

\begin{proof}
By Lemma 4.5, we know that $n$ cannot be bigger than 3 for all cases. Lemmas 4.6 and 4.7 tell us that it suffices to consider the representation types of path algebras described in Lemma 4.8. Now we can use covering theory, Auslander-Reiten quivers, and Proposition 4.1 to get the conclusion.

As an example, let us show the case $t = 1$, $s \geqslant p \geqslant 5$, and $n = 3$ is of infinite representation type. In that case the corresponding path algebra has the following covering:
\begin{equation*}
\xymatrix{ & \bullet & \bullet & \bullet & \bullet & \bullet & \bullet \\
\ldots \ar[rr]^{\theta} & & \bullet \ar[rr]^{\theta} \ar[ul]^{\rho} \ar[u]^{\beta} \ar[d]^{\gamma} & & \bullet \ar[rr]^{\theta} \ar[ul]^{\rho} \ar[u]^{\beta} \ar[d]^{\gamma} & & \bullet \ar[rr]^{\theta} \ar[ul]^{\rho} \ar[u]^{\beta} \ar[d]^{\gamma} & & \ldots\\
 & & \bullet & & \bullet & & \bullet
}
\end{equation*}
with relations $\theta^s = \rho \theta = \beta \theta = \gamma \theta = 0$. This covering is not locally representation-finite since it has a subquiver as
\begin{equation*}
\xymatrix{ & \bullet \\ \bullet & \bullet \ar[r]^{\rho} \ar[l]^{\beta} \ar[u]^{\gamma} \ar[d]^{\theta} & \bullet \\ & \bullet}.
\end{equation*}
Therefore, $k\mathcal{C}$ has infinite representation type for $t = 1$, $s \geqslant 5$ and $n = 3$.
\end{proof}

\section{Finite EI categories with three objects}

The reader can see from our previous analysis that the problem to classify the representation types of category algebras of arbitrary finite EI categories is very complicated. Indeed, the answer for finite EI categories with two objects is still not completely obtained. However, for an arbitrary finite EI category with more than two objects, by considering its full subcategories with two objects, we can still deduce a lot of useful information on the representation type of its category algebra. In this section we consider two special collections.

\subsection{Finite EI categories with three objects whose automorphism groups of objects are $p$-groups.} Again, we assume that $p \geqslant 5$. The main techniques we use are covering theory (see \cite{Bongartz}) and representations of \textit{string algebras} (see \cite{Butler}). By Proposition 4.1 and Corollary 4.2, we can make the following assumptions: all automorphism groups are cyclic, and there is at most one morphism between every pair of distinct objects.

Consider two families of finite EI categories $\mathcal{C}$ as below:
\begin{equation*}
\xymatrix{ x \ar@(ld,lu)[]|{G} \ar[r]^{\alpha} & y \ar@(lu,ru)[]|{H} \ar[r]_{\beta} & z \ar@(rd,ru)[]| {L} & & x \ar@(ld,lu)[]|{G} \ar[r]^{\alpha} & y \ar@(lu,ru)[]|{H} & z \ar@(rd,ru)[]| {L} \ar[l]_{\beta} }
\end{equation*}
Let $|G| = p^r$, $|H| = p^s$, and $|L| = p^t$. Note that the category algebra $k\mathcal{C}$ is isomorphic to the path algebra of one of the following bounded quivers with relations $g^{p^r} = h^{p^s} = l^{r^t} = 0$, $h \alpha = \alpha g =0$, and $l \beta = \beta h = 0$ (or $\beta l = h \beta = 0$ for the second class).
\begin{equation*}
\xymatrix{\bullet \ar@(ld,lu)[]|{g} \ar[r]^{\alpha} & \bullet \ar@(lu,ru)[]|{h} \ar[r]_{\beta} & \bullet \ar@(rd,ru)[]| {l} & & \bullet \ar@(ld,lu)[]|{g} \ar[r]^{\alpha} & \bullet \ar@(lu,ru)[]|{h} & \bullet \ar@(rd,ru)[]| {l} \ar[l]_{\beta} }
\end{equation*}

\begin{proposition}
Let $\mathcal{C}$ be a connected skeletal finite EI category with three objects such that the automorphism groups of all objects are $p$-groups and suppose that $\Char(k) = p \geqslant 5$. Then $k\mathcal{C}$ is of finite representation type if and only if $\mathcal{C}$ or $\mathcal{C}^{\textnormal{op}}$ satisfies one of the following conditions:
\begin{enumerate}
\item it lies in the first family;
\item it lies in the second family and $s = 0$;
\item it lies in the second family and $r = t = 0$.
\end{enumerate}
\end{proposition}

\begin{proof}
If $k\mathcal{C}$ is of finite representation type, then either $\mathcal{C}$ or $\mathcal{C} ^{\textnormal{op}}$ lies in one of these two families. Without loss of generality we suppose that $\mathcal{C}$ are in one of these two families. There are four cases:\\

\textbf{Case I:} If $\mathcal{C}$ is in the first family, then $k\mathcal{C}$ is a \textit{string algebra}. For a definition of string algebras, see \cite{Butler}. In that paper an algorithm to construct all indecomposable representations (up to isomorphism) of a string algebra is described. Thus we can describe all indecomposable representations $R$ of $\mathcal{C}$ as follows: for $w \in \{ x, y, z \}$, $R(w)$ is an indecomposable representation of the automorphism group of the corresponding object; the maps $R(\alpha)$ and $R(\beta)$ send the top (if nonzero) of a group representation isomorphically onto the socle (if nonzero) of another group representation. Obviously, $k\mathcal{C}$ is of finite representation type.\\

\textbf{Case II:} $\mathcal{C}$ lies in the second family, and $s = 0$. In this case $k\mathcal{C}$ is still a string algebra, and we can describe all indecomposable representations $R$ of $\mathcal{C}$ as above. Similarly, the value of $R$ on each object is an indecomposable representation of the automorphism group of this object, so $k\mathcal{C}$ is of finite representation type as well.\\

\textbf{Case III:} $\mathcal{C}$ lies in the second family, $s \neq 0$, and either $r \neq 0$ or $t \neq 0$. Without loss of generality we assume that $r \neq 0$. Then $k\mathcal{C}$ has a covering (see \cite{Bongartz}) as shown below with relations $g^{p^r} = h^{p^s} = l^{g^t} = 0$, $\alpha g = h \alpha =0$, and $h \beta = \beta l =0$. Here the arrow marked by $l$ does not exist if $t = 0$.
\begin{equation*}
\xymatrix{
\ldots \ar[r]^g & \bullet \ar[r]^g \ar[d]^{\alpha} & \bullet \ar[r]^g \ar[d]^{\alpha} & \bullet \ar[r]^g \ar[d]^{\alpha} & \ldots\\
\ldots \ar[r]^h & \bullet \ar[r]^h & \bullet \ar[r]^h & \bullet \ar[r]^h & \ldots\\
\ldots \ar@{-->}[r]^l & \bullet \ar@{-->}[r]^l \ar[u]^{\beta} & \bullet \ar@{-->}[r]^l \ar[u]^{\beta} & \bullet \ar@{-->}[r]^l \ar[u]^{\beta} & \ldots
}
\end{equation*}
This quiver is not locally representation-finite since it has the following subquiver with $2p+1 \geqslant 11$ vertices, which is of infinite representation type:
\begin{equation*}
\xymatrix{ 1 \ar[r]^h & 2 \ar[r]^h & \ldots \ar[r]^h & p & p+1 \ar[l]^{\alpha} \ar[r]^g & \ar[r] p+2 \ar[r]^g & \ldots \ar[r]^g & 2p\\
 & & & 0 \ar[u]^{\beta}
 }
\end{equation*}
Therefore, $k\mathcal{C}$ is of infinite representation type.\\

\textbf{Case IV:} $\mathcal{C}$ lies in the second family, and $r = t = 0$. We claim that the category algebra of this category is of finite representation type. To prove this, let $R$ be an indecomposable representation of $\mathcal{C}$. It suffices to show that $R(y)$ is an indecomposable $kH$-module. Otherwise, $R(y) = M \oplus N$ where $M, N \in kH$-mod are nonzero and $M$ is indecomposable. Note that $R(\alpha)$ and $R(\beta)$ are injective, and their images are contained in $\Soc (R(y))$. Take $0 \neq v \in \Soc(M)$, and let $u$ and $w$ be the preimages of $v$ in $R(x)$ and $R(z)$ respectively (if a preimage does not exist, we take 0 alternately). Then the reader can check that the $k\mathcal{C}$-submodule $R'$ of $R$ with $R'(x) = \langle u \rangle$, $R'(y) = M$ and $R'(z) = \langle w \rangle$ is a nonzero proper summand of $R$. This contradicts the assumption that $R$ is indecomposable. Therefore, $R(y)$ is indecomposable, so $k\mathcal{C}$ is of finite representation type.
\end{proof}

\subsection{Finite free EI categories with three objects.}

The concepts of \textit{unfactorizable morphisms} and \textit{finite free EI categories} are introduced in \cite{Li1}.

\begin{definition}
A morphism $\alpha: x \rightarrow z$ in a finite EI category $\mathcal{C}$ is unfactorizable if $\alpha$ is not an isomorphism and whenever it has a factorization as a composite $\xymatrix{x \ar[r]^{\beta} & y\ar[r]^{\gamma} & z}$, then either $\beta$ or $\gamma$ is an isomorphism.
\end{definition}

For finite EI categories, \textit{unfactorizable morphisms} are precisely \textit{irreducible morphisms} which are widely used in \cite{Auslander, Bautista, Xu1}. But in a more general context, they are different, see Example 2.4 in \cite{Li1}.

\begin{definition}
A finite EI category $\mathcal{C}$ is called a finite free EI category if it satisfies the following Unique Factorization Property (UFP): whenever a non-isomorphism $\alpha$ has two decompositions into unfactorizable morphisms,
\begin{equation*}
\xymatrix{x=x_0 \ar[r]^{\alpha_1} & x_1 \ar[r]^{\alpha_2} & \ldots \ar[r]^{\alpha_m} &x_m=y \\
x=x_0 \ar[r]^{\beta_1} & y_1 \ar[r]^{\beta_2} & \ldots \ar[r]^{\beta_n} & y_n=y}
\end{equation*}
then $m=n$, $x_i = y_i$, and there are $ h_i \in \mathcal{C} (x_i, x_i)$ such that the following diagram commutes, $1 \leqslant i \leqslant n-1$:
\begin{align*}
\xymatrix{ x_0 \ar[r]^{\alpha_1} \ar@{=}[d]^{id} & x_1 \ar[r]^{\alpha_2} \ar[d]^{h_1} & \ldots \ar[r]^{\alpha_{\ldots}} \ar[d]^{h_{\ldots}} & x_{n-1} \ar[r]^{\alpha_n} \ar[d]^{h_{n-1}} & x_n \ar@{=}[d]^{id} \\
x_0 \ar[r]^{\beta_1} & x_1 \ar[r]^{\beta_2} & \ldots \ar[r]^{\beta_{\ldots}} & x_{n-1} \ar[r]^{\beta_n} & x_n}.
\end{align*}
\end{definition}

Note that every finite free EI category $\mathcal{C}$ can be graded as follows: automorphisms lie in degree 0, and unfactorizable morphisms are in degree 1. The reader can easily check the following fact: an arbitrary finite EI category $\mathcal{C}$ is a finite free EI category if and only if $\mathcal{C}$ can be graded in the above way, and the graded category algebra $k\mathcal{C}$ is isomorphic to the tensor algebra generated by the degree 0 and the degree 1 components.

Some properties of finite free EI categories are collected below.

\begin{proposition}
Let $\mathcal{C}$ be a finite EI category.
\begin{enumerate}
\item There is a finite free EI category $\hat{\mathcal{C}}$ and a full functor $\hat{F}: \hat{\mathcal{C}} \rightarrow \mathcal{C}$ such that $\hat{F}$ is the identity map restricted to objects, automorphisms and unfactorizable morphisms in $\mathcal{C}$. This finite free EI category $\hat {\mathcal{C}}$ is unique up to isomorphism.
\item $\mathcal{C}$ is a finite free EI category if and only if so are all full subcategories.
\item The algebra $k\mathcal{C}$ is hereditary if and only if $\mathcal{C}$ is a finite free EI category and the automorphism group of each object has order invertible in $k$.
\end{enumerate}
\end{proposition}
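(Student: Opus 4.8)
The plan is to handle the three statements in order, constructing the free cover by hand for (1), reading off (2) from the uniqueness of factorizations, and converting (3) into a statement about a quiver presentation of $k\mathcal{C}$. For (1) I would build $\hat{\mathcal{C}}$ directly from the ``combinatorial data'' of $\mathcal{C}$: the same objects, the same automorphism groups $\mathcal{C}(x,x)$, and the set of unfactorizable morphisms together with their biset structure under pre- and post-composition by automorphisms (legitimate by the earlier Proposition, which says $h\alpha g$ is unfactorizable). For distinct $x,y$, let $\hat{\mathcal{C}}(x,y)$ be the set of formal composable strings $\alpha_m\cdots\alpha_1$ of unfactorizable morphisms running $x=x_0\to\cdots\to x_m=y$, modulo the equivalence generated by the trivial relations $(\alpha_{i+1}h^{-1})\cdot(h\alpha_i)\sim\alpha_{i+1}\cdot\alpha_i$ for $h\in\mathcal{C}(x_i,x_i)$, with composition given by concatenation. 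One checks that $\hat{\mathcal{C}}$ is a finite EI category whose unfactorizable morphisms are exactly the length-one strings and which satisfies the UFP by construction, and that it is finite because there are finitely many unfactorizables while the EI (hence acyclic) condition bounds the string length. The functor $\hat F$ sends a string to its honest composite in $\mathcal{C}$; it is the identity on objects, automorphisms and unfactorizable morphisms, and it is full precisely because every non-isomorphism of $\mathcal{C}$ is a composite of unfactorizables (second assertion of the earlier Proposition). Uniqueness follows because any finite free EI category admitting such an $\hat F$ must carry the same objects, automorphism groups and unfactorizable morphisms, and its UFP then forces a composition- and action-preserving bijection with $\hat{\mathcal{C}}$; in other words both are ``the free category on the EI-quiver of $\mathcal{C}$,'' unique up to unique isomorphism.

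For (2), the backward implication is trivial since $\mathcal{C}$ is a full subcategory of itself, so the content is that a full subcategory $\mathcal{D}$ of a free $\mathcal{C}$ is free. The key lemma I would prove is a description of $\mathcal{D}$-unfactorizables: a non-isomorphism $\beta$ of $\mathcal{D}$ is $\mathcal{D}$-unfactorizable if and only if, in its essentially unique $\mathcal{C}$-factorization $\alpha_m\cdots\alpha_1$ through $x=x_0,\dots,x_m=y$, none of the intermediate objects $x_1,\dots,x_{m-1}$ lies in $\Ob\mathcal{D}$. Granting this, I would factor an arbitrary non-isomorphism of $\mathcal{D}$ in $\mathcal{C}$, group the unfactorizable factors at exactly those objects that happen to lie in $\Ob\mathcal{D}$, and obtain a $\mathcal{D}$-factorization into $\mathcal{D}$-unfactorizables; the uniqueness-up-to-trivial-relations of the underlying $\mathcal{C}$-factorization then transports to the grouped factorization (the automorphism insertions at non-$\mathcal{D}$ objects are internal to the grouped factors and irrelevant at the $\mathcal{D}$-level), which is exactly the UFP for $\mathcal{D}$.

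For (3) I would use the quiver presentation of $k\mathcal{C}$. For the forward direction, since $k$ is algebraically closed, a hereditary $k\mathcal{C}$ is the path algebra of a finite acyclic quiver, and every corner algebra $e(k\mathcal{C})e$ is then again such a path algebra, hence hereditary. Taking $e=1_x$ gives $1_x(k\mathcal{C})1_x = k\,\mathcal{C}(x,x)$, a group algebra that is both hereditary and self-injective (group algebras are symmetric), hence semisimple; by Maschke $|\mathcal{C}(x,x)|$ is invertible in $k$. With all group orders invertible, the construction of the ordinary quiver $\tilde Q$ from \cite{Li1} applies and presents $k\mathcal{C}\cong k\tilde Q/I$, where the generators of $I$ record exactly the failures of the UFP; heredity forces $I=0$, hence $\mathcal{C}$ is free. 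The backward direction is the converse half of the same correspondence: if $\mathcal{C}$ is free and the orders are invertible, each $k\,\mathcal{C}(x,x)$ is semisimple and there are no relations, so $k\mathcal{C}\cong k\tilde Q$ is a path algebra and therefore hereditary.

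The recurring delicate point is the UFP bookkeeping: making the equivalence relation defining $\hat{\mathcal{C}}(x,y)$ precise and checking it is compatible with concatenation (associativity and interchange with automorphisms), and proving the $\mathcal{D}$-unfactorizable lemma cleanly from the UFP of $\mathcal{C}$. The genuinely hardest step is the forward direction of (3): I must show that no subtle relation among unfactorizables can survive while keeping global dimension at most $1$, i.e.\ that the ideal $I$ is generated precisely by UFP-failures. I expect to establish this by comparing the minimal projective resolution of each simple module with the expected presentation and showing $\operatorname{Ext}^2_{k\mathcal{C}}(S_x,-)\neq 0$ exactly when the UFP fails; this reduces to the invertible-order case, where the quiver-with-relations description of \cite{Li1} is available, and so this is the step whose details I would want to reconstruct most carefully.
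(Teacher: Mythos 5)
The paper offers no argument of its own for this proposition: its ``proof'' is a pointer to Propositions 2.9, 2.10 and Theorem 5.3 of \cite{Li1}, so the comparison has to be with those cited proofs. Against that benchmark your (1) and (2) are essentially the standard arguments and are sound in outline: your $\hat{\mathcal{C}}$ is the free EI category on the EI-quiver data of $\mathcal{C}$ (strings of unfactorizables modulo the trivial relations, i.e.\ iterated biset products), fullness of $\hat{F}$ comes from Proposition 2.3, and your key lemma for (2) --- a non-isomorphism of a full subcategory $\mathcal{D}$ is $\mathcal{D}$-unfactorizable iff its essentially unique $\mathcal{C}$-factorization has no intermediate object in $\Ob \mathcal{D}$ --- together with block-composition of the UFP ladder is exactly how this is done. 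Two small cautions: finiteness and acyclicity of $\hat{\mathcal{C}}$ use the paper's standing skeletality convention, and in the uniqueness argument you must first verify that the unfactorizable morphisms of a competing cover $\mathcal{C}'$ are \emph{precisely} those of $\mathcal{C}$ (fullness of $\hat F$ is not faithfulness, so this needs a separate check) before UFP yields your bijection. The first half of your (3) is also correct: corner algebras $e(k\mathcal{C})e$ of a hereditary algebra are hereditary (for $e$ a sum of vertex idempotents this is your own grouping argument applied to paths of $\tilde{Q}$), and the hereditary self-injective corner $1_x k\mathcal{C} 1_x = k\,\mathcal{C}(x,x)$ is semisimple, so Maschke gives invertibility of $|\mathcal{C}(x,x)|$.

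The genuine gap sits exactly where you predicted, but it is worse than missing details: as written, the rest of (3) is circular. Both ``heredity forces $I=0$, hence $\mathcal{C}$ is free'' and the backward direction ``there are no relations, so $k\mathcal{C} \cong k\tilde{Q}$'' presuppose the equivalence (for invertible orders) between the UFP and the vanishing of the admissible ideal $I$ --- and that equivalence \emph{is} Theorem 5.3 of \cite{Li1}; the Section 4 algorithm only produces $\tilde{Q}$, it says nothing about $I$. Nor is it literally true that ``the generators of $I$ record exactly the failures of the UFP'': by Proposition 3.6 the kernel of $k\hat{\mathcal{C}} \twoheadrightarrow k\mathcal{C}$ is spanned by differences $\hat{\alpha}-\hat{\beta}$ of cover morphisms with equal image, and translating this into an admissible ideal of $k\tilde{Q}$ already requires knowing that $k\hat{\mathcal{C}}$ is Morita equivalent to $k\tilde{Q}$ --- i.e.\ the ``if'' direction you are trying to prove. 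The cited proof closes the loop by bypassing the quiver presentation for that direction: when $\mathcal{C}$ is free with invertible orders, the UFP shows that composition induces an algebra isomorphism from the tensor algebra $T_{k\mathcal{C}_0}(M)$ onto $k\mathcal{C}$, where $k\mathcal{C}_0 = \bigoplus_{x} k\,\mathcal{C}(x,x)$ is semisimple and $M$ is the $(k\mathcal{C}_0,k\mathcal{C}_0)$-bimodule spanned by the unfactorizable morphisms; tensor algebras of bimodules over semisimple algebras are hereditary. With that supplied, your $\operatorname{Ext}^2$/dimension strategy for ``hereditary $\Rightarrow$ free'' does go through: $k\mathcal{C}$ and $k\hat{\mathcal{C}}$ have the same ordinary quiver (the algorithm sees only the groups and the unfactorizable bisets, which $\hat{F}$ preserves), $k\hat{\mathcal{C}}$ is Morita equivalent to $k\tilde{Q}$ by the tensor-algebra step, and a proper quotient (Proposition 3.6 again, noting $\varepsilon I \varepsilon \neq 0$ for a full basic idempotent $\varepsilon$) cannot be hereditary with the same quiver, by comparing dimensions of basic algebras. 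So your plan is salvageable, but only after some version of the tensor-algebra identification replaces the asserted ``no relations'' step.
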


\begin{proof}
See Propositions 2.9, 2.10 and Theorem 5.3 in \cite{Li1}. The category $\hat{\mathcal{C}}$ in (1) is called the \textit{free EI cover} of $\mathcal{C}$.
\end{proof}

From now on we let $\mathcal{C}$ be a finite free EI category with three objects. Then $\mathcal{C}$ has one of the following two descriptions.

\begin{equation*}
\xymatrix{ \mathcal{C}: & x \ar@(ld,lu)[]|{G} \ar @<1ex>[r] ^{H{\alpha}G } \ar@<-1ex>[r] ^{\ldots} & y \ar@(dr,dl)[]|{H} \ar @<1ex>[r] ^{L{\beta}H} \ar@<-1ex>[r] ^{\ldots} & z \ar@(rd,ru)[]|{L} & &
x \ar@(ld,lu)[]|{G} \ar @<1ex>[r] ^{H{\alpha}G } \ar@<-1ex>[r] ^{\ldots} & y \ar@(dr,dl)[]|{H} & z \ar@(rd,ru)[]|{L} \ar @<-1ex>[l] _{H{\beta}L} \ar@<1ex>[l] _{\ldots}}.
\end{equation*}
For the first category, $\mathcal{C} (x, z) = L \beta H \times_H H\alpha G$, where $\times_H$ is the biset product defined in \cite{Webb1}. Let $H_1 = \Stab_H (\alpha G)$, $H_2 = \Stab_H (L\beta)$ (or $H_2 = \Stab_H (\beta L)$ for the second one).

We need the following technical lemma to prove the main result of this subsection.

\begin{lemma}
Let $G$ be a finite group with cyclic Sylow $p$-subgroups. Let $G_1$ and $G_2$ be two proper subgroups. Suppose that $k \uparrow _{G_1}^G$ and $k \uparrow _{G_2}^G$ has no common composition factors except $k$. If $|G_1 \backslash G / G_2 | \geqslant 2$, then $P_k$ has only composition factors isomorphic to $k$, and the Scott modules of both permutation modules are quotient modules of $P_k$ and are not isomorphic to $k$.
\end{lemma}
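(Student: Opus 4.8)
The plan is to turn the double-coset hypothesis into a count of homomorphisms, use the "no common composition factor'' hypothesis to pin down the image of every such homomorphism, and then read off the structure of the principal block from the existence of at least two independent homomorphisms. Throughout I would write $B_0=B_0(kG)$ for the principal block, $\epsilon$ for the idempotent attached to the edge $k$ of its Brauer graph $\Gamma$, and $A=B_0/B_0(1-\epsilon)B_0$ for the local algebra whose modules are exactly the $kG$-modules all of whose composition factors are isomorphic to $k$. First I would record the standard identity $\dim_k\Hom_{kG}(k\uparrow_{G_1}^G,k\uparrow_{G_2}^G)=|G_1\backslash G/G_2|$ (Mackey plus Frobenius reciprocity, exactly as in the displayed computation accompanying Proposition 7.7), so that the hypothesis reads $\dim_k\Hom_{kG}(k\uparrow_{G_1}^G,k\uparrow_{G_2}^G)\geqslant 2$.

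Next I would observe that the image of any $\varphi\colon k\uparrow_{G_1}^G\to k\uparrow_{G_2}^G$ is simultaneously a quotient of the source and a submodule of the target, hence its composition factors lie among the common composition factors and so are all isomorphic to $k$. Consequently every $\varphi$ factors as the largest all-$k$ quotient $\bar W_1$ of $k\uparrow_{G_1}^G$ followed by the inclusion of the largest all-$k$ submodule $W_2$ of $k\uparrow_{G_2}^G$; since $\bar W_1$ is a quotient of the Scott module of $k\uparrow_{G_1}^G$ and $W_2$ a submodule of the Scott module of $k\uparrow_{G_2}^G$, this yields a linear isomorphism $\Hom_{kG}(k\uparrow_{G_1}^G,k\uparrow_{G_2}^G)\cong\Hom_A(\bar W_1,W_2)$.

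The decisive step uses that $G$ has cyclic Sylow $p$-subgroups, so $B_0$ is a Brauer tree algebra and $A$ is a uniserial local algebra, with $A=k$ precisely when $\mathrm{Ext}^1_{kG}(k,k)=0$ (the loop multiplicity at any vertex of a Brauer tree is at most one, and $\dim_k \rad(A)/\rad^2(A)=\dim_k\mathrm{Ext}^1_{kG}(k,k)$). Here I would invoke the standard computation $\mathrm{Ext}^1_{kG}(k,k)\cong H^1(G,k)\cong H^1(D,k)^{N_G(D)/C_G(D)}$, where $D$ is a Sylow $p$-subgroup: the inertial quotient $N_G(D)/C_G(D)$, of order equal to the number $e$ of edges of $\Gamma$, acts faithfully on cyclic $D$, hence through a nontrivial character on the one-dimensional space $H^1(D,k)\cong\mathrm{Hom}(D/\Phi(D),\mathbb{F}_p)$ as soon as $e\geqslant 2$, so that $\mathrm{Ext}^1_{kG}(k,k)\neq 0$ if and only if $e=1$. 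If $e\geqslant 2$ then $A=k$, forcing $\bar W_1\cong k\cong W_2$ and $\dim_k\Hom\leqslant 1$, contradicting the previous paragraph. Therefore $e=1$: the edge $k$ is the whole tree, $B_0$ is Morita equivalent to $k[t]/(t^{|D|})$, and $P_k$ is uniserial with every composition factor isomorphic to $k$, which is the first assertion. I expect this equivalence $\mathrm{Ext}^1_{kG}(k,k)\neq 0\Leftrightarrow e=1$ to be the only genuine obstacle; everything else is formal once it is in hand.

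Finally, with $e=1$ the principal block is serial, so each Scott module lies in $B_0$, is uniserial with simple top $k$, and is therefore a quotient of $P_k$, giving the second assertion. For the last assertion, $\Hom_A(\bar W_1,W_2)$ has dimension $\min\{\ell(\bar W_1),\ell(W_2)\}$ (homomorphisms between the uniserial $A$-modules $k[t]/(t^{\ell})$), so $\dim_k\Hom\geqslant 2$ forces both lengths to be at least $2$; as $\bar W_1$ is a quotient and $W_2$ a submodule of the respective Scott modules, neither Scott module can be isomorphic to $k$. (Equivalently, $M_i\cong k$ would already force $\bar W_1\cong k$ or $W_2\cong k$, whence $\dim_k\Hom\leqslant 1$.)
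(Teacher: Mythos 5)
Your proof is correct, and its core runs along a genuinely different line from the paper's. Both arguments open identically: $|G_1\backslash G/G_2|$ equals $\dim_k \Hom_{kG}(k\uparrow_{G_1}^G, k\uparrow_{G_2}^G)$ by Mackey and Frobenius reciprocity, and the image of any homomorphism between the two permutation modules has all composition factors isomorphic to $k$. From there the paper stays inside the module category: it decomposes both permutation modules into indecomposables, shows every Hom space not involving the Scott summand $M_1$ vanishes (non-Scott summands have no $k$ in their tops), and then, assuming $P_k$ had a composition factor $S \ncong k$, uses the string structure of $M_1$ over the Brauer tree algebra to force $\dim_k\Hom = 1$, contradicting the double coset count. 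You instead funnel the entire Hom space through $\Hom(\bar W_1, W_2)$ between the extremal all-$k$ quotient and submodule, and decide everything by the cohomological dichotomy $\operatorname{Ext}^1_{kG}(k,k)\neq 0 \Leftrightarrow e = 1$, proved via $H^1(G,k)\cong H^1(D,k)^{N_G(D)/C_G(D)}$ (Swan's theorem, $D$ abelian) and the faithful action of the inertial quotient on the one-dimensional $H^1(D,k)$; your input $e = |N_G(D):C_G(D)|$ for the principal block is correct, since it equals $|N_G(D_1):C_G(D_1)|$ for $D_1 \leqslant D$ of order $p$ by a Frattini argument. What each buys: your route trades the paper's delicate analysis of maps out of string modules — and in particular sidesteps the paper's unargued passing claim that $P_k$ is uniserial before $e=1$ is known, Brauer-tree projectives being biserial in general — for two standard but external facts from block theory and group cohomology, whereas the paper's argument is self-contained in Brauer-tree combinatorics. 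Two small elisions in your write-up, both harmless: when $e \geqslant 2$ gives $A = k$, concluding $\bar W_1 \cong k \cong W_2$ additionally needs that $k$ occurs exactly once in $\Top(k\uparrow_{G_1}^G)$ and in $\Soc(k\uparrow_{G_2}^G)$ (one line of Frobenius reciprocity — the same multiplicity-one fact that makes Scott modules unique, and which you already use implicitly when asserting $\bar W_1$ is a quotient of the Scott module); and the degenerate case $D = 1$ is subsumed, since there $\operatorname{Ext}^1_{kG}(k,k)=0$ outright and your contradiction already applies.
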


\begin{proof}
Observe that
\begin{align*}
\dim_k \Hom_{kG} (k \uparrow _{G_1}^G, k \uparrow _{G_2}^G) & = \dim_k \Hom_{kG_1} (k, k \uparrow _{G_2}^G \downarrow _{G_1}^G)\\
& = \dim_k \Hom_{kG_1} (k, \bigoplus _{s \in G_1 \backslash G / G_2} (s \otimes k) \uparrow ^{G_1} _{G_1 \cap  sG_2 s^{-1}})
\end{align*}
which is exactly the number of double cosets in $G_1 \backslash G /G_2$.

Decompose $k \uparrow _{G_1}^G = M_1 \oplus \ldots \oplus M_m$ and $k \uparrow _{G_2}^G = N_1 \oplus \ldots \oplus N_m$ into indecomposable summands, and suppose that $M_1$ and $N_1$ are the Scott modules. By the given condition, $P_k$ is a uniserial module. Suppose that $P_k$ has a composition factor $S \ncong k$. We want to get a contradiction. Note that by this assumption, $\Top (P_k/k) \ncong k$.

Since $k \uparrow _{G_1}^G$ and $k \uparrow _{G_2}^G$ have no other common composition factors except $k$, for $2 \leqslant i \leqslant m$ and $1 \leqslant j \leqslant n$, $\Hom_{kH} (M_i, N_j) = 0$ because the simple summands of $\Top (M_i)$ are not isomorphic to $k$, so are not composition factors (up to isomorphism) of $N_j$. Therefore,
\begin{equation*}
\Hom_{kG} (k \uparrow _{G_1}^G, k \uparrow _{G_2}^G) = \bigoplus _{i=1}^m \bigoplus _{j=1}^n \Hom_{kG} (M_i, N_j) \cong \bigoplus _{j=1}^n \Hom_{kG} (M_1, N_j).
\end{equation*}
Note that $M_1$ is a string module, and $\Top (P_k /k) \ncong k$. Therefore, for any $0 \neq \varphi \in \Hom_{kG} (M_1, N_j)$, if the image of $\varphi$ is not contained in $\Soc(N_j)$, we can get a common composition factor $S \ncong k$ of $M_1$ and $N_j$. This is not allowed by our assumption. Moreover, the image of $\varphi$ must be isomorphic to $k$ by the same reason. Consequently, we get $\Hom_{kG} (M_1, N_j) = 0$ for $2 \leqslant j \leqslant n$, and $\Hom_{kG} (M_1, N_1)$ is spanned by the morphism sending the composition factor $k$ in $\Top (M_1)$ onto the composition factor $k$ in $\Soc(N_1)$. So $\Hom_{kG} (k \uparrow _{G_1}^G, k \uparrow _{G_2}^G)$ is one-dimensional, contradicting the given condition. This contradiction tells us that $P_k$ only has composition factors isomorphic to $k$, and the principal block $B_0 (kG)$ has only one simple module $k$ lying in it. Thus both Scott modules are quotient modules of $P_k$. Moreover, since
\begin{equation*}
\dim_k \Hom_{kG} (M_1, N_1) = \dim_k \Hom_{kG} (k \uparrow _{G_1}^G , k \uparrow _{G_2}^G) = |G_1 \backslash G / G_2| \geqslant 2,
\end{equation*}
we conclude that neither of them is isomorphic to $k$.
\end{proof}

Now we can prove:

\begin{proposition}
Let $\mathcal{C}$ be one of the above two categories, and suppose that its category algebra is of finite representation type. If $\Char(k) = p \neq 2, 3$, then $H$ acts transitively on either $\mathcal{C} (x, y)$ or $\mathcal{D} (y, z)$ (or $\mathcal{C} (z, y)$ for the second one). Moreover, $| H_1 \backslash H /H_2| = 1$.
\end{proposition}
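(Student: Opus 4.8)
The plan is to treat the two assertions separately after one common reduction. First I would record that the two-object full subcategories $\{x,y\}$ and $\{y,z\}$ (resp. $\{z,y\}$) are of finite representation type by Proposition 3.4, so condition (2) of Theorem 1.1 applies to each: for the pair $\{x,y\}$ either $G$ or $H$ acts transitively, and similarly for the other pair. If $H$ already acts transitively on one of the two bisets then assertion (1) holds, so the only configuration I must exclude is the one where $H$ acts transitively on neither. In that ``bad case'' the other group acts transitively on each biset; translating through the stabilizer dictionary of Section 6 (namely $H$ transitive on $\mathcal{D}(x,y)\iff G_1=G$, and $G$ transitive $\iff H_1=H$) this forces $H_1=\Stab_H(\alpha G)=H$ and $H_2=H$, while $G_1\neq G$ and the corresponding stabilizer on the $z$-side is proper.

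For assertion (2) I would argue directly and uniformly, without using the transitivity hypothesis. Form the quotient category $\mathcal{E}$ of $\mathcal{D}$ obtained by collapsing $G$ and $L$, i.e. setting $\mathcal{E}(x,x)=\mathcal{E}(z,z)=1$; this is a quotient category, so by Proposition 3.5 it is again of finite representation type. One checks $\mathcal{E}(x,y)=H\bar\alpha$ with $\Stab_H(\bar\alpha)=H_1$ and $\mathcal{E}(y,z)=\bar\beta H$ with $\Stab_H(\bar\beta)=H_2$ (first category; the second is symmetric), so the distinct composites $\bar\beta h\bar\alpha$ are indexed by the double cosets, giving $|\mathcal{E}(x,z)|=|H_2\backslash H/H_1|$. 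Since $\mathcal{E}(x,z)$ is a biset for the trivial groups $\mathcal{E}(z,z)=\mathcal{E}(x,x)=1$ its orbits are singletons, and finite representation type forces at most one orbit by Proposition 3.7(3); hence $|H_2\backslash H/H_1|\le 1$, and as $g\mapsto g^{-1}$ gives a bijection $H_2\backslash H/H_1\to H_1\backslash H/H_2$ we conclude $|H_1\backslash H/H_2|=1$.

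For assertion (1) I must show the bad case produces infinite representation type. Using that $G$ acts transitively on $\mathcal{D}(x,y)=\alpha G$ and $H_1=H$, an orbit count (each $\alpha g$ has $H$-stabilizer $H_0$, and $\alpha g\sim\alpha g'$ iff $G_1 g=G_1 g'$) shows that as a left $kH$-module $k\mathcal{D}(x,y)\cong(k\uparrow_{H_0}^H)^{\oplus|G:G_1|}$ with $|G:G_1|\ge 2$; symmetrically the outgoing (resp. the second incoming) biset contributes $\ge 2$ copies of a permutation module $k\uparrow_{H_0'}^H$. Thus the central module $R(y)$ carries two independent pencils of $\ge 2$ parallel maps, and I would exploit this to build a one-parameter family $\{R_d\}_{0\neq d\in k}$ of pairwise non-isomorphic indecomposable representations, in the style of Propositions 6.9--6.12 (the $\begin{bmatrix}1&1\\1&d\end{bmatrix}$ trick), verifying well-definedness via Proposition 6.1 and indecomposability and non-isomorphism through the endomorphism algebra. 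As a guiding special case, when $|G|,|H|,|L|$ are invertible the ordinary quiver $\tilde Q$ of Section 4 acquires four arrows incident to the vertex $k_H$, i.e. contains the Euclidean diagram $\tilde D_4$, so $\mathcal{C}$ is of infinite type by Corollary 4.5; the general construction is the non-semisimple analogue of this phenomenon.

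The main obstacle I anticipate is exactly this last construction: making the family uniform across all $p\neq 2,3$ and across both shapes of $\mathcal{D}$, and certifying that its members are indecomposable and pairwise non-isomorphic. This hinges on understanding the $\Hom$-spaces between the relevant induced modules $k\uparrow_{H_0}^H$ and $k\uparrow_{H_0'}^H$, in particular which composition factors they share, and here Lemma 8.3 is the decisive tool: it controls the common composition factors and the Scott-module structure so that the matrix parameter $d$ genuinely separates isomorphism classes.
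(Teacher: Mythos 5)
Your quotient-category argument for the double-coset statement is correct, but only for the \emph{first} (linear) category, and your remark that ``the second is symmetric'' is the central gap in the proposal. In the shape $x \rightarrow y \leftarrow z$ both unfactorizable bisets end at $y$, so $\mathcal{D}(x,z) = \emptyset$: there are no composites $\bar{\beta} h \bar{\alpha}$ to count, and no combinatorial avatar of $H_1 \backslash H / H_2$ inside the category. For the linear category your route works and is essentially the paper's Case I repackaged: the paper uses the unique factorization property in a direct diagram chase to show $H = H_2 H_1$, whereas you use the UFP implicitly (it is exactly what makes your functor $\mathcal{D} \rightarrow \mathcal{E}$ well defined and makes distinct double cosets give distinct elements of $\mathcal{E}(x,z)$) and then quote Proposition 3.7(3); that is a clean equivalent. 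But for the second category the paper must work representation-theoretically (its Cases II and III): if $k \uparrow_{H_1}^H$ and $k \uparrow_{H_2}^H$ share a composition factor $S \ncong k$, it builds infinite families from indecomposable representations of a four-vertex Euclidean quiver with $P_S$ placed at $y$; if they share only $k$, it invokes Lemma 8.2 (which you cite as ``Lemma 8.3'') to force $P_k$ to have only composition factors isomorphic to $k$ with both Scott modules of dimension at least $2$, and then shows that $(1_x + e + 1_z)\,k\mathcal{E}\,(1_x + e + 1_z)$ is the bound quiver algebra $\bullet \rightarrow \bullet \leftarrow \bullet$ with a loop $\delta$ at the middle vertex and relations $\delta^d = \delta^s \gamma = \delta^t \mu = 0$, whose covering is not locally representation-finite. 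None of this appears in your proposal.

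There is a second gap, in part (1). Your reduction to the bad case is right ($G$ transitive on $\mathcal{D}(x,y)$, $L$ on the other biset, with $G_1$ and $L_1$ proper), and your $\tilde{D}_4$ heuristic from the semisimple case is exactly the paper's picture. But the ``two independent pencils of parallel maps'' mechanism is off: although $k\mathcal{D}(x,y) \cong (k\uparrow_{H_0}^H)^{\oplus |G:G_1|}$ as left $kH$-modules, the maps $R(\alpha g) = R(\alpha)R(g)$ are all determined by $R(\alpha)$ and the $G$-action on $R(x)$ --- if $R(x)$ is the trivial module the whole pencil collapses to one map, so orbit multiplicity alone produces nothing. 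The two independent arrows into $y$ must come from two \emph{non-isomorphic} $kG$-modules at $x$: the paper takes $k$ together with a projective $P_S$, $S \ncong k_G$, such that $\Top(P_S \downarrow_{G_1}^G)$ contains $k$ (equivalently, $k\uparrow_{G_1}^G$ has a composition factor $\ncong k$), dually $P_T$ at $z$, and then transports indecomposable representations of the five-vertex Euclidean quiver $\tilde{D}_4$ into $\mathcal{D}$-representations. Producing $P_S$ is the crux: the paper rules out $k\uparrow_{G_1}^G$ having only trivial composition factors by combining Proposition 6.13 (so $|G_1|$ is invertible and $k\uparrow_{G_1}^G \cong P_k$) with $\dim_k \End_{kG}(P_k) = \dim_k P_k \geqslant p \geqslant 5$ and Proposition 7.7. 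The Scott-module Lemma 8.2 that you name as the decisive tool plays no role here --- it belongs to Case III of the double-coset statement --- so your plan, as written, has no substitute for this existence step.
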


\begin{proof}
We only prove the first statement for the first category since the proof also works for the second one with a very small modification. Let $G_1 = \Stab_G (H\alpha)$ and $L_1 = \Stab_L (\beta H)$. Suppose that $H$ does not act transitively on either $\mathcal{C} (x, y)$ or $\mathcal{C} (y, z)$. By considering the full subcategory $\mathcal{E}$ with objects $x$ and $y$ we conclude that $G$ acts transitively on $\mathcal{C} (x, y)$. Similarly, $L$ acts transitively on $\mathcal{C} (y, z)$. Therefore, $G_1$ and $L_1$ are proper subgroups of $G$ and $L$ respectively. We claim that there is an indecomposable projective $kG$-module $P_S$ such that $S \ncong k_G$ and $\Top (P_S \downarrow _{G_1}^G)$ has a simple summand isomorphic to $k$. Dually, there is an indecomposable projective $kL$-module $P_T$ such that $T \ncong k_L$ and $\Soc (P_T \downarrow _{L_1}^L)$ has a simple summand isomorphic to $k$.

We prove the first claim since the second one is a dual statement. By Lemma 3.14, it is equivalent to saying that $k \uparrow _{G_1}^G$ has a composition factor $S \ncong k_G$, which is obviously true for $p = 0$. Thus we let $p \geqslant 5$. If $k \uparrow _{G_1}^G$ has only composition factors isomorphic to $k$, then it is indecomposable. By Proposition 3.12, either $|G_1|$ is invertible or $G_1$ contains a Sylow $p$-subgroup of $G$. The second case cannot happen. Otherwise $k \uparrow _{G_1}^G \cong k$, which is absurd since $G_1$ is a proper subgroup of $G$. Therefore, $|G_1|$ must be invertible in $k$ and $k \uparrow _{G_1}^G \cong P_k$. Moreover, $G$ has a nontrivial Sylow $p$-subgroup since $\dim_k P_k = |G: G_1| > 1$. Consequently, $\dim_k \End _{kG} (P_k) = \dim_k P_k \geqslant p \geqslant 5$, so $k\mathcal{E}$ is of infinite representation type by Proposition 3.17. This is not allowed, and the claim is proved.

For an indecomposable representation $R$ of the following quiver with dimension vector $(d_1, d_2, d_3, d_4, d_5)$,
\begin{equation*}
\xymatrix{ & 1 \ar[d] & & & k^{d_1} \ar[d] ^{\varphi_1} & & \\
2 \ar[r] & 5 \ar[r] \ar[d] & 4 \quad \ar@{=>}[r]^R & \quad k^{d_2} \ar[r] ^{\varphi_2} & k^{d_5} \ar[r] ^{\varphi_4} \ar[d] ^{\varphi_3} & k^{d_4} & \\
 & 3 & & & k^{d_3} & }
\end{equation*}
we can construct an indecomposable corresponding representation $\tilde{R}$ of $\mathcal{C}$ as follows: $\tilde{R} (x) \cong k^{d_1} \oplus P_S^{d_2}$, $\tilde{R} _(y) \cong k^{d_5}$, $\tilde{R} (z) \cong k^{d_3} \oplus P_T^{d_4}$. Linear maps $\tilde{R} (\alpha)$ and $\tilde{R} (\beta)$ can be defined in a way as we did in the proofs of Propositions 3.9 and 3.12. Consequently, $k\mathcal{C}$ is of infinite representation type. This contradiction shows that $H$ must act transitively on at least one biset, proving the first statement.\\

Now we turn to the proof of the second statement. There are three cases:\\

\textbf{Case I:} $\mathcal{C}$ is the first category. We consider the full subcategory $\mathcal{E}$ with objects $x$ and $z$. Note that $\mathcal{E} (x, z) = L \beta H \alpha G$. Since $k\mathcal{E}$ is of finite representation type as well, either $L$ or $G$ should act transitively on $\mathcal{E} (x, z)$. Without loss of generality suppose that $L$ acts transitively, i.e., $L \beta H \alpha G = L \beta \alpha$.

Take an arbitrary $h \in H$ and consider $\beta h \alpha$. Since $\beta h \alpha \in \mathcal{E} (x, y) = L \beta \alpha$, we can find some $l \in L$ such that $\beta h \alpha = l \beta \alpha$. Consequently, this non-isomorphism in $\mathcal{C} (x, z)$ has two decompositions into unfactorizable morphisms. By the unique factorization property, there should exist $h_1 \in H$ such that the following diagram commutes:
\begin{equation*}
\xymatrix {x \ar[r] ^{\alpha} \ar[d]^1 & y \ar[r]^{l\beta} \ar[d]^{h_1} & z \ar[d]^1\\
x \ar[r]^{h\alpha} & y \ar[r]^{\beta} & z}.
\end{equation*}

Consequently, $l \beta = \beta h_1$ and $h_1 \alpha = h \alpha$. So $h_1 \in H_2$ and $h^{-1} h_1 \in H_1$. Therefore, $h \in H_2 H_1$, and $|H_1 \backslash H /H_2| = 1$.\\

In the next two cases we consider the second category. Suppose that $|H_1 \backslash H /H_2| \geqslant 2$ (so $H_1 \neq H \neq H_2$), we want to get a contradiction.\\

\textbf{Case II:} $k \uparrow _{H_1}^H$ and $k \uparrow _{H_2}^H$ has a common composition factor $S \ncong k$. Then the projective $kH$-module $P_S \ncong P_k$ satisfies that $\Top (P_S \downarrow _{H_1}^H)$ and $\Soc (P_S \downarrow _{H_2}^H)$ have a summand isomorphic to $k_{H_1}$ and $k_{H_2}$ respectively. For an indecomposable representation $R$ of the following quiver with dimension vector $(d_1, d_2, d_3, d_4)$,
\begin{equation*}
\xymatrix{ & 2 & & & k^{d_2} & & \\
1 \ar[ur] \ar[dr] & & 4  \ar[ul] \ar[dl] \quad \ar@{=>}[r]^R & \quad k^{d_1} \ar[ur] ^{\varphi_1} \ar[dr] ^{\varphi_3} & & k^{d_4} \ar[ul] ^{\varphi_4}  \ar[dl] ^{\varphi_3} & \\
 & 3 & & & k^{d_3} & }
\end{equation*}
we can construct a corresponding representation $\tilde{R}$ of $\mathcal{C}$ as follows: $\tilde{R} (x) \cong k^{d_1}$, $\tilde{R} _(y) \cong k^{d_2} \oplus P_S^{d_3}$, $\tilde{R} (z) \cong k^{d_4}$. Linear maps $\tilde{R} (\alpha)$ and $\tilde{R} (\beta)$ can be defined in a way as we did before. We deduce that $k\mathcal{C}$ is of infinite representation type, contradicting the given condition.\\

\textbf{Case III:} $k \uparrow _{H_1}^H$ and $k \uparrow _{H_2}^H$ has no common composition factors except $k$. By the previous lemma, the projective $kH$-module $P_k$ and the Scott modules of both $k \uparrow _{H_1}^H$ and $k \uparrow _{H_2}^H$ only have composition factors isomorphic to $k$. Moreover, their dimensions are all bigger than 1.

Consider the following category $\mathcal{E}$:
\begin{equation*}
\xymatrix{ \mathcal{E}: & x \ar@(ld,lu)[]|{1} \ar @<1ex>[r] ^{H \bar{\alpha}} \ar@<-1ex>[r] ^{\ldots} & y \ar@(dr,dl)[]|{H} & z \ar@(rd,ru)[]|{1} \ar @<-1ex>[l] _{H\bar{\beta}} \ar@<1ex>[l] _{\ldots}},
\end{equation*}
where $\Stab_{H} (\bar{\alpha}) = H_1$ and $\Stab_{H} (\bar{\beta}) = H_2$. The reader can check that $\mathcal{E}$ is a quotient category of $\mathcal{C}$. So it suffices to prove the infinite representation type of $k\mathcal{E}$.

Let $e$ be a primitive idempotent in $kH$ such that $kHe \cong P_k$. Consider the algebra
\begin{equation*}
\Lambda = \End _{k\mathcal{E}} (k\mathcal{E} 1_x \oplus k\mathcal{E} e \oplus k\mathcal{E} 1_z) ^{\textnormal{op}} \cong (1_x + e + 1_z) k\mathcal{E} (1_x + e + 1_z).
\end{equation*}
It is easy to check that $1_x k\mathcal{E} 1_x \cong k \cong 1_z k\mathcal{E} 1_z$, $e k\mathcal{E} e \cong k[t] / (t^d)$ where $d = \dim_k P_k \geqslant 2$, and
\begin{equation*}
\dim_k e k\mathcal{E} 1_x = \dim_k e kH \alpha = \dim_k \Hom_{kH} (P_k, k \uparrow _{H_1}^H) = s \geqslant 2,
\end{equation*}
where $s$ is the dimension of the Scott module of $k\uparrow _{H_1}^H$. Similarly, $\dim_k e k\mathcal{E} 1_z = t \geqslant 2$, where $t$ is the dimension of the Scott module of $k\uparrow _{H_2}^H$. Therefore, $\Lambda$ is isomorphic to the path algebra of the following bounded quiver
\begin{equation*}
\xymatrix{ \bullet \ar[r]^{\gamma} & \bullet \ar@(dr,dl)[]|{\delta} & \bullet \ar[l]_{\mu}}
\end{equation*}
with relations $\delta^d = \delta^s \gamma = \delta^t \mu = 0$. Its covering is not locally representation-finite since it contains the following subquiver:
\begin{equation*}
\xymatrix{ \bullet \ar[r] & \bullet \ar[d] & \bullet \ar[l] \\ \bullet \ar[r] & \bullet & \bullet \ar[l]}
\end{equation*}
Therefore, $\Lambda$ and hence $k\mathcal{E}$ are of infinite representation type, contradicting the given condition.

\end{proof}

\end{document}